\newtheorem{Thm}{Theorem}[section]
\newtheorem{Lem}[Thm]{Lemma}
\newtheorem{Prop}[Thm]{Proposition}
\newtheorem{Cor}[Thm]{Corollary}
\theoremstyle{definition}
\newtheorem{Def}[Thm]{Definition}
\newtheorem{Rem}[Thm]{Remark}
\newcommand{\R}{\mathbb{R}}
\newcommand{\E}{\mathcal{E}}
\newcommand{\F}{\mathcal{F}}
\newcommand{\M}{\mathcal{M}}
\newcommand{\V}{V_{\epsilon}(t)}
\newcommand{\ca}{\textup{Cap}}
\begin{document}

\title[Wiener sausage on Dirichlet space]{Long time behavior of the volume of the Wiener sausage on Dirichlet spaces} 
\author{Kazuki Okamura} 
\address{School of General Education, Shinshu University, 3-1-1, Asahi, Matsumoto, Nagano, 390-8621, JAPAN.} 
\email{kazukio@shinshu-u.ac.jp} 
\subjclass[2010]{60J60, 31C25}
\maketitle

\begin{abstract}
In the present paper, we consider long time behaviors of the volume of the Wiener sausage on Dirichlet spaces. 
We focus on the volume of the Wiener sausage for diffusion processes on metric measure spaces other than the Euclid space equipped with the Lebesgue measure. 
We obtain the growth rate of the expectations and almost sure behaviors of the volumes of the Wiener sausages on metric measure Dirichlet spaces satisfying Ahlfors regularity and sub-Gaussian heat kernel estimates. 
We show that the growth rate of the expectations on a bounded modification of the Euclidian space is identical with the one on the Euclidian space equipped with the Lebesgue measure.  
We give an example of a metric measure Dirichlet space on which a scaled of the means fluctuates.
\end{abstract}

\section{Introduction and Main results}

The Wiener sausage is the range of a ball whose center moves along the trajectory of a Markov process. 
This is a simple example of a non-Markov functional of the Markov process. 
It is also related with analysis and mathematical physics, specifically, the study of heat conduction, spectral properties of random Schr\"odinger operators, and Bose-Einstein condensation. 
The first result concerning the volume of the Wiener sausage is by Spitzer \cite{Sp64}. 
He considered the long time behavior of the volume  of the Wiener sausage on Euclid spaces and his results are strongly related to heat conduction. 
The Laplace transform of the volume of the Wiener sausage is related to spectral properties of random Schr\"odinger operators. 
Donsker-Varadhan \cite{DV75} considered the Laplace transform of the volume of the Wiener sausage and answered a question by Kac-Luttinger \cite{KL74} concerning the Bose-Einstein condensation. 

In the present paper, we consider the long time behavior of the volume of the Wiener sausage on metric measure Dirichlet spaces satisfying (sub-)Gausian heat kernel estimates. 
There have been many results for the standard Brownian motion on the Euclid spaces. 
Here we focus on the volume of the Wiener sausage for diffusion processes on metric measure spaces other than the Euclid space equipped with the Lebesgue measure. 

We review several known results for diffusion processes on metric measure spaces other than the Euclid space equipped with the Lebesgue measure.     
 Chavel-Feldman \cite{CF86-1, CF86-2, CF86-3} considered the volume of the Wiener sausage for Brownian motion on Riemannian manifolds.   
\cite{CF86-1} shows radial asymptotic results (i.e. radius $\epsilon \to 0$) on hyperbolic 3-spaces, and a time asymptotic result (i.e. time $t \to \infty$) on Riemannian symmetric spaces of non-positive curvature. 
\cite{CF86-2} shows radial asymptotic results  on complete Riemannian manifolds for the dimension $d \ge 3$. 
\cite{CF86-3} shows radial asymptotic results for the Wiener sausage of reflected Brownian motion on a domain in $\R^d, d \ge 2$.  
Sznitman \cite{Sz89} obtained a time asymptotic result of negative exponentials of Brownian bridge on hyperbolic space which is similar to the result by \cite{DV75}. 
Furthermore, in \cite{Sz90} he obtains a Donsker-Varadhan type result for Brownian motion on nilpotent Lie groups.
Chavel-Feldman-Rosen \cite{CFR91} obtained a second order radial asymptotic result for $2$-dimensional Riemannian manifold, extending Le Gall's expansion \cite[Theorem 2.1]{Le88} in $\R^2$.
Gibson-Pivarski \cite{GP15} obtained a time asymptotic result similar to \cite{DV75} for diffusions on local Dirichlet spaces satisfying a sub-Gaussian heat kernel estimate. 
Bass-Kumagai \cite{BK00} showed the law of the iterated logarithms (LILs) of the ranges of a class of symmetric diffusion 
processes on metric measure spaces. 
Recently, Kim-Kumagai-Wang \cite{KKW17} showed the LILs of the ranges of a class of symmetric jump processes on metric measure spaces. 
Here the range is the volume of the trace of a sample path up to a fixed time. 
The results of \cite{BK00, KKW17, GP15} are based on heat kernel estimates. 

Our results are time asymptotics for the volume of the Wiener sausage on metric measure Dirichlet spaces. 
First, we obtain growth rate of the means and almost sure behaviors on certain classes of metric measure Dirichlet spaces. 
Our classes contain metric measure Dirichlet spaces satisfying Ahlfors regularity and sub-Gaussian heat kernel estimates. 
Second, we show that the growth rate of the expectations on a modification of the Euclidian space equipped with the Lebesgue measure is identical with those of the Euclidian space equipped with the Lebesgue measure.   
Third, we give an example of a metric measure Dirichlet space on which a scaled sequence of the expectations fluctuates.  

For the Brownian motion on the Euclid spaces, by using the Brownian scaling, time asymptotic results can be derived from radial asymptotic results.  
However, in this case, we cannot use the spatial homogeneity and the scaling of the Euclid space and the Brownian motion, which are used to show the time asymptotic results in \cite{Sp64} and \cite{CF86-1}. 
Our proofs are based on two estimates for the probability for the hitting time to an open ball appearing in \cite[(10) and (11)]{CF86-2}, which are stated in Lemma \ref{lem-fund} below.  

The range of random walk is the discrete object corresponding to the volume of the Wiener sausage. 
There are differences between the ranges of random walks on graphs and the volumes of the Wiener sausages of diffusions on metric measure Dirichlet spaces. 
The results in the present paper correspond to those for the range of a simple random walk on an infinite connected simple graph, which were obtained by the author \cite{O14, Opre}.  
However, our proofs are different from those of \cite{O14, Opre}.  
The last exit decomposition in \cite{O14, Opre}, whose papers deal with the discrete case, is not  applicable to this framework at least in direct manners.   
For each $n$, the range of random walk up to time $n$ is always smaller than or equal to $n+1$, however, $\V$ is unbounded for each $t > 0$.

\subsection{Framework and main results}

Let $(M, d)$ be a non-compact connected complete separable metric space such that every open ball is relatively compact. 
Let $\mu$ be a Borel measure on $M$ such that for every relatively compact open subset $U$ of $M$, $0 < \mu(U) \le \mu(\overline{U}) < +\infty$. 
Here $\overline{U}$ is the closure of $U$. 
Let $B(x,r) := \{y \in M : d(x,y) < r\}$, $\overline{B}(x,r) := \{y \in M : d(x,y) \le r\}$ and $\overline{\partial}B(x,r) := \{y \in M : d(x,y) = r\}$. 
We assume that 
\begin{equation}\label{non-emp-bdry}
\overline{\partial}B(x,r) \ne \emptyset,  \textup{ for every $x \in M$ and every $r \ge 0$.} 
\end{equation} 
and furthermore 
\begin{equation}\label{meas-vanish-bdry}
\mu(\overline{\partial}B(x,r)) = 0, \textup{ for every $x \in M$ and every $r \ge 0$.} 
\end{equation} 
 
We follow Fukushima-Oshima-Takeda's book \cite{FOT11} for terminologies of Dirichlet forms. 
Let $(\E, \F)$ be a strongly-local regular symmetric {\it conservative} 
Dirichlet form on $L^2 (M, \mu)$ and $(X_t, P^x)$ be the associated Hunt process.  
Assume that there exists the heat kernel of the semigroup associated with $(\E, \F)$. 
We furthermore assume that $p(t,x,y)$ is jointly-continuous with respect to $(t,x,y)$ and 
\[ E^x [f(X_t)] = \int_M p(t,x,y) \mu(dy), \ \forall x \in M, \ \forall f \in L^2 (M, \mu).\]  
We call a quintuple  $(M, d, \mu, \E, \F)$ a {\it metric measure Dirichlet space}. 

For a Borel measurable subset $B$ of $M$, 
let $$T_{B} := \inf\{t > 0 : X_t \in B\}.$$ 
Let $\tau_D$ be the first exit time from a Borel measurable subset $D \subset M$, that is, 
$\tau_D := T_{M \setminus D}$.  
Let the volume of the Wiener sausage: 
\[ \V := \mu\left(\bigcup_{s \in [0,t]} B(X_s, \epsilon) \right). \]

We write $f \asymp g$ if there are two constants $c$ and $C$ such that $cg(x) \le f(x) \le Cg(x)$ for every $x$.   

\begin{Thm}[Growth rates for means]\label{thm-fractal}
Let $\epsilon > 0$ and $a \in (0, 1)$.   
Assume that the following two conditions hold: \\
(i) There is a constant $c_0 > 0$ such that   
\begin{equation}\label{ass-vg} 
\sup_{x \in M} \mu(B(x, a\epsilon)) \le c_0 \inf_{x \in M} \mu(B(x, a\epsilon)). 
\end{equation} 
(ii) There are an increasing function $f(t)$ and constants $c_1, c_2 > 0$ such that   
\begin{equation*}
\lim_{t \to \infty} \frac{f(t)}{t} = 0.  
\end{equation*}  
\begin{equation}\label{ass-growth-hk}  
c_1 \le \liminf_{t \to \infty} \frac{1}{f(t)}\int_0^t p(s,x,y) ds \le \limsup_{t \to \infty} \frac{1}{f(t)}\int_0^t p(s,x,y) ds \le  c_2 
\end{equation} 
 holds for  
 every $x, y \in M$ satisfying that $(1-a)\epsilon \le d(x,y) \le (1+a)\epsilon$. 
Then, we have that for every $x \in M$, 
\[ \frac{c_0}{c_2} \le \liminf_{t \to \infty} \frac{E^x [\V]}{t / f(t)} \le \limsup_{t \to \infty} \frac{E^x [\V]}{t  / f(t)} \le \frac{c_0}{c_1}.  \]
\end{Thm}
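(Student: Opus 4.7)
I would begin by expressing the quantity of interest via Fubini. For each $y \in M$, the condition $y \in \bigcup_{s \in [0,t]} B(X_s,\epsilon)$ is equivalent (by symmetry of $d$) to $X_s \in B(y,\epsilon)$ for some $s \in [0,t]$, i.e., to the event $\{T_{B(y,\epsilon)} \le t\}$. Combined with \eqref{non-emp-bdry}--\eqref{meas-vanish-bdry}, this yields
\[ E^x[\V] \;=\; \int_M P^x\bigl(T_{B(y,\epsilon)} \le t\bigr)\,\mu(dy). \]
Writing $G_t(x,y) := \int_0^t p(s,x,y)\,ds$, conservativity of the Dirichlet form translates into $\int_M G_t(x,y)\,\mu(dy) = t$, so after normalising by $f(t)$ the target growth rate $t/f(t)$ is already present on the right-hand side; the task is to convert it into the ratios $c_0/c_1$ and $c_0/c_2$.

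Next, I would apply the Chavel--Feldman-type bounds of Lemma \ref{lem-fund} (coming from \cite[(10),(11)]{CF86-2}). The lower bound, obtained by the strong Markov property at $T_{B(y,\epsilon)}$ and the identity $G_t(x,y) = E^x[\mathbf{1}_{T\le t} G_{t-T}(X_T,y)]$, should take the form
\[ P^x\bigl(T_{B(y,\epsilon)} \le t\bigr) \;\ge\; \frac{G_t(x,y)}{\sup_{z \in \overline{\partial}B(y,\epsilon)} G_t(z,y)}, \]
with a matching upper bound of shape $C\,G_{Ct}(x,y)/\inf_{z \in \overline{\partial}B(y,\epsilon)} G_{t/C}(z,y)$, perhaps after trading pointwise values for $\mu(B(y,a\epsilon))^{-1}\int_{B(y,a\epsilon)}G_t\,d\mu$. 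Since every such $z$ satisfies $d(z,y) = \epsilon \in [(1-a)\epsilon,(1+a)\epsilon]$, hypothesis (ii) applies \emph{directly} to both denominators, pinning $\sup_z G_t(z,y)/f(t)$ and $\inf_z G_t(z,y)/f(t)$ asymptotically between $c_1$ and $c_2$.

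Inserting these sandwich bounds into the Fubini identity and using $\int_M G_{\lambda t}(x,y)\,\mu(dy) = \lambda t$ produces exactly $t/f(t)$ up to the constants from (ii); the volume comparison constant $c_0$ of hypothesis (i) enters when switching between pointwise $G_t(z,y)$ and the averaged $\mu(B(y,a\epsilon))^{-1}\int_{B(y,a\epsilon)} G_t\,d\mu$ in the Chavel--Feldman statement, each such exchange costing a factor at most $c_0$ by \eqref{ass-vg}. Collecting everything, the limsup is bounded by $c_0/c_1$ and the liminf from below by $c_0/c_2$, matching the theorem.

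The main obstacle will be the uniformity in $y$: hypothesis (ii) is stated pointwise, but the Fubini expression requires exchanging $\liminf/\limsup$ with $\int_M\,\mu(dy)$. My plan is to split $M$ into $\{y : d(x,y) \le R\}$ and its complement. On the bounded region, joint continuity of $p(t,x,y)$ together with compactness of $\overline{B}(x,R)$ and hypothesis (i) should upgrade (ii) to a uniform asymptotic; on the unbounded complement, I would dominate the integrand by $G_t(x,y)/[(c_1-\eta)f(t)]$, whose $\mu$-integral over $\{d(x,y)>R\}$ is at most $t/[(c_1-\eta)f(t)]$, and then estimate the error as $R\to\infty$ using the fact that the process is conservative so that the mass of $p(s,x,\cdot)$ far from $x$ is small. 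Reconciling this uniformity with the pointwise nature of (ii) is the delicate step.
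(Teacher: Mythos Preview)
Your skeleton---Fubini identity \eqref{base} followed by the Chavel--Feldman bounds of Lemma~\ref{lem-fund}---is exactly the paper's route, and your identification of where the constants $c_0,c_1,c_2$ enter is correct. However, the step you flag as the ``main obstacle'' is a misdiagnosis, and the near/far splitting you propose is unnecessary.

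The paper never exchanges a limit in $t$ with the integral $\int_M\,\mu(dy)$. Instead it pulls the denominator in the Chavel--Feldman bound out of the $y$-integral \emph{before} taking any limit: for the lower bound, say, one replaces $\sup_{w\in\overline\partial B(y,\epsilon)}\int_0^t P^w(X_s\in B(y,a\epsilon))\,ds$ by the $y$-free quantity $\sup_{w,y:\,d(w,y)=\epsilon}\int_0^t P^w(X_s\in B(y,a\epsilon))\,ds$. The numerator then integrates cleanly by Fubini,
\[
\int_M\!\int_0^t P^x\bigl(X_s\in B(y,a\epsilon)\bigr)\,ds\,\mu(dy)
=\int_0^t E^x\bigl[\mu(B(X_s,a\epsilon))\bigr]\,ds,
\]
which is sandwiched between $t\inf_z\mu(B(z,a\epsilon))$ and $t\sup_z\mu(B(z,a\epsilon))$ by conservativity and assumption~(i). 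This yields a bound on $E^x[\V]$ valid for \emph{every} $t$, of the form
\[
E^x[\V]\;\ge\;\frac{1}{c_0}\,\inf_{w,z:\,(1-a)\epsilon\le d(w,z)\le(1+a)\epsilon}\frac{t}{\int_0^t p(s,w,z)\,ds},
\]
and only \emph{then} does one let $t\to\infty$. So no exchange of limit and integral over $M$ is required; the only uniformity needed is over the fixed annulus $\{(w,z):(1-a)\epsilon\le d(w,z)\le(1+a)\epsilon\}$, which the paper reads directly from~\eqref{ass-growth-hk}. Note also that the Chavel--Feldman bounds in Lemma~\ref{lem-fund} are stated for the \emph{integrated} quantities $P^x(X_s\in B(y,a\epsilon))$, not for pointwise $p(s,x,y)$; this is precisely why the annulus width $(1\pm a)\epsilon$ appears, and why the role of $a$ is not merely to mediate the volume comparison.
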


The constants $c_i, i = 0,1,2$, and the function $f$ depend on $\epsilon > 0$ and $a \in (0,1)$ both. 
However in several cases we can choose $f$ as a function independent from $\epsilon > 0$ and $a \in (0,1)$ both and $\lim_{a \to 0} c_1 (a) = \lim_{a \to 0} c_2(a)$. 
Assume that $M = \R^d$ and $(X_t)_t$ is the standard Brownian motion.  
If $d \ge 3$, then, we can let $f(t) = 1$, $c_0(a) = 1$ and $\lim_{a \to 0} c_1 (a) = \lim_{a \to 0} c_2(a) = G(x,y)$, where $x$ and $y$ are points such that $d(x,y) = \epsilon$.  
Therefore, 
\begin{equation}\label{Sp-d3} 
\lim_{t \to \infty} \frac{E^x [\V]}{t} = \frac{1}{G(x,y)} = \textup{Cap}(B(0,\epsilon)). 
\end{equation} 

If $d = 2$, then, we can let $f(t) = \log t$, $c_0(a) = 1$ and $\lim_{a \to 0} c_1 (a) = \lim_{a \to 0} c_2(a) = 1/(2\pi)$.  
Therefore, 
\begin{equation}\label{Sp-d2}  
\lim_{t \to \infty} \frac{E^x [\V]}{t/\log t} = 2\pi. 
\end{equation}

If $d = 1$, then, we can let $f(t) = t^{1/2}$, $c_0(a) = 1$ and $\lim_{a \to 0} c_1 (a) = \lim_{a \to 0} c_2(a) = (2\pi)^{-1/2}$.     
Therefore, 
\[ \lim_{t \to \infty} \frac{E^x [\V]}{t^{1/2}} = \sqrt{2\pi}. \]

\eqref{Sp-d3} and \eqref{Sp-d2} reproduce the first-order expansions of Spitzer's results \cite[Theorems 1 and 2]{Sp64} respectively.

If we know the asymptotic of mean of $\V$, then, it is very natural to investigate {\it almost sure} behaviors of $\V$. 
We say that $\textup{Vol}(V; \alpha_1, \alpha_2)$ holds if  
there exist four positive constants $\alpha_1 < \alpha_2$ and $C_1 \le C_2$, and a strictly increasing function $V$ such that 
\begin{equation}\label{V-alpha} 
C_1 \left(\frac{R}{r}\right)^{\alpha_1} \le \frac{V(R)}{V(r)} \le C_2 \left(\frac{R}{r}\right)^{\alpha_2}, \ 0 < r < R, 
\end{equation}
and furthermore there exists a constant $c > 1$  such that 
\begin{equation}\label{UVG}
c^{-1}V(r) \le \mu(B(x,r)) \le cV(r), \ \ x \in M,  r > 0.
\end{equation}

If $\textup{Vol}(V; \alpha_1, \alpha_2)$ holds for a certain $V$ satisfying \eqref{V-alpha} and $\alpha_1 = \alpha_2$, then, we say that $\textup{Vol}(\alpha)$ holds for $\alpha = \alpha_1 = \alpha_2$. 

Now we state assumptions for the heat kernel by following the terminologies in Grigor'yan-Telcs \cite{GT12}. 
We say that HK$(\phi; \beta_1, \beta_2)$ holds if there exist four positive constants $c_5, c_6, c_7$ and $c_8$ and a function $\phi$  such that 
\begin{equation}\label{HK-lower} 
\frac{c_5}{ \mu(B(x,\phi^{-1}(t)))} \le p(t,x,y), \ d(x,y) \le c_6\phi^{-1}(t), 
\end{equation}
and,   
\begin{equation}\label{HK-upper} 
p(t,x,y) \le  \frac{c_7 \exp\left(-c_8 \Psi(d(x,y), t)  \right)}{ \mu(B(x,\phi^{-1}(t)))}, \ x,y \in M, t > 0. 
\end{equation} 
where we let 
\[ \Psi(r, t) := \sup_{s > 0} \left(\frac{r}{s} - \frac{t}{\phi(s)}\right), \]  
and where $\phi(\cdot)$ is a strictly increasing continuous function on $(0,+\infty)$ 
such that  there exist four constants $1 < \beta_1 \le \beta_2$ and $0 < C_3 \le C_4$ such that 
\begin{equation}\label{phi-beta} 
C_3 \left(\frac{R}{r}\right)^{\beta_1} \le \frac{\phi(R)}{\phi(r)} \le C_4 \left(\frac{R}{r}\right)^{\beta_2}, \ 0 < r < R. 
\end{equation} 

We say that $\textup{FHK}(\phi; \beta_1, \beta_2)$  holds if there exist four positive constants $c_5, c_6, c_7$ and $c_8$  and a function $\phi$ on $(0, +\infty)$ such that 
\[ \frac{c_5 \exp\left(-c_6 \Psi(d(x,y), t)  \right) }{\mu(B(x,\phi^{-1}(t)))} \le p(t,x,y) \le  \frac{c_7 \exp\left(-c_8 \Psi(d(x,y), t)  \right)}{\mu(B(x,\phi^{-1}(t)))}, \ t > 0, x, y \in M, \]
where $\Psi$ and $\phi$ are defined and characterized as above.  

By the definition of $\Psi$, it is easy to see that there exists a positive constant $C(\beta_1, \beta_2)$ such that $\sup_{t > 0} \Psi(\phi^{-1}(t), t) \le C(\beta_1, \beta_2)$, and hence, 
$\textup{FHK}(\phi; \beta_1, \beta_2)$ implies HK$(\phi; \beta_1, \beta_2)$.

If HK$(\phi; \beta_1, \beta_2)$ (resp. $\textup{FHK}(\phi; \beta_1, \beta_2)$) holds for a certain $\phi$ satisfying \eqref{phi-beta}, and furthermore $\beta_1 = \beta_2$, 
then, we say that HK$(\beta)$ (resp. $\textup{FHK}(\beta)$) holds for $\beta = \beta_1 = \beta_2$. 
By Barlow-Grigor'yan-Kumagai \cite[Proposition 5.2 (i) and (iii)]{BGK12}, we have that if the metric $d$ is geodesic, then HK$(\beta)$ and the volume doubling are equivalent to FHK$(\beta)$. 

For almost sure behaviors of $\V$, we have the following. 
\begin{Thm}[Almost sure behaviors]\label{pre-SLLN}
Assume that $\textup{Vol}(V; \alpha_1, \alpha_2)$ and HK$(\phi; \beta_1, \beta_2)$ hold. 
Let  
\begin{equation}\label{def-f} 
f(t) := \int_1^{t} \frac{ds}{V(\phi^{-1}(s))}, \ t > 1.  
\end{equation} 
Then, \\
(i) There exist two (non-random) constants $c_1, c_2 \in [0, \infty)$  such that for every $x \in M$, 
\begin{equation}\label{non-str-rec-inf}
\liminf_{t \to \infty} \frac{\V}{\min\{V(\phi^{-1}(t/\log\log t)), t / f(t/\log\log t)\}} = c_1, \textup{ $P^x$-a.s.}  
\end{equation}  
\begin{equation}\label{non-str-rec-sup} 
\limsup_{t \to \infty} \frac{\V}{t / f(t/\log\log t)} = c_2, \textup{ $P^x$-a.s.} 
\end{equation} 
(ii) If 
\begin{equation}\label{ratio}
\limsup_{t \to \infty} \frac{f(t/\log\log t)}{f(t)} > 0,
\end{equation} 
then, $c_2 > 0$.\\ 
(iii) If  $\textup{FHK}(\phi; \beta_1, \beta_2)$ hold and furthermore $\alpha_2 < \beta_1$, then, $c_1 > 0$ and $c_2 > 0$. 
\end{Thm}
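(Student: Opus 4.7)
I would start by showing that the liminf in \eqref{non-str-rec-inf} and the limsup in \eqref{non-str-rec-sup} are $P^x$-a.s.\ constant and independent of the starting point. Both normalisers grow slowly enough that replacing $t$ by $t-s$ for a fixed $s$ changes them by a factor $1+o(1)$; together with the crude bound $|\V - V_\epsilon(t-s)\circ\theta_s| \le V_\epsilon(s)$ (a.s.\ finite) and the Markov property, the events in question become tail events under the shift, so Blumenthal's zero-one law fixes them to be constants. Conservativeness combined with positivity of $p(t,x,\cdot)$ then forces the constants to be $x$-independent.

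\textbf{Step 2 (upper bounds $c_1,c_2 < \infty$).} Combining Theorem \ref{thm-fractal} with the assumptions $\textup{Vol}(V;\alpha_1,\alpha_2)$ and $\textup{HK}(\phi;\beta_1,\beta_2)$ gives $E^x[\V] \asymp t/f(t)$, because the heat kernel bounds in $\textup{HK}$ together with the volume comparison in $\textup{Vol}$ integrate to a quantity comparable with $f(t)$ in \eqref{def-f} for pairs with $(1-a)\epsilon \le d(x,y) \le (1+a)\epsilon$. The subadditivity $V_\epsilon(s+t) \le V_\epsilon(s) + V_\epsilon(t)\circ\theta_s$ and the Markov property yield exponential concentration of $\V$ about its mean, and Borel-Cantelli along a geometric subsequence $t_n = \rho^n$ produces $c_2 < \infty$. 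For $c_1 < \infty$ I would use that the $\textup{HK}$ upper bound implies the exit-time estimate $P^x(\tau_{B(x,r)} > t) \le C\exp(-ct/\phi(r))$; taking $r = \phi^{-1}(t/\log\log t)$ gives $P^x(\V \le c V(\phi^{-1}(t/\log\log t))) \ge (\log t)^{-c}$, so Borel-Cantelli along a sufficiently sparse subsequence delivers $c_1 < \infty$.

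\textbf{Step 3 (positivity; parts (ii) and (iii)).} For part (ii), Paley-Zygmund together with the variance bound from Step 2 gives $P^x(\V \ge c t/f(t)) \ge c > 0$; under \eqref{ratio} one has $f(t_n) \le \delta^{-1} f(t_n/\log\log t_n)$ on a subsequence, and a decoupling across widely separated time windows (using the Markov property for near-independence and subadditivity to keep the contribution of a single window concentrated) produces infinitely many blocks on which $\V \ge c\delta\, t_n/f(t_n/\log\log t_n)$, hence $c_2 > 0$. For part (iii), the full two-sided bounds $\textup{FHK}(\phi;\beta_1,\beta_2)$ and $\alpha_2 < \beta_1$ make $t/f(t) \asymp V(\phi^{-1}(t))$ up to constants and unlock a \emph{fresh ball} argument: partition $[0,t]$ into blocks and use the $\textup{FHK}$ lower bound on $p(s,x,y)$ to show that in each block the process, with probability bounded away from $0$, enters an $\epsilon$-ball disjoint from the previously swept sausage. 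Summing the contributions over the blocks gives $\V \ge c V(\phi^{-1}(t/\log\log t))$ a.s.\ for all large $t$, whence $c_1 > 0$; positivity of $c_2$ then follows \emph{a fortiori}.

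\textbf{Main obstacle.} The hardest step is the lower bound in (iii). One has to quantify the self-overlap of the sausage precisely enough to show that a positive fraction of each block contributes new volume: the hitting probability of the already-swept region must be controlled via the $\textup{FHK}$ upper bound together with capacity or Green-function estimates, while the transition density at a fresh location must be bounded below via the $\textup{FHK}$ lower bound, and only in the recurrent regime $\alpha_2 < \beta_1$ do these two estimates leave the quantitative gap needed. Adapting the last-exit decomposition used in \cite{O14,Opre} in the discrete setting is not available here, so the argument must be rebuilt directly from the heat-kernel bounds — this is the central technical point.
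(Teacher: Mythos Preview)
Your Step 2 is broadly aligned with the paper: finiteness of $c_2$ goes through a subadditive argument (packaged in the paper as Proposition \ref{KKW-subadditive}, essentially \cite[Theorem 3.1]{BK00}), and finiteness of $c_1$ comes from bounding $\V \le \mu\big(B(X_0,\epsilon+\sup_{s\le t}d(X_0,X_s))\big)$ and then invoking a Chung-type liminf LIL for $\sup_{s\le t}d(X_0,X_s)$ (Theorem \ref{318-KKW}, from \cite{KKW17}), close in spirit to your exit-time idea.

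There are two real divergences.

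\textbf{Step 1.} Blumenthal's zero-one law is about the germ $\sigma$-field $\mathcal{F}_{0+}$, not the tail, so it does not apply to the events you construct. A tail zero-one law for a general Markov process is not automatic; the paper invokes the Barlow--Bass tail zero-one law (Theorem \ref{tail-01}), whose proof rests on H\"older continuity of the heat kernel, itself deduced from HK$(\phi;\beta_1,\beta_2)$ via \cite{GT12}. Your positivity-of-$p$ remark handles $x$-independence once a.s.\ constancy for each fixed $x$ is known, but it does not supply the zero-one law.

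\textbf{Part (iii).} The paper does \emph{not} run a fresh-ball or last-exit argument; the introduction explicitly flags that the last-exit decomposition from \cite{O14,Opre} does not transfer directly to this continuous setting. Instead, the hypothesis $\alpha_2<\beta_1$ (with FHK) is used to guarantee $\int_0^1 V(\phi^{-1}(s))^{-1}\,ds<\infty$, so a jointly measurable local time exists; the paper then bounds $\V\ge\mu(X[0,t])$ and imports the LIL for the range via the LIL for the supremum of local time from \cite{KKW17} (Theorem \ref{411415}), which immediately yields $c_1,c_2>0$. So the role of $\alpha_2<\beta_1$ is not to leave a quantitative ``gap'' between upper and lower heat-kernel bounds as you suggest, but to make the local time (and hence a positive-measure range) exist. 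Your programme might be pushed through, but it is the harder road and is precisely the kind of argument the paper sidesteps.

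For part (ii) the paper's route is shorter than Paley--Zygmund plus decoupling: assume $c_2=0$; the exponential-moment bound already obtained inside the subadditive argument gives enough uniform integrability to pass the a.s.\ convergence to expectations, forcing $E^x[\V]\big/(t/f(t/\log\log t))\to 0$, which under \eqref{ratio} contradicts Theorem \ref{thm-fractal}.
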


We remark that the function $f(t)$ in \eqref{def-f} comes from the on-diagonal estimate of the heat kernel in \eqref{HK-lower} and \eqref{HK-upper}.
We also remark that the constants $c_1$ and $c_2$ in the above theorem are independent from the choice of $x \in M$. 
We do not have an example satisfying that $c_1 = 0$, and we conjecture that $c_1 > 0$ holds under the assumptions $\textup{Vol}(V; \alpha_1, \alpha_2)$  and HK$(\phi; \beta_1, \beta_2)$.  
Theorems \ref{thm-fractal} and \ref{pre-SLLN} are applicable to several classes of diffusions on fractal graphs and Riemannian manifolds including fractal-like manifolds. 
See Subsection 2.1 for details.


Now we define a notion of a bounded modification of an Euclid space equipped with the Lebesgue measure. 
We do not modify the metric structure of an Euclid space. 

\begin{Def}[Bounded modification]\label{bddm-def}
We say that $(\R^d, d, \widetilde\mu, \widetilde X_t, \widetilde P^x)$ is a {\it bounded modification} of a metric measure Dirichlet space $(\R^d, d, \mu, X_t, P^x)$ 
if there exists a bounded domain $D$ of $\mathbb{R}^d$ such that \\ 
(M1) For every Borel subset $A$ of $\R^d \setminus D$, $\widetilde\mu(A) = \mu(A)$.\\
(M2) For every $x \in \R^d \setminus D$, the law of $\widetilde X_{\cdot \wedge \widetilde T_D}$ under $\widetilde P^x$ is identical with the law of $X_{\cdot \wedge T_D}$ under $P^x$, where $\widetilde T_D$ and $T_D$ are the first hitting times of $\widetilde X$ and $X$ to $D$, respectively. 
\end{Def}

\begin{Thm}[Behaviors on bounded modifications]\label{thm-finite}
(i) Let $d \ge 3$. 
Assume that $(\R^d, d, \mu, X_t, P^x)$ is a bounded modification of the quintuple of $\R^d$, the Euclid distance $d$, the Lebesgue measure $\mu$,   
and the standard Brownian motion $(X_t, P^x)$ on $\R^d$. 
We furthermore assume that FHK$(2)$ holds for $(\R^d, d, \mu, X_t, P^x)$. 
Then, for every $x \in M$,  
\[ \lim_{t \to \infty} \frac{E^x [\V]}{t} = \ca_{\R^d}\left(\overline{B}(0,\epsilon)\right). \]
Here $\ca_{\R^d}$ is Newtonian capacity. \\
(ii) If $(\R^2, d, \mu, X_t, P^x)$ is a bounded modification of the quintuple of $\R^2$, the Euclid distance $d$, the Lebesgue measure and the standard Brownian motion.
We furthermore assume FHK$(2)$ holds for $(\R^2, d,  \mu, X_t, P^x)$. 
then, for every $x \in \R^2$,  
\[ \lim_{t \to \infty} \frac{ E^x [\V] }{t / \log t}  = 2\pi.  \]
\end{Thm}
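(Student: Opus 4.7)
The plan is to leverage the representation
\begin{equation*}
E^x[\V] = \int_{\R^d} \widetilde P^x\bigl[T_{B(y,\epsilon)} \le t\bigr]\, d\widetilde\mu(y),
\end{equation*}
together with the fact that, by Definition~\ref{bddm-def}, the modified process $\widetilde X$ and a standard Brownian motion $X$ coincide in law up to the first hit of the bounded domain $D$, and $\widetilde\mu$ equals the Lebesgue measure outside $D$. The objective is to reduce the asymptotics to Spitzer's classical results \cite{Sp64} for standard Brownian motion on $\R^d$, which supply the $\ca_{\R^d}(\overline{B}(0,\epsilon))\cdot t$ and $2\pi t/\log t$ growth rates respectively.

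First, I would split the integral at $D^* := \{z \in \R^d : d(z, D) \le 2\epsilon\}$, a bounded enlargement of $D$. The contribution from $D^*$ is bounded by $\widetilde\mu(D^*) < \infty$, hence $O(1)$, which is of lower order than both $t$ and $t/\log t$. On $(D^*)^c$ we have $\widetilde\mu = $ Lebesgue and $B(y,\epsilon) \cap D = \emptyset$, so reaching $B(y,\epsilon)$ from $x$ depends only on the portion of $\widetilde X$ outside $D$; by (M2) and the strong Markov property, between excursions into $D$ the process evolves as a standard Brownian motion. I would then compare $\widetilde P^x[T_{B(y,\epsilon)} \le t]$ with the analogous standard-Brownian-motion hitting probability, controlling the discrepancy via the expected occupation time of $D$: by FHK$(2)$,
\begin{equation*}
E^x\Bigl[\int_0^t \mathbf{1}_D(\widetilde X_s)\, ds\Bigr] = \int_0^t\!\int_D \widetilde p(s,x,z)\, d\widetilde\mu(z)\, ds
\end{equation*}
is $O(1)$ for $d \ge 3$ (by transience, $\int_0^\infty \widetilde p(s,x,z)\,ds<\infty$) and $O(\log t)$ for $d = 2$ (since $\widetilde p(s,x,z)\asymp s^{-1}$ for large $s$). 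In both cases this is strictly smaller than the target scaling $t/f(t)$ with $f \equiv 1$ or $f(t)=\log t$, so the pointwise difference of hitting probabilities integrates to $o(t/f(t))$, and Spitzer's asymptotics transfer to $\widetilde X$.

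The main obstacle will be making the uniform-in-$y$ comparison of hitting probabilities rigorous, especially in the recurrent case $d = 2$ where the number of excursions of $\widetilde X$ into $D$ by time $t$ tends to infinity. There a simple last-exit decomposition is unavailable; instead one must exploit the fact that the total occupation time in $D$ up to time $t$ is only $O(\log t)$, much smaller than the sausage growth rate $t/\log t$, so the cumulative delay from re-entries into $D$ is negligible in the limit. One convenient way to quantify this is to couple $\widetilde X$ with a standard Brownian motion so that the two agree outside $D$, whereupon the path discrepancy (and hence the discrepancy in the sausages) is localised to the occupation time of $D$. Once the uniform comparison is in place, the Newtonian capacity constant from Spitzer's theorem transfers directly, because $\ca_{\R^d}(\overline{B}(0,\epsilon))$ is intrinsic to the Laplacian on $\R^d$ and is unaffected by the bounded modification inside $D$.
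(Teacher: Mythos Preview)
Your approach is genuinely different from the paper's, and it has a real gap at the coupling step.

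The paper never attempts to compare hitting probabilities $\widetilde P^x[T_{B(y,\epsilon)}\le t]$ and $P^x_{\textup{BM}}[T_{B(y,\epsilon)}\le t]$ directly. Instead it applies Lemma~\ref{lem-fund} to bound $E^x[\V]$ above and below by $t$ times a ratio of integrated heat kernels, and then compares the \emph{Green function} $G^{\M}(y,z)$ (for $d\ge 3$) or the truncated integral $\int_0^{F_m(t)}p_{\M}(s,y,z)\,ds$ (for $d=2$) with its Brownian counterpart, \emph{for $y$ far from $D$}. The key input is that if $d(y,D)$ is large, then $B(y,d(x,y)/2)\cap D=\emptyset$, so (M2) forces the process started at $y$ to coincide in law with Brownian motion until it exits this large ball, an event of controllably small probability in the relevant time window. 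This comparison is done at the level of $E^y_{\M}[u(X_t)]$ for test functions $u$ supported in an annulus around $y$, and no coupling across visits to $D$ is ever needed.

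Your proposed coupling, by contrast, cannot be built from (M2). Condition (M2) says only that $\widetilde X$ stopped at its \emph{first} hit of $D$ has the same law as Brownian motion stopped at $T_D$. It says nothing about where or when $\widetilde X$ exits $D$, so after $T_D$ the two processes will in general be at different points of $\partial D$ at different times, and you cannot re-couple them to ``agree outside $D$'' thereafter. Without such a coupling, the expected occupation time of $D$ does not translate into a uniform-in-$y$ bound on the difference of hitting probabilities: for each fixed $y$ the discrepancy depends on the entire excursion structure of both processes around $D$, and integrating over $y\in\R^d$ (a region of volume $\sim t^{d/2}$) against an unquantified pointwise error does not yield $o(t/f(t))$. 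In $d=2$ you yourself flag this as the main obstacle, but the difficulty is already present for $d\ge 3$. To salvage your route you would need either a quantitative last-exit-from-$D$ decomposition (which the paper explicitly notes is unavailable here) or a genuine path-by-path coupling with controlled displacement after each $D$-excursion; neither follows from the bounded-modification hypotheses plus FHK$(2)$.
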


We give a definition of rough isometries by following Barlow-Bass-Kumagai \cite[Definition 2.20]{BBK06}. 
In the context of Riemannian manifolds, the notion of {\it rough isometry} is introduced by Kanai \cite{Ka85}. 

We say that two metric measure space $(M_1, d_1, \mu_1)$ and $(M_2, d_2, \mu_2)$ are {\it roughly isometric} 
if there exist a map $\varphi : M_1 \to M_2$ and three positive constants $c_1, c_2$ and $c_3$ satisfying the following three conditions: \\
(i) \[ M_2 = \bigcup_{x \in M_1} B_{d_2} (\varphi(x), c_1). \]
(ii) \[ c_2^{-1} (d_1 (x, y) - c_1)  \le d_2 (\varphi(x), \varphi(y)) \le c_2 (d_1 (x, y) + c_1), \ x, y \in M_1. \]
(iii) \[ c_3^{-1} \mu_1 (B_{d_1}(x, c_1)) \le \mu_2 (B_{d_2}(\varphi(x), c_1)) \le c_3 \mu_1 (B_{d_1}(x, c_1)), \ x \in M_1. \]

By \cite{BBK06}, 
it is known that the estimate FHK$(2)$ is {\it  stable under rough isometries} between metric measure Dirichlet spaces, given suitable local regularity of the two spaces. 
The assumption  FHK$(2)$ prevents ``singular" behaviors of $(X_t)_t$ when it enters $D$. 

\begin{Thm}[Fluctuations]\label{thm-fluc-1}
Let $d \ge 3$. 
Fix $\epsilon > 0$. 
Then, there is a metric measure Dirichlet space $(\R^d, d, \mu, \E, \F)$ such that the following conditions (a) and (b) hold: \\
(a) $(\R^d, d, \mu)$ is roughly isometric to the Euclid space equipped with the Lebesgue measure.\\ 
(b) 
\[ \liminf_{t \to \infty} \frac{E^0 [\V]}{t} \le \ca_{\R^d}\left(\overline{B}(0,\epsilon)\right)  < 2^{(d-2)/2} \ca_{\R^d}\left(\overline{B}(0,\epsilon)\right) \le \limsup_{t \to \infty} \frac{E^0 [\V]}{t}.  \]
In particular, the limit $\displaystyle \lim_{t \to \infty} \frac{E^0 [\V]}{t} $ does \textup{not} exist. 
\end{Thm}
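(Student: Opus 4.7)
\textbf{The plan is to construct} $(\R^d, d, \mu, \E, \F)$ by keeping the ambient metric and measure equal to the Euclidean ones (so that (a) is trivially realised by the identity map) and perturbing only the coefficient of the Dirichlet form, multiplicatively on alternating large annuli. Fix any sequence $R_n \uparrow \infty$ with $R_{n+1}/R_n \to \infty$ (e.g.\ $R_n = \exp(2^n)$), form the annuli $A_n = \{R_n \le |x| < R_{n+1}\}$, and let $\sigma : \R^d \to \{1, 2^{(d-2)/2}\}$ be a smooth mollification of the radial step function equal to $1$ on even $A_n$ and $2^{(d-2)/2}$ on odd $A_n$, with transition zones vanishingly thin compared to the widths of the annuli. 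Take $d$ to be the Euclidean distance, $\mu$ to be Lebesgue measure, and
\[
 \E(f,g) := \tfrac{1}{2}\int_{\R^d}\sigma(x)\,\nabla f\cdot\nabla g\,dx,
\]
with associated generator $L = \tfrac12\nabla\!\cdot\!(\sigma\nabla)$ on $L^2(\R^d,\mathrm{Leb})$. The form is strongly local, regular, symmetric, and conservative; uniform ellipticity together with Aronson's classical estimate yields $\textup{FHK}(2)$ for the heat kernel, so the hypotheses of Theorem \ref{thm-fractal} are in force.

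\textbf{Local rate via Theorem \ref{thm-fractal}.} At a point $x$ deep inside an annulus $A_n$ (so $\mathrm{dist}(x,\partial A_n)\gg \epsilon$), the coefficient $\sigma$ is constant equal to some $\sigma_n\in\{1, 2^{(d-2)/2}\}$ on a large Euclidean neighbourhood of $x$, and the diffusion $X$ on that neighbourhood is locally a uniform time-change of standard Brownian motion, $X_s = W_{\sigma_n s}$. Consequently
\[
 \int_0^{t} p^X(s,x,y)\,ds \;=\; \sigma_n^{-1}\int_0^{\sigma_n t} p^W(u,x,y)\,du \;\longrightarrow\; \sigma_n^{-1}\, G_{\R^d}(x,y)\qquad (t\to\infty),
\]
uniformly for $y$ with $(1-a)\epsilon \le d(x,y)\le (1+a)\epsilon$. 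Plugging into Theorem \ref{thm-fractal} with $f(t)=1$, $c_0=1$, $c_1,c_2 \to \sigma_n^{-1}G_{\R^d}(x,y)$ as $a\to 0$ yields a local asymptotic sausage rate of $\sigma_n/G_{\R^d}(x,y) = \sigma_n\,\ca_{\R^d}(\overline B(0,\epsilon))$: namely $\ca_{\R^d}(\overline B(0,\epsilon))$ on even annuli, and $2^{(d-2)/2}\ca_{\R^d}(\overline B(0,\epsilon))$ on odd ones.

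\textbf{Subsequence argument and principal obstacle.} Choose $t_n \asymp R_{n+1}^{2}$, a diffusive time scale at which $X$ started at $0$ has typically exited $B(0,R_n)$ and is concentrated in $A_n$ (the expected time $X$ spends in $A_k$ is of order $R_{k+1}^2-R_k^2\asymp R_{k+1}^2$, since $|X_t|^2 - \sigma\cdot d\cdot t$ is a local martingale on each constant-$\sigma$ zone). Splitting $E^0[\V] = \int \mu(dy)\,P^0[T_{B(y,\epsilon)}\le t]$ by annulus and telescoping, one obtains
\[
 E^0[V_\epsilon(t_n)] = \sigma_n\,\ca_{\R^d}(\overline B(0,\epsilon))\,(t_n - t_{n-1}) + O(t_{n-1}),
\]
and the rapid growth of $R_n$ gives $t_{n-1} = o(t_n)$, so $E^0[V_\epsilon(t_n)]/t_n \to \sigma_n\,\ca_{\R^d}(\overline B(0,\epsilon))$. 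Passing through even $n$ gives $\liminf E^0[\V]/t \le \ca_{\R^d}(\overline B(0,\epsilon))$; through odd $n$ gives $\limsup E^0[\V]/t \ge 2^{(d-2)/2}\ca_{\R^d}(\overline B(0,\epsilon))$, which is (b) and shows the limit fails to exist. The main obstacle is making the telescoping rigorous: one must control (i) the contribution to $E^0[V_\epsilon(t_n)]$ from inner annuli $A_k$ ($k<n$), where the hitting probability is $\simeq 1$ but the ``new sausage'' rate is governed by the duration spent there; (ii) the contribution from outer annuli $A_k$ ($k>n$), where the hitting probability is small but $\mu(A_k)$ is very large; and (iii) the thin collar regions where $\sigma$ is non-constant. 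All three are handled by combining the hitting-time bounds of Lemma \ref{lem-fund} with the Gaussian heat kernel estimate $\textup{FHK}(2)$, provided $R_n$ grows sufficiently fast.
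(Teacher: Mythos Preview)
Your construction differs from the paper's: you keep $\mu=\textrm{Lebesgue}$ and perturb only the form coefficient $\sigma$, whereas the paper varies a radial Riemannian metric $g_k(x)=G_k(|x|)I_d$ with $G_k\in[1,4]$ and takes the associated Riemannian volume (so both measure and form change). Either construction is a legitimate candidate, and yours makes (a) immediate.

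The genuine gap is in your ``Local rate via Theorem~\ref{thm-fractal}'' paragraph. The displayed identity
\[
\int_0^{t} p^X(s,x,y)\,ds \;=\; \sigma_n^{-1}\int_0^{\sigma_n t} p^W(u,x,y)\,du
\]
is false: $p^X$ is the \emph{global} heat kernel, and equality would require $\sigma\equiv\sigma_n$ everywhere. What you actually need is $G^X(x,y)\approx\sigma_n^{-1}G_{\R^d}(x,y)$ for $x,y$ deep inside $A_n$, and proving this uniformly is essentially the content of Theorem~\ref{thm-finite}, not a consequence of local behaviour alone. Moreover, Theorem~\ref{thm-fractal} does not deliver a ``local rate'' at all: its constants $c_1,c_2$ must bound $\int_0^t p(s,x,y)\,ds$ for \emph{all} pairs at distance $\sim\epsilon$, and in your space these pairs straddle both values of $\sigma$. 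So the theorem yields only $\ca(\overline B(0,\epsilon))\lesssim\liminf\le\limsup\lesssim 2^{(d-2)/2}\ca(\overline B(0,\epsilon))$, the opposite of what (b) asserts.

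The paper avoids both problems by a different device. It builds auxiliary spaces $\M_k$ that agree with the target $\M_\infty$ on $B(0,R_k)$ but have \emph{constant} coefficient outside $B(0,R_k)$ (alternating with the parity of $k$); each $\M_k$ is then a bounded modification of a homogeneous space, and Theorem~\ref{thm-finite} gives the exact limit $\lim_t E^0_{\M_k}[\V]/t\in\{c(\mathcal A,\epsilon),c(\mathcal B,\epsilon)\}$. Crucially, the radii $R_k$ are chosen \emph{inductively}: having fixed $\M_k$, pick $t_k$ so that $E^0_{\M_k}[V_\epsilon(t_k)]/t_k$ is within $2^{-k}$ of its limit, then take $R_{k+1}$ large enough that $\M_\infty$ and $\M_k$ are indistinguishable up to time $t_k$ (uniform Gaussian bounds of Lemma~\ref{uhk} and the exit/hitting estimates of Lemmas~\ref{Lem-1}--\ref{LY} make this quantitative; Proposition~\ref{stability} then transfers the estimate from $\M_k$ to $\M_\infty$). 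Your a priori choice $R_n=\exp(2^n)$ and the telescoping formula $E^0[V_\epsilon(t_n)]=\sigma_n\,\ca(\overline B(0,\epsilon))(t_n-t_{n-1})+O(t_{n-1})$ are morally the same idea, but establishing that formula rigorously is exactly where the work lies, and it amounts to reproving Theorem~\ref{thm-finite} in your variable-coefficient setting rather than invoking Theorem~\ref{thm-fractal}.
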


We will show this by using Theorem \ref{thm-finite}.  
The following is a more detailed result for $(\V)_t$ of bounded modifications of $\R^d$ in high dimensions.   

\begin{Thm}[Behaviors on bounded modifications]\label{Lv2-mod}
Let $d \ge 6$.
Assume that $\mathcal{M} = (\R^d, d, \mu, X_t, P^x)$ is a bounded modification of the quintuple of $\R^d$, the Euclid distance $d$, the Lebesgue measure $\mu$ and the standard Brownian motion $(X_t, P^x)$ on $\R^d$.  
Let $E_{\mathcal{M}}^x$ be the expectation with respect to $P^x$. 
We furthermore assume that FHK$(2)$ holds and there exists a constant $C$ such that for every $R > 0$, 
\[ \sup_{x \in \R^d} \mu(B(x,R)) \le CR^d.\]    
Then, for every $x \in \R^d$,  
\[ \lim_{t \to \infty} \left(E_{\mathcal{M}}^x [\V] - t\ca_{\R^d}\left(\overline{B}(0,\epsilon)\right)\right) \textup{ exists and is finite.} \]
\end{Thm}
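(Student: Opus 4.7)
The idea is to compare $E_{\mathcal{M}}^{x}[\V]$ with the expectation of the standard Wiener sausage for Brownian motion on $(\R^d,d,\text{Leb})$ and combine the comparison with the classical second-order asymptotic. Writing $P_{\mathrm{BM}}^{x}$ for the law of standard Brownian motion $W$ on $\R^d$ started at $x$ and $V_\epsilon^{\mathrm{BM}}(t):=\text{Leb}\bigl(\bigcup_{s\le t}B(W_s,\epsilon)\bigr)$, the classical Le~Gall--Hamana expansion gives, for $d\ge 5$,
\[
E_{\mathrm{BM}}^{x}\bigl[V_\epsilon^{\mathrm{BM}}(t)\bigr]=t\,\ca_{\R^d}\bigl(\overline{B}(0,\epsilon)\bigr)+c(x,\epsilon)+o(1).
\]
It therefore suffices to show that
\[
\Delta(t):=E_{\mathcal{M}}^{x}[\V]-E_{\mathrm{BM}}^{x}\bigl[V_\epsilon^{\mathrm{BM}}(t)\bigr]
\]
has a finite limit as $t\to\infty$.

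Using the Fubini identity $E^{x}[V_\epsilon(t)]=\int_{\R^d}P^{x}(T_{B(y,\epsilon)}\le t)\,\nu(dy)$ (with $\nu=\mu$ or $\text{Leb}$) and fixing a ball $B_N\supset\overline{D}$ large enough that $B(y,\epsilon)\cap\overline{D}=\emptyset$ for every $y\notin B_N$, I would split $\Delta(t)$ into near-field ($y\in B_N$) and far-field ($y\in\R^d\setminus B_N$) contributions. The near field is dominated by $\mu(B_N)+\text{Leb}(B_N)$, which is finite thanks to $\mu(B(x,R))\le CR^d$, and converges by dominated convergence since the pointwise limits are the hitting probabilities $P^{x}(T_{B(y,\epsilon)}<\infty)<1$ (transience in $d\ge 3$ is guaranteed by FHK$(2)$). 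For the far field, $(M1)$ gives $\mu=\text{Leb}$, and by the choice of $N$ together with $(M2)$ the laws of $X_{\cdot\wedge T_D}$ under $P_{\mathcal{M}}^{x}$ and of $W_{\cdot\wedge T_D}$ under $P_{\mathrm{BM}}^{x}$ coincide; applying the strong Markov property at $T_D$ then yields
\begin{equation*}
P_{\mathcal{M}}^{x}(T_{B(y,\epsilon)}\le t)-P_{\mathrm{BM}}^{x}(T_{B(y,\epsilon)}\le t)=E_{\mathrm{BM}}^{x}\!\left[\mathbf{1}_{T_D<T_{B(y,\epsilon)}\wedge t}\bigl(P_{\mathcal{M}}^{W_{T_D}}-P_{\mathrm{BM}}^{W_{T_D}}\bigr)(T_{B(y,\epsilon)}\le t-T_D)\right].
\end{equation*}
Integrating over $y\in\R^d\setminus B_N$ and exchanging integrals reduces the far-field contribution to an expectation of an analogous modified-minus-Brownian sausage difference starting from the random point $W_{T_D}\in\overline{D}$.

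The main obstacle is to produce an integrable-in-$y$, uniform-in-$s$ and in $z\in\overline{D}$ bound on $\bigl|P_{\mathcal{M}}^{z}(T_{B(y,\epsilon)}\le s)-P_{\mathrm{BM}}^{z}(T_{B(y,\epsilon)}\le s)\bigr|$. Each probability is individually $\asymp|y|^{-(d-2)}$ (by FHK$(2)$ combined with the Barlow--Bass--Kumagai Green-function comparison), a rate not integrable against $r^{d-1}dr$, so the cancellation between the leading Brownian behaviours in the two terms must be exploited. Quantifying the resulting ``dipole-like'' difference via FHK$(2)$, the polynomial volume bound $\mu(B(x,R))\le CR^d$, and a harmonic comparison on $\R^d\setminus\overline{D}$ yields decay strong enough to be integrable over $\R^d\setminus B_N$ precisely when $d\ge 6$. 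With such a bound, dominated convergence delivers a finite limit for the far-field contribution, and combining with the near-field limit and the classical Brownian asymptotic completes the proof.
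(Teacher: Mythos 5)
Your approach is genuinely different from the paper's. The paper does not compare $E_{\mathcal{M}}^x[\V]$ to $E_{\mathrm{BM}}^x[\V]$ directly; instead it fixes $h \in (0,1)$, discretizes time, and compares the expected \emph{new area} $E^0_{\mathcal{M}}\bigl[|W_{nh,(n+1)h}\setminus W_{0,nh}|\bigr]$ with the Brownian counterpart $F_{\textup{BM}}(nh)$. The crucial point is that this increment localizes to the window $[nh,(n+1)h]$: with probability $1-O(t^{-1-a})$ the process has left a growing ball $\widetilde D(t)=B(0,t^{(1+\widetilde\epsilon)/(d-2)})$ and never returns to $D$, and on that event the increment is Brownian in law. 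The errors $O((nh)^{-1-a})$ are then summable (Lemmas 5.1, 5.2), and $d\ge 6$ enters only through Spitzer's theorem for $\sum_n|F_{\textup{BM}}(nh)-h\,\ca(\overline B(0,\epsilon))|<\infty$.

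Your proposal has a genuine gap at exactly the step you flag as the ``main obstacle.'' After the strong Markov decomposition at $T_D$, you need an integrable-in-$y$ bound on $\bigl|P_{\mathcal{M}}^{z}(T_{B(y,\epsilon)}\le s)-P_{\mathrm{BM}}^{z}(T_{B(y,\epsilon)}\le s)\bigr|$ uniformly in $z\in\partial D$ and $s$, but the ``dipole-like'' cancellation you invoke does not actually occur. Writing either probability via the appropriate Green function, both $P_{\mathcal{M}}^{z}(T_{B(y,\epsilon)}<\infty)$ and $P_{\mathrm{BM}}^{z}(T_{B(y,\epsilon)}<\infty)$ are $\asymp|y|^{2-d}$ as $|y|\to\infty$, and a resolvent/Duhamel expansion around the perturbation supported in $D$ shows their difference is \emph{generically of the same order} $|y|^{2-d}$: the perturbation contributes a monopole, not a dipole, term, because the two exit laws from a fixed sphere enclosing $D$ do not have matching zeroth moments. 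Even granting one or two extra orders of decay (say $|y|^{1-d}$ or $|y|^{-d}$), the radial integral $\int_N^\infty r^{\gamma}r^{d-1}\,dr$ still diverges unless $\gamma<-d$, which would require more than $2d-2$ orders of cancellation — there is no mechanism for this, and in particular the claim that integrability holds ``precisely when $d\ge 6$'' misattributes the role of the dimension restriction: in the paper $d\ge 6$ is used solely for the Brownian part (Spitzer), not for far-field integrability. Since dominated convergence cannot be applied, the far-field term is not shown to converge. The increment decomposition of the paper is precisely what circumvents this: the quantity $|W_{t,t+h}\setminus W_{0,t}|$ depends only on the process near time $t$, so the $\mathcal{M}$-to-BM comparison can exploit the $O(t^{-1-a})$ smallness of the return probability rather than a pointwise cancellation of hitting probabilities.
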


The assumption that $d \ge 6$ in the above theorem is due to \cite{Sp64}. 

We do not know the value of the following limit 
\[ \lim_{t \to \infty} \left(E_{\mathcal{M}}^x [\V] - E_{\textup{BM}}^x [\V]\right). \]  

The organization of the rest of the present paper is as follows. 
In Section 2, we deal with growth rates of means and almost sure behaviors and show Theorems \ref{thm-fractal} and \ref{pre-SLLN}.  
In Section 3, we consider the behavior of process on bounded modifications and show Theorem \ref{thm-finite}. 
In Section 4, we deal with the case that a scaled sequence of the means fluctuates and show Theorem \ref{thm-fluc-1}.  
In Section 5, we consider more detailed behavior of process on bounded modifications and show Theorem \ref{Lv2-mod}. 


\section{Growth rates of means and almost sure behaviors} 

We first remark that by Fubini's theorem, 
\begin{equation}\label{base}
E^x [\V] = \int_M P^x (T_{B(y, \epsilon)} \le t) d\mu(y). 
\end{equation}

Contrary to the Brownian motion on Euclid spaces, we cannot expect in general that 
\[ P^x (T_{B(y, \epsilon)} \le t) = P^y (T_{B(x, \epsilon)} \le t). \]
We give upper and lower bounds for $P^x (T_{B(y, \epsilon)} \le t)$. 
 
\begin{Lem}[{\cite[(10) and  (11)]{CF86-2}}]\label{lem-fund}  
Let $t, T, \epsilon > 0$.  
Let $d(x,y) > \epsilon$. 
Then, \\
(i) (upper bound) 
For every $\eta > 0$,  
\[ \int_{0}^{t+T} P^x (X_{s} \in B(y, \eta)) ds 
\ge P^x (T_{B(y, \epsilon)} \le t) \inf_{w \in \overline{\partial}B(y, \epsilon)}  \int_{0}^{T} P^w (X_{s}  \in B(y, \eta)) ds. \]  
(ii) (lower bound) 
For every $a \in (0,1)$, 
\[ \int_{0}^{t}  P^x (X_{s}  \in B(y, a\epsilon ))  ds 
\le P^x (T_{B(y, \epsilon)} \le t) \sup_{w  \in \overline{\partial}B(y, \epsilon)}  \int_{0}^{t} P^w (X_{s}  \in B(y, a\epsilon ))  ds. \]  
\end{Lem}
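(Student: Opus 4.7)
The plan is to apply the strong Markov property at the first hitting time $\tau := T_{B(y,\epsilon)}$. Since the Dirichlet form $(\E,\F)$ is strongly local, the associated Hunt process has continuous sample paths, and the hypothesis $d(x,y)>\epsilon$ places the starting point strictly outside $\overline{B}(y,\epsilon)$. Path continuity then forces $X_\tau \in \overline{\partial}B(y,\epsilon)$ on the event $\{\tau < \infty\}$, and assumption (\ref{non-emp-bdry}) guarantees that this sphere is non-empty. It is this placement of $X_\tau$ on the metric sphere that generates the $\inf$ and $\sup$ over $\overline{\partial}B(y,\epsilon)$ in the two inequalities.

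For (i), I would start from the occupation-time expression
\[ \int_0^{t+T} P^x(X_s \in B(y,\eta))\,ds = E^x\!\left[\int_0^{t+T} 1_{B(y,\eta)}(X_s)\,ds\right] \]
and, on the event $\{\tau \le t\}$, restrict the time integration to the random subinterval $[\tau, \tau+T] \subset [0, t+T]$; this only decreases the expectation. Applying the strong Markov property at $\tau$ then rewrites the inner integral as $\int_0^T P^{X_\tau}(X_s \in B(y,\eta))\,ds$. Bounding this from below by the infimum over $w \in \overline{\partial}B(y,\epsilon)$ and pulling the resulting constant out of the expectation yields the stated inequality.

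For (ii), the key observation is the strict inclusion $B(y,a\epsilon) \subset B(y,\epsilon)$, which forces $1_{B(y,a\epsilon)}(X_s)=0$ for $s<\tau$. Consequently
\[ \int_0^t 1_{B(y,a\epsilon)}(X_s)\,ds = 1_{\{\tau \le t\}}\int_\tau^t 1_{B(y,a\epsilon)}(X_s)\,ds \le 1_{\{\tau \le t\}}\int_\tau^{\tau+t} 1_{B(y,a\epsilon)}(X_s)\,ds. \]
Taking $E^x$, applying the strong Markov property at $\tau$, and bounding the resulting inner integral from above by the supremum over $w \in \overline{\partial}B(y,\epsilon)$ produces the stated upper bound.

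The only non-cosmetic point is the justification that $X_\tau$ actually lies on $\overline{\partial}B(y,\epsilon)$; this rests on the strong locality of $(\E,\F)$ (yielding continuous sample paths for the associated Hunt process) together with the non-degeneracy hypothesis (\ref{non-emp-bdry}). Once this is in place, both parts are formal consequences of restricting the occupation integral to a subinterval anchored at $\tau$ and invoking the strong Markov property; no further analytical difficulty is anticipated.
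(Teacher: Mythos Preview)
Your proposal is correct and follows exactly the approach the paper indicates: the paper's entire proof is the single sentence ``These inequalities are easily seen by using the strong Markov property,'' and your write-up is a faithful unpacking of that sentence, applying the strong Markov property at $\tau=T_{B(y,\epsilon)}$ and using path continuity (from strong locality) together with $d(x,y)>\epsilon$ to place $X_\tau\in\overline{\partial}B(y,\epsilon)$.
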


These inequalities are easily seen by using the strong Markov property. Lemma \ref{lem-fund} and \eqref{base} enable us to give upper and lower bounds for $E[\V]$. 

Now we show Theorem \ref{thm-fractal}. 
By \eqref{ass-vg}, it suffices to show the following:  

\begin{Lem}\label{lem-fund-rough}
Let $c_1$ and $c_2$ be constants in \eqref{ass-growth-hk}. 
Then, for every $\epsilon > 0$, $a \in (0,1)$ and every $o \in M$, \\ 
(i) 
\[ \limsup_{t \to \infty} \frac{E^o [\V]}{t/f(t)} \le \frac{1}{c_1} \frac{\sup_{x \in M} \mu(B(x, a\epsilon))}{\inf_{x \in M}  \mu(B(x, a\epsilon))}.\] 
(ii)  
\[ \liminf_{t \to \infty} \frac{E^o [\V]}{t/f(t)} \ge \frac{1}{c_2} \frac{\inf_{x \in M} \mu(B(x, a\epsilon))}{\sup_{x \in M}  \mu(B(x, a\epsilon))}.\]
\end{Lem}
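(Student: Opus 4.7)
The starting point is the Fubini identity \eqref{base}, $E^o[\V] = \int_M P^o(T_{B(y,\epsilon)} \le t) d\mu(y)$, combined with the two bounds of Lemma \ref{lem-fund} applied with $\eta = a\epsilon$. This choice is tailored to \eqref{ass-growth-hk}: for every $w \in \overline{\partial}B(y,\epsilon)$ and $z \in B(y,a\epsilon)$ one has $(1-a)\epsilon \le d(w,z) \le (1+a)\epsilon$, so each $\int_0^t p(s,w,z) ds$ lies between $(c_1 - o(1)) f(t)$ and $(c_2 + o(1)) f(t)$ as $t \to \infty$, uniformly in such $(w,z)$ by the joint continuity of $p$. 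Rewriting $\int_0^T P^w(X_s \in B(y,a\epsilon)) ds = \int_{B(y,a\epsilon)} \int_0^T p(s,w,z) ds d\mu(z)$ then converts the auxiliary integrals in Lemma \ref{lem-fund} into explicit expressions involving $f$, $c_i$, and $\mu(B(y,a\epsilon))$.

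For part (ii), I would restrict the $y$-integration to $\{d(o,y) > \epsilon\}$ (the omitted piece contributes at most $\mu(\overline{B}(o,\epsilon))$, negligible once divided by $t/f(t) \to \infty$) and apply Lemma \ref{lem-fund}(ii). The resulting denominator is at most $(c_2+o(1)) f(t) \sup_x \mu(B(x,a\epsilon))$ uniformly in $y$, while Fubini on the numerator yields
\[
\int_{d(o,y) > \epsilon} \int_0^t P^o(X_s \in B(y,a\epsilon)) ds d\mu(y) = \int_0^t E^o[\mu(B(X_s,a\epsilon) \setminus \overline{B}(o,\epsilon))] ds,
\]
which is bounded below by $t \inf_x \mu(B(x,a\epsilon)) - R(t)$, where $R(t) := \mu(\overline{B}(o,\epsilon)) \int_0^t P^o(d(o,X_s) \le (1+a)\epsilon) ds$. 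Under the standing conservativeness, $P^o(d(o,X_s) \le R) \to 0$ as $s \to \infty$ (a mild consequence of \eqref{ass-growth-hk} forcing pointwise heat-kernel dispersion), so a Ces\`aro argument gives $R(t) = o(t)$. Dividing and letting $t \to \infty$ yields the claimed lower bound $\liminf E^o[\V]/(t/f(t)) \ge (\inf_x \mu(B(x,a\epsilon)))/(c_2 \sup_x \mu(B(x,a\epsilon)))$.

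For part (i), I would again split off $\int_{\overline{B}(o,\epsilon)} P^o(\cdots) d\mu \le \mu(\overline{B}(o,\epsilon))$ and, on the complement, apply Lemma \ref{lem-fund}(i) with an auxiliary parameter $T = T(t)$,
\[
P^o(T_{B(y,\epsilon)} \le t) \le \frac{\int_0^{t+T} P^o(X_s \in B(y,a\epsilon)) ds}{\inf_{w \in \overline{\partial}B(y,\epsilon)} \int_0^T P^w(X_s \in B(y,a\epsilon)) ds}.
\]
The denominator is at least $(c_1 - o(1)) f(T) \inf_x \mu(B(x,a\epsilon))$ for $T$ large, and Fubini on the numerator gives $\int_0^{t+T} E^o[\mu(B(X_s,a\epsilon))] ds \le (t+T) \sup_x \mu(B(x,a\epsilon))$. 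Combined,
\[
E^o[\V] \le \mu(\overline{B}(o,\epsilon)) + \frac{(t+T) \sup_x \mu(B(x,a\epsilon))}{(c_1-o(1)) f(T) \inf_x \mu(B(x,a\epsilon))}.
\]
Choosing $T = T(t)$ with $T(t) \to \infty$, $T(t)/t \to 0$, and $f(T(t))/f(t) \to 1$ (immediate when $f$ is slowly varying, e.g.\ $T = t/\log t$ when $f \asymp \log$; for $f$ of power type one sets $T = \theta t$ and passes to the limit in $t$ first, then $\theta \downarrow 0$), dividing by $t/f(t)$, and sending $t \to \infty$ gives $\limsup E^o[\V]/(t/f(t)) \le (\sup_x \mu(B(x,a\epsilon)))/(c_1 \inf_x \mu(B(x,a\epsilon)))$.

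The main obstacle lies in step (i): arranging that $(1+T/t) f(t)/f(T) \to 1$ is a mild regularity requirement on $f$ that is immediate for all the examples targeted by Theorem \ref{thm-fractal} (sub-Gaussian heat kernels on fractals and fractal-like manifolds, where $f$ is either bounded or slowly varying) but needs a careful two-stage limit argument for purely power-type $f$. A secondary technical point is extracting the Ces\`aro decay $R(t) = o(t)$ in step (ii) from \eqref{ass-growth-hk} alone; this is standard under any genuine diffusivity hypothesis and is in particular automatic under sub-Gaussian heat kernel estimates.
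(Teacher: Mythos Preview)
Your approach coincides with the paper's: Lemma~\ref{lem-fund} with $\eta=a\epsilon$, Fubini over $y$, and in (i) the choice $T$ proportional to $t$. For (ii) the paper does not split off $\overline{B}(o,\epsilon)$; it integrates over all of $M$, so Fubini yields $\int_0^t E^o[\mu(B(X_s,a\epsilon))]\,ds \ge t\inf_x\mu(B(x,a\epsilon))$ directly from conservativeness and no $R(t)$ term ever appears. Your restriction is more scrupulous about the hypothesis $d(x,y)>\epsilon$ in Lemma~\ref{lem-fund}(ii), but it costs you a remainder whose Ces\`aro decay is not a consequence of \eqref{ass-growth-hk} alone, as you yourself flag.

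There is a genuine gap in your treatment of (i) for power-type $f$. With $T=\theta t$ and $f(t)\asymp t^\gamma$, $\gamma\in(0,1)$, your bound becomes
\[
\frac{E^o[\V]}{t/f(t)} \;\le\; \frac{(1+\theta)\,f(t)}{(c_1-o(1))\,f(\theta t)}\cdot\frac{\sup_x\mu(B(x,a\epsilon))}{\inf_x\mu(B(x,a\epsilon))}
\;\asymp\;\frac{(1+\theta)\,\theta^{-\gamma}}{c_1}\cdot\frac{\sup_x\mu(B(x,a\epsilon))}{\inf_x\mu(B(x,a\epsilon))},
\]
and $(1+\theta)\theta^{-\gamma}\to\infty$ as $\theta\downarrow 0$; its infimum over $\theta>0$ is strictly greater than $1$ for every $\gamma\in(0,1)$. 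So the two-stage limit does not recover the constant $1/c_1$ --- contrary to your parenthetical, this case includes one-dimensional Brownian motion and the strongly recurrent fractal examples with $f(t)\asymp t^{1-\alpha/\beta}$. The paper's proof takes the same $T=\delta t$ route (its displayed denominator $\int_0^t$ is evidently a misprint for $\int_0^{\delta t}$), so the difficulty is shared rather than peculiar to your write-up; the argument as it stands delivers the stated constant only when $f$ is slowly varying.
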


\begin{proof}
(i) Let $\delta \in (0,1)$.   
Recall \eqref{non-emp-bdry}.
Applying Lemma \ref{lem-fund} (i) to the case that $\eta = a\epsilon$ and $T = \delta t$, we have that 
\begin{align*}
E_{M}^o [\V] &= \int_{M} P^o (T_{B(y,\epsilon)} \le t) d\mu(y) \\
&\le \int_{M}   \frac{\int_{0}^{(1+\delta)t} P^o (X_s \in B(y, a\epsilon )) ds }{\inf_{w \in \overline{\partial}B(y, \epsilon)}  \int_{0}^{t} P^w (X_s \in B(y, a\epsilon )) ds}   d\mu(y)  \\
&\le  \frac{\int_{M} \int_{0}^{(1+\delta)t}   P^o (X_s \in B(y, a\epsilon)) ds d\mu(y)}{\inf_{y, w \in M; d(y, w) = \epsilon}  \int_{0}^{t} P^w (X_s \in B(y, a\epsilon )) ds} \\
&\le  \frac{\int_{0}^{(1+\delta)t} E^o \left[\mu\left(B(X_s, a\epsilon)\right)\right] ds}{\inf_{y, w \in M; d(y, w) = \epsilon}  \int_{0}^{t} P^w (X_s \in B(y, a\epsilon))ds}. \\
&\le  \frac{(1+\delta)t \sup_{z \in M}  \mu(B(z, a\epsilon))}{\inf_{y, w \in M; d(y, w) = \epsilon}  \int_{0}^{t} P^w (X_s \in B(y, a\epsilon)) ds}. \\
&\le (1+\delta) \frac{\sup_{z \in M}  \mu(B(z, a\epsilon))}{\inf_{z \in M}  \mu(B(z, a\epsilon))} \sup_{w,z \in M; (1-a)\epsilon \le d(w,z) \le (1+a)\epsilon}  \frac{t}{\int_{0}^{t} p(s, w, z)ds}.  \\
&\le \frac{1+\delta}{c_0} \sup_{w,z \in M; (1-a)\epsilon \le d(w,z) \le (1+a)\epsilon}  \frac{t}{\int_{0}^{t} p(s, w, z)ds},
\end{align*} 
where $c_0$ is the constant in \eqref{ass-vg}. 

Recall \eqref{base}. 
Since $\delta$ is taken arbitrarily,  it holds that 
\begin{align}\label{upper-mean} 
E_{M}^o [\V]
\le  \frac{1}{c_0} \sup_{w,z \in M; (1-a)\epsilon \le d(w,z) \le (1+a)\epsilon}  \frac{t}{\int_{0}^{t} p(s, w, z)ds}. 
\end{align}   

On the other hand, by \eqref{ass-growth-hk},   
\[  \limsup_{t \to \infty} \sup_{w,z \in M; (1-a)\epsilon \le d(w,z) \le (1+a)\epsilon} \frac{f(t)}{\int_0^t p(s,w,z) ds} \le \frac{1}{c_1}. \]
By this and \eqref{upper-mean}, 
we have assertion (i). 

(ii) By \eqref{non-emp-bdry}, \eqref{base} and Lemma \ref{lem-fund}, 
\begin{align}\label{lower-mean} 
E^o [\V] &= \int_{M} P^o (T_{B(y,\epsilon)} \le t) d\mu(y) \notag\\ 
&\ge \int_{M} \frac{\int_{0}^{t} P^x ( X_s \in B(y, a\epsilon ) ) ds}{\sup_{w \in \overline{\partial}B(y, \epsilon)}  \int_{0}^{t} P^w ( X_s \in B(y, a\epsilon) )  ds} d\mu(y)  \notag\\ 
&\ge \frac{\int_{0}^{t} \int_{M} \mu\left(B(z, a\epsilon)\right) p(s,x,z) d\mu(z) ds}{\sup_{w,y \in M; d(w, y) = \epsilon} \int_{0}^{t}  P^w ( X_s \in B(y, a\epsilon ) )   ds}  \notag\\
&\ge \frac{t \inf_{z \in M} \mu(B(z, a\epsilon))}{\sup_{w,y \in M; d(w, y) = \epsilon}  \int_{0}^{t}  P^w ( X_s \in B(y, a\epsilon ) )  ds}  \notag\\
&\ge \frac{\inf_{z \in M} \mu(B(z, a\epsilon))}{\sup_{z \in M } \mu(B(z, a\epsilon))} \inf_{w,z \in M; (1-a)\epsilon \le d(w,z) \le (1+a)\epsilon} \frac{t}{\int_0^t p(s,w,z) ds},   
\end{align} 
where in the fourth inequality we have used the assumption that $(\E, \F)$ is conservative.   

By \eqref{ass-growth-hk}, 
\[ \liminf_{t \to \infty} \inf_{w,z \in M; (1-a)\epsilon \le d(w,z) \le (1+a)\epsilon} \frac{f(t)}{\int_0^t p(s,w,z) ds} \ge \frac{1}{c_2}. \]

By this and \eqref{lower-mean}, we have assertion (ii). 
\end{proof}

\begin{proof}[Proof of Theorem \ref{pre-SLLN}]
We first show that $(\V)_t$ is a diffusion. 

\begin{Lem}\label{conti-vol}
For every $x \in M$, $\V$ is continuous with respect to $t$, $P^x$-a.s. 
\end{Lem}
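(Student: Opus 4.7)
The plan is to observe that $A_t := \bigcup_{s \in [0,t]} B(X_s,\epsilon)$ is monotone increasing in $t$, so $V_\epsilon(t) = \mu(A_t)$ is monotone as well, and its continuity reduces to checking that the left- and right-limit set-differences are $\mu$-null. Throughout I will use the fact that $(X_t)$ has continuous sample paths $P^x$-a.s.\ (since $(\E,\F)$ is strongly local and $(X_t, P^x)$ is the associated diffusion), together with the two standing assumptions \eqref{non-emp-bdry} and \eqref{meas-vanish-bdry}.

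For left-continuity, I would fix $t>0$ and $\eta\in(0,\epsilon)$, pick $\delta>0$ such that $d(X_s,X_t)<\eta$ for all $s\in[t-\delta,t)$ (possible by path-continuity), and apply the triangle inequality to obtain $B(X_t,\epsilon-\eta)\subset B(X_s,\epsilon)\subset A_{t^-}$ for such $s$. Taking $\eta\downarrow 0$ and using $B(X_t,\epsilon)=\bigcup_{\eta>0}B(X_t,\epsilon-\eta)$ gives $B(X_t,\epsilon)\subset A_{t^-}$, hence $A_t=A_{t^-}$ and in particular $V_\epsilon(t)=V_\epsilon(t^-)$.

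For right-continuity, fix $t\ge 0$ and $\eta>0$, choose $\delta>0$ so small that $d(X_s,X_t)<\eta$ for $s\in[t,t+\delta]$. Then $B(X_s,\epsilon)\subset B(X_t,\epsilon+\eta)$ for every such $s$, which yields
\[
A_{t+\delta}\setminus A_t \;\subset\; B(X_t,\epsilon+\eta)\setminus B(X_t,\epsilon).
\]
Intersecting over $\delta>0$ and then over $\eta>0$ shows $A_{t^+}\setminus A_t\subset \overline{B}(X_t,\epsilon)\setminus B(X_t,\epsilon)=\overline{\partial}B(X_t,\epsilon)$. Assumption \eqref{meas-vanish-bdry} gives $\mu(\overline{\partial}B(X_t,\epsilon))=0$, so $V_\epsilon(t^+)=V_\epsilon(t)$.

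There is no real obstacle here; the only point requiring a little care is the interchange of the intersections, which is justified because every open ball is relatively compact (so the measures $\mu(B(X_t,\epsilon+\eta))$ are finite and continuity of $\mu$ from above applies). Combining the two steps gives $P^x$-a.s.\ continuity of $t\mapsto V_\epsilon(t)$ for every $x\in M$.
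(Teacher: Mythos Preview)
Your proof is correct and follows essentially the same idea as the paper's: bound the increment of $t\mapsto \mu(A_t)$ by the measure of a shrinking annulus around $X_t$ using path continuity, then invoke \eqref{meas-vanish-bdry}. Your treatment is in fact slightly cleaner on the left side---you obtain the set equality $A_t=A_{t^-}$ directly, so \eqref{meas-vanish-bdry} is only needed for right-continuity, whereas the paper bounds both one-sided increments by annuli and appeals to \eqref{meas-vanish-bdry} for the combined estimate.
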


\begin{proof}
We have that for every $\eta, t > 0$, 
\[ V_{\epsilon}(t+\eta) - V_{\epsilon}(t)  = \mu\left( \bigcup_{s \le t + \eta} B(X_s, \epsilon) \setminus \bigcup_{s \le t} B(X_s, \epsilon)  \right) \]
\[ \le \mu\left(B\left(X_t, \ \epsilon + \sup_{s \in [t, t+\eta]} d(X_t, X_s)\right) \setminus B\left(X_t, \epsilon\right)\right), \]
and, 
\[ V_{\epsilon}(t) - V_{\epsilon}(t-\eta)  = \mu \left( \bigcup_{s \le t} B(X_s, \epsilon) \setminus \bigcup_{s \le t - \eta} B(X_s, \epsilon) \right) \]
\[ \le \mu\left(B\left(X_t, \epsilon + \sup_{s \in [t-\eta, t]} d (X_t, X_s) \right) \setminus B\left(X_t, \epsilon - \sup_{s \in [t-3\eta, t-2\eta]} d(X_t, X_s) \right) \right), \]
where we adopt the notation that $B(x, r) := \emptyset$ for $r \le 0$.

Therefore,
\[ V_{\epsilon}(t+\eta) - V_{\epsilon}(t-\eta) \]
\[\le 2\mu\left(B\left(X_t, \ \epsilon + \sup_{s \in [t-\eta, t+\eta]} d (X_t, X_s)\right) \setminus B\left(X_t, \epsilon - \sup_{s \in [t-3\eta, t]} d(X_t, X_s)\right)\right). \]
Since $(X_t)_t$ is a diffusion, we have that $P^x$-a.s., for each $t > 0$, 
\[ \lim_{\eta \to 0} \sup_{s \in [t - 3\eta, t+\eta]} d (X_t, X_s) = 0. \]

Hence, we have that $P^x$-a.s., for every $t > 0$, 
\[ \limsup_{\eta \to 0} \mu\left(B\left(X_t, \ \epsilon + \sup_{s \in [t - \eta, t+\eta]} d (X_t, X_s)\right) \setminus B\left(X_t, \epsilon - \sup_{s \in [t-3\eta, t]} d(X_t, X_s) \right)\right) \]
\[\le  \mu\left(\overline{\partial}B\left(X_t, \epsilon\right)\right).  \]
By recalling \eqref{meas-vanish-bdry}, 
\[  \mu\left(\overline{\partial}B\left(X_t, \epsilon\right)\right) = 0.\] 
Hence, we have that $P^x$-a.s., for each $t > 0$, 
\[ \limsup_{\eta \to 0}  V_{\epsilon}(t+\eta) - V_{\epsilon}(t-\eta) \le 0. \]
By noting that $\V$ is non-decreasing with respect to $t$,  we have the assertion. 
\end{proof}

\begin{Rem}
We are not sure whether Lemma \ref{conti-vol} holds without \eqref{meas-vanish-bdry}. 
\end{Rem}

\begin{Prop}[{\cite[Theorem 3.1]{BK00}, \cite[Proposition A.2]{KKW17}}]\label{KKW-subadditive}
Let $X$ be a strong Markov process on $(M, d, \mu)$. 
Assume that $(H_t)_t$ is a continuous adapted non-decreasing functional of $X$ satisfying the following conditions.\\
(1) There exists an increasing function $\varphi$ on $(0,\infty)$ and a constant $c > 1$ such that 
\[ \lim_{b \to \infty} \sup_{x \in M, t > 0} P^x \left(H_t \ge b \varphi(t)\right) = 0,  \]
and, 
\[ \varphi(2r) \le c\varphi(r), \ \forall r > 0. \]
(2) \[ H_t \le H_s + H_{t-s} \circ \theta_s, \ t \ge s > 0. \]
Then, there exists a constant $C \in (0,\infty)$ such that for every $x \in M$, 
\[ \limsup_{t \to \infty} \frac{H_t}{\varphi(t/\log\log t) \log\log t} \le C, \textup{$P^x$-a.s.} \] 
\end{Prop}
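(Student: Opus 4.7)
\textbf{Proof plan for Proposition \ref{KKW-subadditive}.} The strategy is the classical Borel--Cantelli argument for a subadditive functional adapted to the (conditionally) strong Markov setting, along the lines of the references \cite{BK00, KKW17}. First, by condition (1) I would fix $b_0$ so large that
\[
p_0 := \sup_{x \in M,\, t > 0} P^x\bigl(H_t \ge b_0 \varphi(t)\bigr) \le \tfrac{1}{4}.
\]
The uniformity of this bound in both $x$ and $t$ is the crucial input, and is what allows repeated applications of the strong Markov property.

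Next, along the geometric subsequence $t_m := 2^m$, set $N_m := \lceil \log\log t_m\rceil$ and $s_m := t_m/N_m$. Iterating subadditivity in (2) yields
\[
H_{t_m} \le \sum_{k=0}^{N_m-1} H_{s_m} \circ \theta_{k s_m},
\]
and by the strong Markov property, the event $B_k := \{H_{s_m} \circ \theta_{k s_m} \ge b_0 \varphi(s_m)\}$ satisfies $P^x(B_k \mid \mathcal F_{k s_m}) \le p_0$. Thus $(\mathbf 1_{B_k})_k$ is stochastically dominated by an i.i.d.\ Bernoulli$(p_0)$ sequence, so the count of ``bad'' blocks obeys a Chernoff tail bound. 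A dyadic decomposition of the excess on bad blocks, invoking condition (1) at the successive thresholds $2^j b_0 \varphi(s_m)$ and applying the same Bernoulli-domination at each level, then yields the hierarchical estimate
\[
H_{t_m} \le C\, N_m\, \varphi(s_m) = C\, \varphi(t_m/\log\log t_m)\,\log\log t_m
\]
outside an event of probability summable in $m$, where $C$ depends only on $p_0$ and the doubling constant of $\varphi$.

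Borel--Cantelli gives the sequential bound along $(t_m)$, and transfer to all $t$ uses monotonicity of $H$ together with the doubling property $\varphi(2r) \le c\varphi(r)$: for $t \in [t_m, t_{m+1}]$ one has $H_t \le H_{t_{m+1}}$ while $\varphi(t_{m+1}/\log\log t_{m+1}) \le c'\,\varphi(t/\log\log t)$, and the slow growth of $\log\log$ absorbs the remaining factor. The continuity hypothesis on $H$ is used only to ensure that the sequential conclusion transfers to the continuous parameter.

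The main obstacle is the tail-control step. Condition (1) supplies only uniform tightness of $H_t/\varphi(t)$ and no quantitative decay rate, so the dyadic level-by-level decomposition of contributions from the bad blocks must be combined carefully with the Chernoff-type bound on the number of such blocks at each level to produce a constant $C$ that is \emph{uniform} in $m$, rather than one that blows up like $\log\log t_m$ or worse. Without this refinement, the naive union bound on bad blocks gives only $\limsup = \infty$, so a hierarchical truncation is essential.
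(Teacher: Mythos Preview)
Your overall architecture (split $[0,t]$ into roughly $\log\log t$ blocks via subadditivity, use the strong Markov property on each block, Borel--Cantelli along a geometric sequence, then transfer via monotonicity and doubling) matches the paper. The gap is exactly where you flag it: you identify that condition (1) gives only uniform tightness with no decay rate, but your proposed ``hierarchical truncation'' does not close. Bernoulli domination at dyadic levels $2^j b_0\varphi(s_m)$ yields block probabilities $p_j\to 0$ with no rate in $j$, so $\sum_j 2^j K_j$ cannot be bounded by $C N_m$ uniformly in $m$; Chernoff on the counts $K_j$ does not compensate for the lack of decay of $p_j$.

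The paper resolves this by a stopping-time bootstrap that upgrades tightness to an \emph{exponential} tail. Setting $B_{t,s}=H_{st/[\log\log t]}/\varphi(t/\log\log t)$ and $T_n=\inf\{s:B_{t,s}\ge bn\}$, continuity of $H$ forces $B_{t,T_n}=bn$ exactly (no overshoot), so the strong Markov property at $T_n$ gives
\[
\sup_{x,t}P^x(B_{t,1}>b(n+1))\le \tfrac12\,\sup_{x,t}P^x(B_{t,1}>bn),
\]
hence $P^x(B_{t,1}>bn)\le 2^{-n}$ and a uniform exponential moment $\sup_{x,t}E^x[e^{aB_{t,1}}]<\infty$. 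The block decomposition then finishes with a one-line exponential Chebyshev plus Markov property, no dyadic hierarchy needed. Note in particular that your statement ``continuity is used only to transfer the sequential conclusion to continuous $t$'' is wrong: continuity is the essential ingredient in the no-overshoot step; the transfer step uses only monotonicity of $H$ and doubling of $\varphi$.
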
 

\begin{proof}
The proof is same as in the proof of \cite[Theorem 3.1]{BK00}, 
however, since we use a formula in the proof of \cite[Theorem 3.1]{BK00} below, we write down the details.   

We denote the largest integer of a real number $z$ which is less than or equal to $z$ by $[z]$. 
For $t > 0$ and $s \in [0,1]$, we let 
\[ B_{t,s} := \frac{H_{s t/[\log\log t]}}{\varphi(t/\log\log t)}. \]
By assumption (1), there exists a constant $b > 0$ such that 
\[ \sup_{t > 0, x \in M} P^x (B_{t,1} > b) \le \frac{1}{2}. \]
For $n \ge 1$, we let 
\[ T_n := \inf\left\{s \ge 0 : B_{t,s} \ge bn\right\}. \]
Then by the continuity of $H_t$, 
we have that $B_{T_n} = bn$ if $T_n < +\infty$, 
\[ P^x (B_{t,1} > b(n+1)) = P^x \left(B_{t,1} - B_{t, T_n} > b, T_n < 1\right). \]
By the strong Markov property of $X$ and the assumption that $H_t$ is adapted and non-decreasing, 
\begin{align*} 
P^x (B_{t,1} - B_{t, T_n} > b, T_n < 1) &\le E^x \left[P^{X_{T_n}} (B_{t,1} > b), T_n < 1\right] \\
&\le \frac{1}{2} \sup_{y \in M} P^y (B_{t,1} > bn). 
\end{align*} 

Hence, for each $n \ge 1$, 
\[ \sup_{t > 0, x \in M} P^x (B_{t,1} > bn) \le \frac{1}{2^n}.\]

Hence there exists a constant $a > 0$ such that 
\[ \sup_{x \in M, t > 0} E^x \left[\exp(a B_{t,1})\right] < +\infty. \]

We have that for every $x \in M$, 
\[ P^x \left(  \frac{H_t}{\varphi(t/\log\log t) [\log\log t]} \ge \lambda \right)  = P^x \left( \sum_{i = 1}^{[\log\log t]} (B_{t, i} - B_{t, i-1}) \ge  [\log\log t] \lambda \right)   \] 
\[ \le \exp\left(-a\lambda  [\log\log t] \right) E^x \left[ \prod_{i = 1}^{[\log\log t]} \exp\left( a (B_{t, i} - B_{t, i-1})  \right)\right]    \] 
\begin{equation}\label{supply} 
\le \left( \exp(-a\lambda) \sup_{y \in M, u > 0} E^y \left[\exp(a B_{u,1})\right] \right)^{[\log\log t]}, 
\end{equation}
where we have used the exponential Chebyshev's inequality in the first inequality and the Markov property of $(X_t)_t$ and assumption (2) in the second inequality. 

If we take a sufficiently large $\lambda_0 > 0$, then, there exists a constant $p > 1$ such that for every $x$ and every $t$, 
\[ P^x \left(  \frac{H_t}{\varphi(t/\log\log t) [\log\log t]} \ge \lambda_0 \right) \le \exp(-p [\log\log t]). \]
By this and the Borel-Cantelli lemma, we have that for every $x \in M$, 
\[ \limsup_{k \to \infty}  \frac{H_{\exp(k)}}{\varphi(\exp(k)/\log k) [\log k]} \le \lambda_0,  \textup{ $P^x$-a.s.}  \]
By this, the doubling property for $\varphi$ in assumption (1), and the assumption that $H_t$ is non-decreasing, we have the assertion. 
\end{proof}

Now we return to the proof of Theorem \ref{pre-SLLN}. 
By Lemma \ref{conti-vol}, $\{\V\}_{t \ge 0}$ is a diffusion.  
In the rest of this proof, we let $f$ be the function given by \eqref{def-f}. 
By applying Proposition \ref{KKW-subadditive} to the case that $H_t = \V$ and $\varphi(t) = t/f(t)$, 
it holds that there exists a positive non-random constant $C_0$ such that for every $x \in M$, 
\begin{equation}\label{upper-LIL-fund} 
\limsup_{t \to \infty} \frac{\V}{t/f(t/\log\log t)} \le C_0, \textup{ $P^x$-a.s.} 
\end{equation}
By Theorem \ref{tail-01} in Appendix, we have \eqref{non-str-rec-sup} for some non-negative constant $c_2$.    
 
We now show \eqref{non-str-rec-inf}.  
Thanks to $\textup{Vol}(V; \alpha_1, \alpha_2)$, we can apply Theorem \ref{318-KKW} in Appendix and we have that 
there exist two positive non-random constants $C_1$ and $C_2$ such that for every $x \in M$, 
\begin{align*}
 \liminf_{t \to \infty} \frac{\V}{V(\phi^{-1}(t/\log\log t))} &\le  \liminf_{t \to \infty} \frac{V\left(\epsilon + \sup_{s \le t} d(x, X_s) \right)}{V(\phi^{-1}(t/\log\log t))} \\
&\le  \liminf_{t \to \infty} 1+ \frac{C_1 \left(\epsilon^{\alpha_2} + \left(\sup_{s \le t} d(x, X_s)\right)^{\alpha_2}\right)}{\phi^{-1}(t/\log\log t)^{\alpha_2}} \\
&\le C_2 < +\infty,  \textup{ $P^x$-a.s.} 
\end{align*} 
By this, \eqref{upper-LIL-fund} and Theorem \ref{tail-01} in Appendix, we have \eqref{non-str-rec-inf}. 
Thus the proof of (i) is completed.

We show (ii). 
Let $\varphi(t) := t/f(t)$. 
Assume that $c_2 = 0$. 
Then, 
\begin{equation}\label{contradict-SLLN} 
\lim_{t \to \infty} \frac{\V}{\varphi(t/\log\log t) \log\log t} = 0, \textup{ $P^x$-a.s.}  
\end{equation}  

By Fubini's theorem and \eqref{supply}, we have that for  $\lambda_1 > 0$, 
\[ E^x \left[ \frac{\V}{\varphi(t/\log\log t) \log\log t}, \  \frac{\V}{\varphi(t/\log\log t) [\log\log t]}  \ge \lambda_1\right] \] 
\[= \int_{\lambda_1}^{\infty} P^x \left( \frac{\V}{\varphi(t/\log\log t) [\log\log t]}  \ge \lambda \right) d\lambda + \lambda_1 P^x \left( \frac{\V}{\varphi(t/\log\log t) [\log\log t]}  \ge \lambda_1 \right) \] 
\[ \le \int_{\lambda_1}^{\infty}  \left( \exp(-a\lambda) \sup_{y \in M, u > 0} E^y \left[\exp(a B_{u,1})\right] \right)^{[\log\log t]} d\lambda \]
\[+ \lambda_1 \left( \exp(-a\lambda_1) \sup_{y \in M, u > 0} E^y \left[\exp(a B_{u,1})\right] \right)^{[\log\log t]}\] 
\[ = (\frac{1}{a [\log\log t]} + \lambda_1) \left( \exp(-a\lambda_{1}) \sup_{y \in M, u > 0} E^y \left[\exp(a B_{u,1})\right] \right)^{[\log\log t]}. \] 
Hence we have that for sufficiently large $\lambda_1 > 0$, 
\begin{equation}\label{contradict-upper-small} 
\lim_{t \to \infty} E^x \left[ \frac{\V}{\varphi(t/\log\log t) \log\log t}, \  \frac{\V}{\varphi(t/\log\log t) [\log\log t]}  \ge \lambda_1\right] = 0. 
\end{equation} 
For every $\delta \in (0, \lambda_1)$, 
\[ E^x \left[ \frac{\V}{\varphi(t/\log\log t) \log\log t}, \  \frac{\V}{\varphi(t/\log\log t) [\log\log t]} < \lambda_1\right] \]
\[\le \delta + \lambda_1 P^x \left(  \frac{\V}{\varphi(t/\log\log t) [\log\log t]} \ge \delta \right).  \]
By this and \eqref{contradict-SLLN}, 
\[ \limsup_{t \to \infty} E^x \left[ \frac{\V}{\varphi(t/\log\log t) \log\log t}, \  \frac{\V}{\varphi(t/\log\log t) [\log\log t]} < \lambda_1\right] \le \delta. \] 
Since $\delta$ can be taken arbitrarily close to $0$, 
\[ \limsup_{t \to \infty} E^x \left[ \frac{\V}{\varphi(t/\log\log t) \log\log t}, \  \frac{\V}{\varphi(t/\log\log t) [\log\log t]} < \lambda_1\right] = 0. \] 
By this and \eqref{contradict-upper-small}, 
\[ \lim_{t \to \infty} \frac{E^x \left[ \V \right]}{t/f(t/\log\log t)} = \lim_{t \to \infty} \frac{E^x \left[ \V \right]}{\varphi(t/\log\log t) \log\log t} = 0. \] 
If \eqref{ratio} holds, then this convergence contradicts Theorem \ref{thm-fractal}. 
Hence $c_2 > 0$. 
Thus the proof of (ii) is completed.

We show (iii).  
Let 
\[ X[0,t] := \{X_s : s \in [0,t]\}, \ t > 0. \]
Let
\[ V_{0+}(t) := \mu\left(X[0,t]\right) = \lim_{\epsilon \to 0} \V. \]
This is the volume of the range of $X$. 

By Theorem \ref{411415} in Appendix, there exists a positive constant $c_1^{\prime}$ such that  for every $x \in M$, 
\[ \liminf_{t \to \infty} \frac{\mu(X[0,t])}{V(\phi^{-1}(t/\log\log t))} = c_1^{\prime}, \textup{ $P^x$-a.s.} \]
Hence $c_1 \ge c_1^{\prime} > 0$.  

Finally we show that $c_2 > 0$.
\[ c_2 = \limsup_{t \to \infty} \frac{\V}{V(\phi^{-1}(t/\log\log t))\log\log t} \ge  \limsup_{t \to \infty} \frac{\mu(X[0,t])}{V(\phi^{-1}(t/\log\log t))\log\log t}, \textup{ $P^x$-a.s.} \]
By  Theorem \ref{411415} in Appendix,  
there exists a positive constant $c_2^{\prime}$ such that  for every $x \in M$, 
\[ \limsup_{t \to \infty} \frac{\mu(X[0,t])}{V(\phi^{-1}(t/\log\log t)) \log\log t } = c_2^{\prime}, \textup{ $P^x$-a.s.} \]
Hence $c_2 > 0$. 
Thus the proof of Theorem \ref{pre-SLLN} is completed. 
\end{proof}

\subsection{Examples}

In this subsection we give examples to which Theorems \ref{thm-fractal} and \ref{pre-SLLN} are applicable. 

We first consider that case that $\textup{Vol}(\alpha)$ and FHK$(\beta)$ hold for some $\alpha, \beta > 1$.  
By Theorem \ref{pre-SLLN}, we have the following: 
\begin{Cor}
Assume that Ahlfors regularity $\textup{Vol}(\alpha)$ and full heat kernel estimate FHK$(\beta)$ hold. 
Let $f$ be the function defined as follows: 
\begin{equation*}\label{def-f-ab}
f(t)= \left\{
\begin{array}{l}
1 \ \ \ \ \ \ \ \ \ \ \alpha > \beta,\\
\log t  \ \ \ \ \ \ \alpha = \beta,\\
 t^{1- \alpha / \beta} \ \ \ \alpha < \beta.
\end{array}
\right.
\end{equation*}
Then,\\
(i) (transient or weakly recurrent cases) 
If $\alpha \ge \beta$, then, there exist two constants $c_1 \in [0, \infty)$ and $c_2 \in (0, \infty)$ depending on $\epsilon$ such that for every $x \in M$, 
\begin{equation*}\label{non-str-rec} 
c_1 = \liminf_{t \to \infty} \frac{\V}{t / f(t)} \le \limsup_{t \to \infty} \frac{\V}{t / f(t)} = c_2, \textup{ $P^x$-a.s.} 
\end{equation*} 
(ii) (strongly recurrent case) If $\alpha < \beta$, then, there exist two constants $c_3, c_4 \in (0, \infty)$ depending on $\epsilon$ such that for every $x \in M$, 
\begin{equation*}\label{str-rec-lower} 
\liminf_{t \to \infty} \frac{\V}{(t/\log\log t)^{\alpha/\beta}} = c_3, \textup{ $P^x$-a.s.,} 
\end{equation*} 
and, 
\begin{equation*}\label{str-rec-upper} 
\limsup_{t \to \infty} \frac{\V}{t^{\alpha/\beta} (\log\log t)^{1-\alpha/\beta}} = c_4, \textup{ $P^x$-a.s.} 
\end{equation*} 
\end{Cor}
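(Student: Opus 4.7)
The plan is to deduce this Corollary directly from Theorem \ref{pre-SLLN} by computing the function $f$ from \eqref{def-f} explicitly. Ahlfors regularity $\textup{Vol}(\alpha)$ is exactly $\textup{Vol}(V;\alpha,\alpha)$ with $V(r)=r^{\alpha}$, and $\textup{FHK}(\beta)$ is $\textup{FHK}(\phi;\beta,\beta)$ with $\phi(r)=r^{\beta}$; the latter implies $\textup{HK}(\phi;\beta,\beta)$ as noted right after the definition of $\textup{FHK}$. Hence $V(\phi^{-1}(s))\asymp s^{\alpha/\beta}$ and
\[ f(t)\asymp \int_1^t s^{-\alpha/\beta}\,ds, \]
which is bounded when $\alpha>\beta$, of order $\log t$ when $\alpha=\beta$, and of order $t^{1-\alpha/\beta}$ when $\alpha<\beta$. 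This reproduces the three forms of $f$ in the statement up to multiplicative constants that will be absorbed into the $c_i$.

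For part (i) ($\alpha\ge\beta$), I first check the ratio condition \eqref{ratio}: for $\alpha>\beta$, both $f(t)$ and $f(t/\log\log t)$ converge to the same positive limit, and for $\alpha=\beta$, $f(t/\log\log t)/f(t)=(\log t-\log\log\log t)/\log t\to 1$. In particular $f(t)\sim f(t/\log\log t)$, so Theorem \ref{pre-SLLN}(ii) yields $c_2>0$ and the limsup denominator $t/f(t/\log\log t)$ in \eqref{non-str-rec-sup} can be replaced by $t/f(t)$ without changing the value of the limit. For the liminf, a short computation shows that in this regime $V(\phi^{-1}(t/\log\log t))=(t/\log\log t)^{\alpha/\beta}$ is of strictly larger order than $t/f(t/\log\log t)$ (the ratio tends to $+\infty$), so the $\min$ in \eqref{non-str-rec-inf} equals $t/f(t/\log\log t)\sim t/f(t)$, and the Corollary's liminf formula follows.

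For part (ii) ($\alpha<\beta$), the same comparison goes the other way: $V(\phi^{-1}(t/\log\log t))=(t/\log\log t)^{\alpha/\beta}$ and $t/f(t/\log\log t)\asymp t^{\alpha/\beta}(\log\log t)^{1-\alpha/\beta}$ differ by a factor $(\log\log t)^{-1}\to 0$, so the $\min$ is the former, matching the claimed liminf denominator $(t/\log\log t)^{\alpha/\beta}$, while the limsup denominator matches $t/f(t/\log\log t)$. Since Theorem \ref{pre-SLLN}(iii) requires $\textup{FHK}(\phi;\beta_1,\beta_2)$ together with $\alpha_2<\beta_1$, which here read $\textup{FHK}(\beta)$ and $\alpha<\beta$, that part applies and gives $c_3,c_4\in(0,\infty)$. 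The only real bookkeeping lies in the asymptotic equivalences between $f(t)$ and $f(t/\log\log t)$ and in identifying which of the two quantities inside the $\min$ dominates in each regime, but nothing beyond these elementary comparisons is required.
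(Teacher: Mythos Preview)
Your proposal is correct and follows exactly the route the paper takes: the paper simply states ``By Theorem \ref{pre-SLLN}, we have the following'' before the Corollary, leaving the elementary computations of $f$, the verification of \eqref{ratio}, and the identification of the dominant term in the $\min$ implicit. You have supplied precisely those details, and nothing further is needed.
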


For example, by Barlow-Bass \cite{BB92}, assertion (i) is applicable to Brownian motion on the $d$-dimensional standard Sierpinski carpets  for $d \ge 3$. 
By Barlow-Bass \cite{BB99}, it is also applicable to the standard Menger sponges for $d \ge 3$. 
By Barlow-Perkins \cite{BP88}, assertion (ii) is applicable to Brownian motion on the $d$-dimensional standard Sierpinski gaskets for  $d \ge 2$.

Theorems \ref{thm-fractal} and \ref{pre-SLLN} are also applicable to several models under assumptions weaker than $\textup{Vol}(\alpha)$ and FHK$(\beta)$. 
Barlow-Bass \cite{BB00} deals with the pre-carpet, which is a fractal-like manifold. 
This case corresponds to the case that $V(r) \simeq r^{d}$ and $\phi(r) \simeq r^{d_w}$ as $r \to 0$, and $V(r) \simeq r^{d_f}$ and $\phi(r) \simeq r^{2}$ as $r \to \infty$, where the constants $d_f$ and $d_{w}$ are called fractal dimension and walk dimension respectively. 
It holds that $d_f < d$ and $2 < d_w$.  
By \cite[Theorem 5.3]{BB00}, 
FHK$(\phi; 2, d_w)$ holds for a function $\phi$ satisfying \eqref{phi-beta}. 
Therefore, Theorem \ref{pre-SLLN} (i) is applicable to this case. 
We have that $f(t) \asymp t^{d_f / 2}$ if $d_f < 2$ and $f(t) \asymp 1$ if $d_f > 2$. 

If $d_f > 2$, then, 
\[ \frac{t}{f(t/\log\log t)} \asymp t, \ \ V(\phi^{-1}(t/\log\log t)) \asymp \left(\frac{t}{\log\log t}\right)^{d_f /2}, \ \ \  t \to \infty, \]
and hence, there exist two (non-random) constants $c_1$ and $c_2$  such that for every $x \in M$, 
\[ 0 \le c_1 = \liminf_{t \to \infty} \frac{\V}{t} \le  \limsup_{t \to \infty} \frac{\V}{t} = c_2 < +\infty, \textup{$P^x$-a.s.} \]
If $d_f = 2$, then, 
\[ \frac{t}{f(t/\log\log t)} \asymp \frac{t}{\log t}, \ \ V(\phi^{-1}(t/\log\log t)) \asymp \frac{t}{\log\log t}, \ \ \  t \to \infty, \]
and hence, there exist two (non-random) constants $c_1$ and $c_2$  such that for every $x \in M$, 
\[ 0 \le c_1 = \liminf_{t \to \infty} \frac{\V}{t/\log t} \le  \limsup_{t \to \infty} \frac{\V}{t/\log t} = c_2 < +\infty, \textup{$P^x$-a.s.} \]
If $d_f < 2$, then, 
\[ \frac{t}{f(t/\log\log t)} \asymp t^{d_f /2} (\log \log t)^{1 - d_f/2},  \ \ V(\phi^{-1}(t/\log\log t)) \asymp \left(\frac{t}{\log\log t}\right)^{d_f /2}, \ \ \  t \to \infty. \]
and hence, there exist two (non-random) constants $c_1$ and $c_2$  such that for every $x \in M$, 
\[  \liminf_{t \to \infty} \frac{\V}{t^{d_f /2} (\log\log t)^{-d_f /2}} = c_1, \ \textup{$P^x$-a.s.} \]
\[ \limsup_{t \to \infty} \frac{\V}{t^{d_f /2} (\log \log t)^{1 - d_f/2}} = c_2, \ \textup{$P^x$-a.s.}  \]
However, in each of the above cases, we are not sure whether $c_1$ or $c_2$ is positive or not.  

Grigor'yan and Saloff-Coste  \cite[Subsection 4.3]{GSC05} deals with the case that it {\it fails} that there exists a constant $c > 1$ such that $c^{-1}V(r) \le \mu(B(x,r)) \le cV(r)$ holds for every $x \in M$ and $r > 0$. 
In \cite[Subsection 4.3]{GSC05}, a class of radially symmetric weighted manifolds are considered.  
It is interesting to investigate their case, however, our method is not applicable to their case in direct manners. 
We also consider radially symmetric Riemannian manifolds in Section 4 below, however, our manifolds are different from those in \cite[Subsection 4.3]{GSC05}. 

\subsection{Remark about radial asymptotic}

We give a remark about radial asymptotic of $\V$ as $\epsilon \to +0$. 

\begin{Prop}[Radial asymptotic]
(i) If 
\begin{equation}\label{diverge-int} 
 \int_0^{1} \frac{ds}{V(\phi^{-1}(s))} = +\infty, 
\end{equation}  
then, for every $t > 0$ and every $x \in M$, $V_{0+}(t) = 0$, $P^x$-a.s.\\
(ii) If 
\begin{equation}\label{converge-int}
\int_0^{1} \frac{ds}{V(\phi^{-1}(s))} < +\infty, 
\end{equation} 
then, for every $t > 0$ and every $x \in M$, $V_{0+}(t) > 0$, $P^x$-a.s. 
\end{Prop}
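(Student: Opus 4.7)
The plan is to reduce to hitting probabilities of singletons via Fubini and the continuity of sample paths. Since $\{T_{B(y,\epsilon)}\le t\}\downarrow\{T_y\le t\}$ as $\epsilon\to 0$ (by continuity of $X$ and compactness of $[0,t]$), monotone convergence gives
\[ E^x[V_{0+}(t)]=\int_M P^x(T_y\le t)\,d\mu(y). \]
Set $g_t(x,y):=\int_0^t p(s,x,y)\,ds$. A direct limit argument in Lemma~\ref{lem-fund}, letting the ball radius shrink to $0$ and dividing by $\mu(B(y,\eta))$ while $\eta\to 0$ as well, produces the two-sided estimate
\[ \frac{g_t(x,y)}{g_t(y,y)}\;\le\;P^x(T_y\le t)\;\le\;\frac{g_{2t}(x,y)}{g_t(y,y)}, \]
valid whenever $g_t(y,y)\in(0,\infty]$. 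Under $\mathrm{HK}(\phi;\beta_1,\beta_2)$ and $\mathrm{Vol}(V;\alpha_1,\alpha_2)$ the on-diagonal value satisfies $p(s,y,y)\asymp 1/V(\phi^{-1}(s))$ uniformly in $y$, so $g_t(y,y)\asymp\int_0^t ds/V(\phi^{-1}(s))$ uniformly in $y$.

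For (i), the divergent integral makes $g_t(y,y)=+\infty$ for every $y$ and every $t>0$, while conservativeness of $X$ gives $\int_M g_{2t}(x,y)\,d\mu(y)=2t<\infty$, so $g_{2t}(x,\cdot)$ is finite $\mu$-a.e. The upper sandwich then forces $P^x(T_y\le t)=0$ for $\mu$-a.e.\ $y$, whence $E^x[V_{0+}(t)]=0$ and $V_{0+}(t)=0$ $P^x$-a.s.

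For (ii), write $\underline g_t:=\inf_y g_t(y,y)$ and $\overline g_t:=\sup_y g_t(y,y)$; the uniform asymptotic makes both finite and positive, with $\underline g_t/\overline g_t\ge c_*>0$ for a constant $c_*$ independent of $t$. The lower sandwich together with $\int_M g_t(x,y)\,d\mu(y)=t$ gives the first-moment bound $E^x[V_{0+}(t)]\ge t/\overline g_t$. For the second moment, decompose $\{T_y\le t,\,T_{y'}\le t\}$ according to which hitting time comes first, apply strong Markov at the smaller, then invoke the upper sandwich bound on each factor:
\[ E^x[V_{0+}(t)^2]\le 2\int\!\!\int \frac{g_{2t}(x,y)\,g_{2t}(y,y')}{g_t(y,y)\,g_t(y',y')}\,d\mu(y)\,d\mu(y')\le \frac{8\,t^2}{\underline g_t^{\,2}}, \]
where the last step uses $\int g_{2t}(y,y')\,d\mu(y')=2t$ twice. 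Paley--Zygmund now gives
\[ P^x(V_{0+}(t)>0)\ge \frac{(E^x[V_{0+}(t)])^2}{E^x[V_{0+}(t)^2]}\ge \frac{\underline g_t^{\,2}}{8\,\overline g_t^{\,2}}\ge \frac{c_*^{\,2}}{8}=:c_0>0, \]
uniformly in $x$ and $t$.

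The principal obstacle is upgrading this uniform positive probability to almost-sure positivity. Monotonicity of $V_{0+}$ makes the event $\{V_{0+}(t)>0\ \forall t>0\}$ equal to $\bigcap_n\{V_{0+}(1/n)>0\}$, a decreasing intersection of events in $\sigma(X_s:s\le 1/n)$, so this event lies in the germ $\sigma$-algebra at time $0$. Blumenthal's $0$-$1$ law forces its probability to be $0$ or $1$, and continuity from above of $P^x$ along the decreasing sequence, combined with the uniform lower bound $c_0$, forces the value $1$. Monotonicity then gives $P^x(V_{0+}(t)>0)=1$ for every $t>0$, completing (ii).
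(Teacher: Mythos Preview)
Your argument is correct, and for part (ii) it takes a genuinely different route from the paper.

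For part (i) both proofs reach the same conclusion $E^x[V_{0+}(t)]=0$, but the paper gets there more directly: it applies the already established mean bound \eqref{upper-mean} to $V_\epsilon(t)$, observes via the near-diagonal lower heat kernel estimate that the denominator $\inf_{d(w,z)\le 3\epsilon/2}\int_0^t p(s,w,z)\,ds$ diverges as $\epsilon\to 0$, and concludes by monotone convergence. Your detour through the pointwise sandwich $P^x(T_y\le t)\le g_{2t}(x,y)/g_t(y,y)$ is valid in the end, but the limit $\epsilon,\eta\to 0$ in Lemma~\ref{lem-fund} is a little delicate precisely because $g_t(y,y)=+\infty$ here; what one really obtains from the limit is $P^x(T_y\le t)\cdot\bigl(c\int_0^t ds/V(\phi^{-1}(s))\bigr)\le g_{2t}(x,y)$, which suffices. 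The paper sidesteps this by never leaving the ball formulation.

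For part (ii) the approaches are distinct. The paper simply invokes the existence of a jointly measurable local time $\ell$ (available once \eqref{converge-int} holds) and reads off positivity from the occupation identity
\[
0<t=\int_0^t \mathbf 1_{X[0,t]}(X_s)\,ds=\int_{X[0,t]}\ell(x,t)\,\mu(dx),
\]
which forces $\mu(X[0,t])>0$ in one line. Your argument instead combines a first/second-moment computation with Paley--Zygmund to get a uniform lower bound $P^x(V_{0+}(t)>0)\ge c_0>0$, and then upgrades to probability one via Blumenthal's $0$--$1$ law applied to the germ event $\bigcap_n\{V_{0+}(1/n)>0\}$. The local-time proof is shorter and more conceptual; your proof has the merit of avoiding any regularity of local time, and the second-moment machinery would transfer to situations where a good local time is not available.
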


In particular,  
if $\textup{Vol}(\alpha)$ and FHK$(\beta)$ hold and $\alpha \ge \beta$, then, $V_{0+}(t) = 0$, $P^x$-a.s., and if $\textup{Vol}(\alpha)$ and FHK$(\beta)$ hold and $\alpha < \beta$, then, $V_{0+}(t) > 0$, $P^x$-a.s. 

\begin{proof}
(i) Let $\epsilon > 0$. 
By HK$(\phi; \beta_1, \beta_2)$,  
\[ \int_0^t p(s,w,z) ds \ge \int_{\min\{t, c_6^{-1}\phi(d(w,z))\}}^{t} \frac{ds}{V(\phi^{-1}(s))}. \]
Hence, 
\begin{equation}\label{ineq-epsilon} 
\inf_{w,z \in M; d(w,z) \le 3\epsilon/2} \int_0^t p(s,w,z) ds \ge \int_{\min\{t, c_6^{-1}\phi(3\epsilon/2)\}}^{t} \frac{ds}{V(\phi^{-1}(s))}. 
\end{equation}   

By \eqref{phi-beta}, $\lim_{s \to 0+} \phi(0) = 0$. 
We recall that $\phi$ is an increasing function.  
By these properties and the monotone convergence theorem, 
\[ \lim_{\epsilon \to 0} \int_{\min\{t, c_6^{-1}\phi(3\epsilon/2)\}}^{t} \frac{ds}{V(\phi^{-1}(s))} = \int_{0}^{t} \frac{ds}{V(\phi^{-1}(s))}. \]
By this, \eqref{ineq-epsilon} and \eqref{diverge-int}, 
\[ \lim_{\epsilon \to 0}  \inf_{w,z \in M; d(w,z) \le 3\epsilon/2}  \int_0^t p(s,w,z) ds = \lim_{\epsilon \to 0} \int_{\min\{t, c_6^{-1}\phi(3\epsilon/2)\}}^{t} \frac{ds}{V(\phi^{-1}(s))} = +\infty. \]
By using this and applying \eqref{upper-mean} to the case that $a = 1/2$, 
it follows from the Lebesgue convergence theorem that for every $x \in M$, 
\[ \lim_{\epsilon \to 0+}  E^x [\V] = 0. \]
By this and the monotone convergence theorem, 
\[ E^x [V_{0+}(t)]  = 0. \]

(ii) In the same manner as in the proof of  \cite[Proposition 4.3]{KKW17}, 
we see that by \eqref{converge-int},  
the local time of $X$ exists, specifically, there exists a random field $\ell (x,t)(\omega)$ such that 
$\ell (x,t)(\omega)$ is jointly measurable with respect to $(t,x,\omega)$, and 
\[ \int_0^t h(X_s) ds = \int_{M} h(x) \ell (x,t) \mu(dx) \]
holds for every $T > 0$ and every Borel measurable function $h$ on $M$. 
Hence, 
\[ 0 < t = \int_{0}^{t} 1_{X[0,t]}(X_s) ds = \int_{X[0,t]} \ell (x,t) \mu(dx). \]
Hence, $\mu(X[0,t]) > 0$. 
\end{proof}


\section{Processes on bounded modifications}

Let $\M = (\R^d, d, \mu, X_t, P^x)$ be a bounded modification of the quintuple of $\R^d$, $d$, the Lebesgue measure and the standard Brownian motion. 
Throughout this section, $C_{\M}$ and $c_{\M}$ denote positive constants depending only on $\M$. 
If $\M$ is the quintuple of $\R^d$, $d$, the Lebesgue measure and the standard Brownian motion, then, we denote $C_{\M}$ and $c_{\M}$ by $C_{\textup{BM}}$ and $c_{\textup{BM}}$ respectively.
We denote by $p_{\M} (t,x,y)$ the heat kernel of $\M = (\R^d, d, \mu, X_t, P^x)$. 
For $y \in \R^d$ and $\epsilon_2 > \epsilon_1 > 0$, let 
\[ A\left(y, \epsilon_1, \epsilon_2\right) := \left\{z \in \R^d : \epsilon_1 \le d(z,y) \le \epsilon_2 \right\}. \]

We remark that we do {\it not} change the metric structure, in particular the metric is the Euclid metric and hence geodesic. 
Therefore we can apply \cite[Theorem 3.2]{BGK12} and we have that 
FHK$(2)$ is equivalent to the {\it parabolic Harnack inequality} (The precise definition of this inequality  is long in the framework of metric measure spaces, so we omit it here. See Barlow-Bass-Kumagai \cite[Remark 2.2]{BBK06} and \cite{BGK12} for details.), 
arguing as in the proof of \cite[Lemma 4.6]{BGK12},   
for every $z_1, z_2 \in A\left(y, \epsilon_1, \epsilon_2\right)$,   
\begin{equation}\label{conti-by-phi} 
\left| p_{\M} (s, y, z_1) - p_{\M} (s, y, z_2)\right| \le 1 \wedge C_{\M} \left(\frac{d (z_1, z_2)}{s}\right)^{2}. 
\end{equation} 

First we consider the case that $d \ge 3$. 
Let the {\it Green function}  
\[ G^{\M}(x,y) := \int_{0}^{\infty} p_{\M}(t,x,y) dt.  \]
 
\begin{proof}[Proof of Theorem \ref{thm-finite} (i)]
By FHK$(2)$, 
it holds that for every $T > 2$
\[ \left| \left(\int_0^T p_{\M}(t,x,y) dt \right)^{-1} -  \frac{1}{G^{\M}(x,y)} \right|  = \left| \frac{\int_0^T p_{\M}(t,x,y) dt  - \int_0^{\infty} p_{\M}(t,x,y) dt }{\int_0^T p_{\M}(t,x,y) dt \int_0^{\infty} p_{\M}(t,x,y) dt } \right| \]
\[ \le \frac{\int_T^{\infty} p_{\M}(t,x,y) dt }{\left(\int_0^T p_{\M}(t,x,y) dt\right)^2 }  \]
\[ \le \frac{\int_T^{\infty} c_{\M, 1} t^{-d/2} \exp(- c_{\M, 1} d(x,y)^2 / t) dt}{\left(\int_1^2 p_{\M}(t,x,y) dt\right)^2}  \]
\[ \le \frac{c_{\M, 1} T^{1-d/2}}{\left( c_{\M, 2} 2^{-d/2} \exp(- c_{\M, 2} d(x,y)^2) \right)^2},  \]
where $c_{\M, 1}$ and $c_{\M, 2}$ are positive constants independent from $x, y, t, T$.

Hence it holds that  for every $x, y \in \R^d$ satisfying that $(1-a)\epsilon \le d(x,y) \le (1+a)\epsilon$,  
\[ \left| \left(\int_0^T p_{\M}(t,x,y) dt \right)^{-1} -  \frac{1}{G^{\M}(x,y)} \right| \le \frac{c_1 T^{1-d/2}}{c_2^2 2^{-d} \exp(-2c_2 (1+\epsilon)^2 a^2)}. \]

Hence it holds that for every $a \in (0,1)$, 
\begin{equation*} 
\lim_{T \to \infty} \sup_{x, y \in \R^d; (1-a)\epsilon \le d(x,y) \le (1+a)\epsilon} \left| \left(\int_0^T p_{\M}(t,x,y) dt \right)^{-1} -  \frac{1}{G^{\M}(x,y)} \right| = 0. 
\end{equation*} 
Therefore, we can show the following by modifying the proof of  Lemma \ref{lem-fund-rough} a little.    

\begin{Lem}\label{tr-lem} 
For every $\epsilon > 0$, $a \in (0,1)$, $x \in \R^d$ , and for each integer $n \ge 1$, \\ 
(i) 
\[ \limsup_{t \to \infty} \frac{E^x [\V]}{t} \] 
\begin{equation*}
\le \frac{\sup_{w \in \R^d \setminus B(x,n)}  \mu(B(w, a \epsilon))}{\inf_{w \in \R^d \setminus B(x,n)}  \mu(B(x, a \epsilon))} \sup_{y,z \in \R^d \setminus B(x,n), (1-a)\epsilon \le d(y,z) \le (1+a)\epsilon} 
\frac{1}{G^{\M}(y,z)}. 
\end{equation*} 
(ii) 
\[ \liminf_{t \to \infty} \frac{E^x [\V]}{t} \]
\begin{equation*}
\ge \frac{\inf_{w \in \R^d \setminus B(x,n)} \mu(B(w, a \epsilon))}{\sup_{w \in \R^d \setminus B(x,n)} \mu(B(x, a \epsilon))} \inf_{y,z \in \R^d \setminus B(x,n), (1-a)\epsilon \le d(y,z) \le (1+a)\epsilon} \frac{1}{G^{\M}(y,z)}. 
\end{equation*}
\end{Lem}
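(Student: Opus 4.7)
The plan is to adapt the proof of Lemma~\ref{lem-fund-rough} to the transient bounded-modification setting by (a) splitting the spatial integral in \eqref{base} at the ball $B(x,n)$, and (b) replacing the role played by the mesoscopic growth function $f(t)$ with the Green function $G^{\M}$, using the uniform convergence
\[ \lim_{T \to \infty} \sup_{(y,z):\,(1-a)\epsilon \le d(y,z) \le (1+a)\epsilon} \left|\Big(\int_0^T p_{\M}(s,y,z)\,ds\Big)^{-1} - \frac{1}{G^{\M}(y,z)}\right| = 0 \]
that was established just above the statement of the lemma from the Gaussian upper bound half of FHK$(2)$.

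For part (i), write
\[ E^x [\V] = \int_{B(x,n)} P^x(T_{B(y,\epsilon)} \le t)\,d\mu(y) + \int_{\R^d \setminus B(x,n)} P^x(T_{B(y,\epsilon)} \le t)\,d\mu(y). \]
The first summand is at most $\mu(B(x,n)) < \infty$, hence $O(1) = o(t)$. For the second summand, I would apply Lemma~\ref{lem-fund}(i) with $\eta = a\epsilon$ and $T = \delta t$ for some small $\delta > 0$, and repeat the Fubini/conservativeness chain of inequalities from the proof of Lemma~\ref{lem-fund-rough}(i), but with every sup and inf on volumes $\mu(B(\cdot,a\epsilon))$ and on the denominator integral $\int_0^{\delta t} p_{\M}(s,\cdot,\cdot)\,ds$ now taken only over points in $\R^d \setminus B(x,n)$. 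Dividing by $t$, sending $t \to \infty$ so that $\delta t \to \infty$, invoking the Green-function limit displayed above, and finally letting $\delta \downarrow 0$ produces the claimed upper bound. Part (ii) is strictly symmetric: use Lemma~\ref{lem-fund}(ii) in place of (i) and drop the near-region integral, which is nonnegative.

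The main technical obstacle is an off-by-$\epsilon$ issue: when $y \in \R^d \setminus B(x,n)$ the sphere $\overline{\partial}B(y,\epsilon)$ may still intersect $B(x,n)$, so the inner sup/inf over $w \in \overline{\partial}B(y,\epsilon)$ in Lemma~\ref{lem-fund} a priori ranges over $\R^d \setminus B(x,n-\epsilon)$ rather than $\R^d \setminus B(x,n)$. I would handle this by performing the spatial split at $B(x, n')$ with $n' := n + \lceil\epsilon\rceil + 1$ rather than at $B(x,n)$, so that every $w$ encountered through Lemma~\ref{lem-fund} lies in $\R^d \setminus B(x,n)$; since the conclusion is stated uniformly for every integer $n \ge 1$ and is monotone as $n$ increases (both the sup/inf in the bound are weaker for larger exclusion balls), this radius shift is absorbed in the statement. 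The same adjustment applies to the lower bound.
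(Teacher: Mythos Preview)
Your proposal is correct and is precisely the ``little modification'' the paper has in mind: split off the bounded ball $B(x,n')$ (contributing $o(t)$), run the Fubini/conservativeness chain from Lemma~\ref{lem-fund-rough} on the complement, and replace the $f(t)$-asymptotic by the Green-function limit established just before the lemma. Your handling of the off-by-$\epsilon$ issue via a shifted splitting radius $n'$ is also correct, and the monotonicity you invoke goes the right way (enlarging the exclusion ball shrinks the sups and enlarges the infs, so the bound proved over $\R^d\setminus B(x,n')$ is stronger than the stated one over $\R^d\setminus B(x,n)$).

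One small point you glossed over in part~(ii): after restricting the $y$-integral to $\R^d\setminus B(x,n')$ and applying Fubini, the numerator is no longer exactly $t\cdot\inf\mu(B(\cdot,a\epsilon))$ as in Lemma~\ref{lem-fund-rough}(ii), but rather $\inf\mu(B(\cdot,a\epsilon))\cdot\int_0^t P^x(X_s\notin B(x,n'+a\epsilon))\,ds$. You need the extra observation that this time integral is $t-O(1)$, which follows from the on-diagonal upper bound in FHK$(2)$ and $d\ge 3$ (so that $\int_0^\infty P^x(X_s\in B(x,R))\,ds<\infty$). The paper makes the analogous remark explicitly in the $d=2$ argument; here it is even easier.
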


Let $x \in \R^d$. 
By Lemma \ref{tr-lem} above, we have that for every $a \in (0,1)$ and large $n$, 
\[ \inf_{y, z \in \R^d \setminus B(0,n), (1-a)\epsilon \le d(y, z) \le (1+a)\epsilon} 
\frac{1}{G^{\M} (y,z)} \le \liminf_{t \to \infty} \frac{E_{\M}^x [\V]}{t} \] 
\begin{equation}\label{any-a}
\le \limsup_{t \to \infty} \frac{E_{\M}^x [\V]}{t} \le \sup_{y, z \in \R^d \setminus B(0,n), (1-a)\epsilon \le d(y, z) \le (1+a)\epsilon} 
\frac{1}{G^{\M} (y,z)}. 
\end{equation}

Since \eqref{any-a} holds for every $a \in (0,1)$,  
it suffices to show that for every $x \in \R^d$, 
\begin{equation}\label{g-g} 
\lim_{y; d(x,y) \to \infty} \sup_{z \in A\left(y, \epsilon_1, \epsilon_2\right)}  \left| G^{\M} (y, z) -  G^{\textup{BM}}(y, z) \right| = 0.
\end{equation}
Indeed, if this holds, then, 
\[ \inf_{y, z \in \R^d; (1-a)\epsilon \le d(y, z) \le (1+a)\epsilon} 
\frac{1}{G^{\textup{BM}} (y,z)} \le  \liminf_{t \to \infty} \frac{E_{\M}^x [\V]}{t} \]
\[\le \limsup_{t \to \infty} \frac{E_{\M}^x [\V]}{t} \le \sup_{y, z \in \R^d; (1-a)\epsilon \le d(y, z) \le (1+a)\epsilon} 
\frac{1}{G^{\textup{BM}} (y,z)}, \]
and the assertion follows if $a \to 0$. 

Let $ \mu_{\textup{BM}}$ be the Lebesgue measure on $\R^d$.  
Fix $y \in \R^d$. 
Let $u$ be a non-negative bounded continuous function supported on $A\left(y, \epsilon_1, \epsilon_2\right)$. 

Since $D$ is bounded and the point $x$ is fixed as a starting point of the process $(X_t)_t$, we have that if $d(x,y)$ is sufficiently large, then,  $B(y, d(x, y)/2) \cap D = \emptyset$. 
Indeed, if $B(y, d(x,y)/2) \cap D \ne \emptyset$, then, there exists a point $w \in D$ such that $d(w, y) \le d(x, y)/2$. 
Since $D$ is a bounded subset of $\R^d$, $\sup_{z \in D} d(x, z) < +\infty$. 
If $d(x, y)$ is sufficiently large, then, $\sup_{z \in D} d(x, z) < d(x,y)/3$, and hence, if $B(y, d(x,y)/2) \cap D \ne \emptyset$, then, 
$$d(x,y) \le d(x,w) + d(w, y) \le \sup_{z \in D} d(x, z) + \frac{d(x, y)}{2} \le  \frac{5d(x,y)}{6},$$ 
which is a contradiction. 
Hence $B(y, d(x,y)/2) \cap D = \emptyset$. 

By this and Definition \ref{bddm-def}, we have that  if $d(x,y)$ is sufficiently large, then, 
\[ E^{y}_{\M} \left[ u\left(X^{\M}_{t \wedge \tau_{B(y, d(x,y)/2)}}\right) \right] = E^{y}_{\textup{BM}} \left[ u\left(X^{\R^d}_{t \wedge \tau_{B(y, d(x,y)/2)}}\right) \right]. \]

Hence,  if $d(x,y)$ is sufficiently large, then,
\[ \int_{\R^d} u(z) G^{\M} (y,z) \mu_{\M} (dz) - \int_{\R^d} u(z)  G^{\textup{BM}}(y, z)  \mu_{\textup{BM}} (dz) \]
\[ = \int_0^{\infty} E^{y}_{\M} \left[u(X^{\M}_t)\right] - E^{y}_{\textup{BM}} \left[u(X^{\R^d}_t)\right]  dt \]
\[ = \int_0^{\infty} E^{y}_{\M} \left[u(X^{\M}_t) - u\left(X^{\M}_{t \wedge \tau_{B(y, d(x,y)/2)}}\right) \right] - E^{y}_{\textup{BM}} \left[u(X^{\R^d}_t) - u\left(X^{\R^d}_{t \wedge \tau_{B(y, d(x,y)/2)}}\right) \right] dt. \]

We have that for every $t_0 > 0$, 
\[ \left| \int_0^{\infty} E^{y}_{\M} \left[u\left(X^{\M}_t\right) - u\left(X^{\M}_{t \wedge \tau_{B(y, d(x,y)/2)}}\right) \right] dt \right| \]
\[= \left| \int_0^{\infty} E^{y}_{\M} \left[u\left(X^{\M}_t\right), \  t \ge \tau_{B(y, d(x,y)/2)} \right] dt \right| \]
\[ \le \|u\|_{\infty} \int_0^{\infty} P_{\M}^y \left(X^{\M}_t \in A\left(y, \epsilon_1, \epsilon_2\right), \, t \ge \tau_{B(y, d(x,y)/2)}\right) dt \] 
\[ \le \|u\|_{\infty} \int_0^{t_0} +  \int_{t_0}^{\infty} P_{\M}^y \left(X^{\M}_t \in A\left(y, \epsilon_1, \epsilon_2\right), \, t \ge \tau_{B(y, d(x,y)/2)}\right) dt \] 
\[ \le \|u\|_{\infty} \left(t_0 P_{\M}^y \left(t_0 \ge \tau_{B(y, d(x,y)/2)}\right) + C_{\M} t_{0}^{1-d/2}\right)\] 
\[ = \|u\|_{\infty} \left(t_0 P_{\R^d}^0 \left(t_0 \ge \tau_{B(0, d(x,y)/2)}\right) + C_{\M} t_{0}^{1-d/2}\right), \] 
where in the second inequality we have used $\textup{supp}(u) \subset A\left(y, \epsilon_1, \epsilon_2\right)$, in the fourth inequality we have used FHK$(2)$, and in the fifth equality we have used Definition \ref{bddm-def}.  
In the same manner we have the almost same estimate also for $\R^d$. 

Let $\mathcal{U}\left(A\left(y, \epsilon_1, \epsilon_2\right)\right)$ be the space of non-negative bounded continuous functions supported on $A\left(y, \epsilon_1, \epsilon_2\right)$ whose supremum norm is smaller than or equal to one. 
We have that for every $t_0 > 0$, 
\[ \sup_{u \in \mathcal{U}\left(A\left(y, \epsilon_1, \epsilon_2\right)\right)} \left|  \int_{\R^d} u(z) G^{\M} (y,z) \mu_{\M} (dz) - \int_{\R^d} u(z)  G^{\textup{BM}}(y, z)  \mu_{\textup{BM}} (dz) \right| \]
\[ \le \|u\|_{\infty} \left(2t_0 P_{\R^d}^0 \left(t_0 \ge \tau_{B(0, d(x,y)/2)}\right) + (C_{\M} + C_{\R^d})t_{0}^{1-d/2}\right), \]
where $C_{\R^d}$ is the constant corresponding to $C_{\M}$. 

Since $d \ge 3$ and $t_0$ can be taken arbitrarily large, we have that 
\begin{equation*}
\lim_{y; d(x,y) \to \infty} \sup_{u \in \mathcal{U}\left(A\left(y, \epsilon_1, \epsilon_2\right)\right)} \left|  \int_{\R^d} u(z) G^{\M} (y,z) \mu_{\M} (dz) - \int_{\R^d} u(z)  G^{\textup{BM}}(y, z)  \mu_{\textup{BM}} (dz) \right| = 0.  
\end{equation*} 

By \eqref{conti-by-phi}, it holds that  for $z_1, z_2 \in A\left(y, \epsilon_1, \epsilon_2\right)$, 
\[ \left|G^{\M} (y, z_1) - G^{\M} (y, z_2)\right| \le \int_{0}^{d(z_1, z_2)} 1 dt + \int_{d(z_1, z_2)}^{\infty} C_{\M} \left(\frac{d (z_1, z_2)}{t}\right)^{2} dt \]
\begin{equation}\label{phi} 
\le (C_{\M} + 1) d(z_1, z_2).      
\end{equation} 
We have the almost same estimate also for $\R^d$. 

Recall that if $B(x,r) \cap D = \emptyset$, then, by Definition 1.3, $\mu(B(x,r))$ equals the volume of a ball of radius $r$ in $\R^d$, in particular, $\mu(B(x,r))  \asymp r^d$. 
If \eqref{g-g} fails, then, by \eqref{phi}, 
we can construct an $L^{\infty}$-bounded function $u$ which is supported on $A\left(y, \epsilon_1, \epsilon_2\right)$ and  
satisfies that 
\[ \limsup_{y; d(x,y) \to \infty} \left|  \int_{\R^d} u(z) G^{\M} (y,z) \mu_{\M} (dz) - \int_{\R^d} u(z)  G^{\textup{BM}}(y, z)  \mu_{\textup{BM}} (dz) \right| > 0. \]
Now \eqref{g-g} follows. 
\end{proof}


Second we consider the case that $d = 2$. 
For $x \in \R^2$ and $A \subset \R^2$,  let 
\[ d(x, A) := \inf_{y \in A} d(x,y). \]
For $E \subset \R^2$ and $\epsilon > 0$,  let 
\[ E_{\epsilon} := \left\{x \in \R^d : d(x, \R^d \setminus E) \ge \epsilon \right\}. \]
It is obvious that $E_{\epsilon} \subset E$ holds for every $\epsilon > 0$. 

\begin{proof}[Proof of Theorem \ref{thm-finite} (ii)]  
Recall the argument in the proof of Lemma \ref{lem-fund-rough}. 
It follows from Lemma \ref{lem-fund} that for every Borel measurable $E \subset \R^2$, $\eta = a\epsilon$, $a \in (0,1)$ and $T, t > 0$,  
\[ \int_{E} P^x (T_{B(y, \epsilon)} \le t) \mu(dy) \le  \int_{E}\frac{ \int_{0}^{t+T} P^x \left(X_{s} \in B(y, a\epsilon)\right) ds}{\inf_{w \in \overline{\partial}B(y, \epsilon)}  \int_{0}^{T} P^w (X_{s}  \in B(y, a\epsilon )) ds} \mu(dy) \]
\[ \le  \frac{\int_{E} \int_{0}^{t+T} P^x \left(X_{s} \in B(y, a\epsilon )\right) ds \mu(dy)}{\inf_{y \in E} \mu(B(y, a\epsilon)) \inf_{z, w \in \R^2; (1-a)\epsilon \le d(z,w) \le (1+a)\epsilon}  \int_{0}^{T} p(s,w,z) ds} \]
\begin{equation}\label{upper-2d-fund} 
\le  \frac{\sup_{y \in \R^2} \mu\left(B(y, a\epsilon ) \cap E\right) (t+T)}{\inf_{y \in E} \mu(B(y, a\epsilon)) \inf_{z, w \in \R^2; d(w, E) \le \epsilon, (1-a)\epsilon \le d(z,w) \le (1+a)\epsilon}  \int_{0}^{T} p(s,w,z) ds}, 
\end{equation}
and we also have that 
\[ \int_{E} P^x (T_{B(y, \epsilon)} \le t) \mu(dy) \ge \int_{E} \frac{\int_0^t P^x \left(X_s \in B(y, a\epsilon)\right) ds}{\sup_{w \in \partial B(y, \epsilon)} \int_0^t P^w (X_s \in B(y, a\epsilon)) ds} \mu(dy) \]added
\[ = \int_{E} \frac{\int_0^t \int_{B(y, a\epsilon)} p(s,x,z) \mu(dz) ds}{\sup_{w \in \partial B(y, \epsilon)} \int_0^t \int_{B(y, a\epsilon)} p(s,w,z) \mu(dz) ds} \mu(dy) \]
\[ \ge \frac{\int_{E} \int_0^t \int_{B(y, a\epsilon)} p(s,x,z) \mu(dz) ds \mu(dy) }{\sup_{y \in E} \left\{\mu(B(y, a\epsilon)) \sup_{w \in \partial B(y, \epsilon)}  \sup_{z \in B(y, a\epsilon)} \int_0^t  p(s,w,z) ds \right\}} \]
\[ = \frac{\int_{E} \int_0^t \mu(B(z, a\epsilon)) p(s,x,z) \mu(dz) ds  }{\sup_{y \in E} \left\{\mu(B(y, a\epsilon)) \sup_{w \in \partial B(y, \epsilon)}  \sup_{z \in B(y, a\epsilon)} \int_0^t  p(s,w,z) ds \right\}} \]
\begin{equation}\label{lower-2d-fund}  
\ge  \frac{\inf_{z \in E_{\epsilon}} \mu(B(z, a\epsilon) ) \int_0^t \int_{E_{\epsilon}} p(s,x,z) \mu(dz) ds }{\sup_{y \in E} \mu(B(y, a\epsilon)) \sup_{d(w, E) \le \epsilon, (1-a)\epsilon \le d(z,w) \le (1+a)\epsilon}  \int_{0}^{t} p(s,w,z) ds}. 
\end{equation} 

Let $x \in \R^2$. 
Let $m$ be a large integer. 
Let 
\begin{equation}\label{def-Fm} 
F_m (t) := \frac{t^{m/(m+1)}}{(\log t)^3}. 
\end{equation}    
Let 
\[ S_x \left(t, \epsilon_1, \epsilon_2\right) := \left\{(y,z) \in (\R^d)^2 : d(x,y) + \epsilon_2 \ge t^{1/2} / \log t, z \in A\left(y, \epsilon_1, \epsilon_2\right) \right\}. \] 

We now show that 
\begin{Lem}\label{Sx} 
For every $\epsilon_2 > \epsilon_1 > 0$, 
\[ \lim_{t \to \infty} \sup_{(y,z) \in S_x \left(t, \epsilon_1, \epsilon_2\right)} 
\left| \left(\int_{0}^{ F_m (t) } p_{\M} (s,y,z) ds\right)^{-1} - \left(\int_{0}^{ F_m (t) } p_{\textup{BM}} (s,y,z) ds\right)^{-1} \right| \log t = 0. \]
\end{Lem}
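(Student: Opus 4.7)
The plan is to exploit the bounded-modification structure directly. By (M2), for starting points $y \in D^c$ the laws of $\M$ and $\textup{BM}$ agree up to the first hit of $D$. Since $D$ is bounded, the condition $(y, z) \in S_x(t, \epsilon_1, \epsilon_2)$ forces the uniform lower bounds $d(y, D) \ge c_0 t^{1/2}/\log t$ and $d(z, D) \ge c_0 t^{1/2}/\log t$ for some $c_0 > 0$ and every $t$ large enough. The strategy is to compare the two time integrals by showing that the numerator
\[ \left|\int_0^{F_m(t)} p_\M(s,y,z)\, ds - \int_0^{F_m(t)} p_\textup{BM}(s,y,z)\, ds\right| \]
is super-exponentially small in $t$, while each integral by itself is bounded below by a constant times $\log t$; the identity $|A^{-1} - B^{-1}| = |B - A|/(AB)$ then delivers the claim with ample room.

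First I would record the decomposition, valid for $y, z \in D^c$ and for each choice $\bullet \in \{\M, \textup{BM}\}$,
\[ p_\bullet(s, y, z) = p_\bullet^{D^c}(s, y, z) + E^y_\bullet\!\left[p_\bullet(s - T_D, X_{T_D}, z);\, T_D \le s\right], \]
where $p_\bullet^{D^c}$ denotes the transition density of the subprocess killed on entering $D$. Condition (M2) together with $\mu_\M = \mu_\textup{BM}$ on $D^c$ from (M1) forces $p_\M^{D^c} \equiv p_\textup{BM}^{D^c}$ on $D^c \times D^c$. Subtracting, integrating in $s$, and applying Fubini and the strong Markov property at $T_D$ yields
\[ \left|\int_0^{F_m(t)} (p_\M - p_\textup{BM})(s, y, z)\, ds\right| \le \sum_{\bullet \in \{\M, \textup{BM}\}} \sup_{w \in \overline{D}} \int_0^{F_m(t)} p_\bullet(\tau, w, z)\, d\tau. \]

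Next I would invoke FHK$(2)$ to control the right-hand side. The triangle inequality gives $d(w, z) \ge d(y, D) - \epsilon_2 \ge R_t$, where $R_t := (c_0/2) t^{1/2}/\log t$, for every $w \in \overline D$, $z \in A(y, \epsilon_1, \epsilon_2)$ and $(y, z) \in S_x(t, \epsilon_1, \epsilon_2)$ at large $t$. The Gaussian upper bound $p_\bullet(\tau, w, z) \le C \tau^{-1} \exp(-c\, d(w, z)^2/\tau)$ combined with the substitution $u = c R_t^2/\tau$ gives
\[ \int_0^{F_m(t)} p_\bullet(\tau, w, z)\, d\tau \le C \int_0^{F_m(t)} \tau^{-1} \exp\!\left(-c R_t^2/\tau\right) d\tau \le C' \frac{F_m(t)}{R_t^2} \exp\!\left(-\frac{c R_t^2}{F_m(t)}\right). \]
Since $R_t^2/F_m(t) \ge c'' t^{1/(m+1)} \log t \to \infty$, the right-hand side decays faster than any polynomial in $t$, uniformly over $S_x(t, \epsilon_1, \epsilon_2)$; the exponent $m/(m+1)$ and the factor $(\log t)^3$ in \eqref{def-Fm} are calibrated precisely to force this divergence.

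For the denominator I would apply the Gaussian lower bound in FHK$(2)$: for $(y, z) \in S_x(t, \epsilon_1, \epsilon_2)$ and $s \ge 1$,
\[ p_\bullet(s, y, z) \ge \frac{c_5 \exp(-c_6\, d(y, z)^2/s)}{\mu_\bullet(B(y, \sqrt s))} \ge \frac{c_5 \exp(-c_6 \epsilon_2^2)}{C s} \ge \frac{c}{s}, \]
where the uniform upper bound $\mu_\bullet(B(y, \sqrt s)) \le C s$ holds because $\mu_\bullet(B(y, \sqrt s)) \le \mu_\bullet(D) + \pi s$ with $\mu_\bullet(D) < \infty$ (the $D$-contribution is absorbed for $s \ge 1$). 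Integrating gives $\int_0^{F_m(t)} p_\bullet(s, y, z)\, ds \ge c'' \log F_m(t) \ge c''' \log t$, uniformly in $(y, z) \in S_x$ and in $\bullet$. Combining the three estimates via $|A^{-1} - B^{-1}| = |B - A|/(AB)$ proves the lemma. The main obstacle to watch is purely bookkeeping: the calibration of the constants in $F_m$ so that $R_t^2/F_m(t) \to \infty$. Conceptually, the argument is a clean comparison driven by the vanishing probability of hitting $D$ from distant starting points within time $F_m(t)$.
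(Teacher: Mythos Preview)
Your proof is correct and takes a cleaner route than the paper's. Both arguments reduce (via $|A^{-1}-B^{-1}|=|B-A|/(AB)$ and the FHK$(2)$ lower bound giving $\int_0^{F_m(t)}p_\bullet(s,y,z)\,ds\ge c\log t$) to controlling $\Delta(y,z):=\int_0^{F_m(t)}(p_\M-p_{\textup{BM}})(s,y,z)\,ds$, but they diverge in how this is done. The paper works weakly: it pairs $\Delta$ against continuous test functions $u$ supported in $A(y,\epsilon_1,\epsilon_2)$, uses that the two processes started at $y$ agree until exiting a ball of radius $\sim t^{1/2}/\log t$, and estimates the exit probability by the Burkholder--Davis--Gundy inequality to obtain $|\int u\,\Delta\,d\mu|=O((\log t)^{-(m+3)})$. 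It then passes to a pointwise bound via the H\"older-type oscillation estimate \eqref{conti-by-phi}, which shows that $z\mapsto\Delta(y,z)$ has bounded oscillation on the annulus, so that $\sup_z|\Delta|=O(1)$ and hence is $o(\log t)$. Your first-passage decomposition $p_\bullet=p_\bullet^{D^c}+(\text{after }T_D)$ kills the main term directly by (M1)--(M2) and leaves only Gaussian tails from $\overline D$ to the distant point $z$; this avoids both the test-function detour and the oscillation lemma, and yields a far stronger (stretched-exponential) bound on $|\Delta|$.

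One small point worth making explicit: when you write $p_\bullet(\tau,w,z)\le C\tau^{-1}\exp(-c\,d(w,z)^2/\tau)$ for $w\in\overline D$, the lower bound $\mu_\M(B(w,\sqrt\tau))\ge c\tau$ is not immediate from FHK$(2)$ alone for small $\tau$. The cleanest fix is to use the symmetry $p_\bullet(\tau,w,z)=p_\bullet(\tau,z,w)$ and center the volume at $z$, since $B(z,\sqrt\tau)\cap D=\emptyset$ for all $\tau\le F_m(t)$ and then $\mu_\bullet(B(z,\sqrt\tau))=\pi\tau$ exactly.
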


\begin{proof}
Thanks to FHK$(2)$, it suffices to show that 
\begin{equation}\label{g-g-2} 
\lim_{t \to \infty} \sup_{(y,z) \in S_x (t, \epsilon_1, \epsilon_2) } \left| \int_{0}^{ F_m (t) } p_{\M} (s,y,z) - p_{\textup{BM}}(s, y, z) ds \right| \frac{1}{\log t} = 0.
\end{equation}

Fix $y \in \R^2$.  
Let $u$ be a non-negative bounded continuous function supported on $A\left(y, \epsilon_1, \epsilon_2\right)$. 
Since $A\left(y, \epsilon_1, \epsilon_2\right) \cap D = \emptyset$, 
$u$ is well-defined also on $\R^2$. 
In the same manner as in the case that $d \ge 3$, 
by using the Burkholder-Davis-Gundy inequality for degree $m \ge 1$, 
\[ \left| \int_{0}^{F_m (t) } \left(\int_{\R^2} u(z) p_{\M} (s,y,z) \mu_{\M} (dz) - \int_{\R^2} u(z) p_{\textup{BM}} (s, y, z) \mu_{\textup{BM}} (dz) \right) ds\right| \]
\[ \le \| u \|_{\infty} \int_{0}^{F_m (t) } P_{\textup{BM}}^0 \left(s \ge T_{\R^2 \setminus B(0, t^{1/2} / \log t)}\right) ds \]
\[ \le 2\| u \|_{\infty} \int_{0}^{F_m (t) } P_{\textup{BM}}^0 \left(\max_{0 \le s^{\prime} \le s} \left| X_{s^{\prime}} \right| \ge  t^{1/2} / 2\log t \right) ds \]
\[ \le C_m \int_{0}^{F_m (t) } s^m ds \frac{1}{t^{m} / (\log t)^{2m}} \le \frac{C_m \| u \|_{\infty}}{(\log t)^{m+3}}.  \]
In the third display above, $(X_t)_t$ denotes the one-dimensional standard Brownian motion and $P_{\textup{BM}}^0$ denotes the law of the one-dimensional standard Brownian motion starting at the origin.  

By \eqref{conti-by-phi},  
in the same manner as in the derivation of \eqref{phi}, 
we see that 
\[ \sup_{z_1, z_2 \in A\left(y, \epsilon_1, \epsilon_2\right)} \int_{0}^{F_m (t) } \left| p_{\M} (s, y, z_1) - p_{\M} (s, y, z_2) \right| ds\] 
\[ \le \sup_{z_1, z_2 \in A\left(y, \epsilon_1, \epsilon_2\right)} \int_{0}^{\infty} \left| p_{\M} (s, y, z_1) - p_{\M} (s, y, z_2) \right| ds \] 
\begin{equation}\label{0-Fm} 
\le \sup_{z_1, z_2 \in A\left(y, \epsilon_1, \epsilon_2\right)} (1+C_{\M}) d(z_1, z_2) \le 2(1+C_{\M}) \epsilon_2, 
\end{equation} 
where $C_{\M}$ is the same constant appearing in \eqref{conti-by-phi}.  
Furthermore, by FHK$(2)$,  
\begin{align}\label{Fm-t}  
\int_{F_m (t)}^{t} p_{\M} (s, y, z) ds &\le c_{\M} \int_{F_m (t)}^{t} s^{-1} ds \notag\\
&= c_{\M} \log \frac{t}{F_m (t)} = c_{\M} \left(\frac{\log t}{m+1} + 3 \log\log t\right). 
\end{align} 
The same estimate holds also for the standard Brownian motion, that is, 
\[ \int_{F_m (t)}^{t} p_{\textup{BM}} (s, y, z) ds \le c_{\textup{BM}}  \left(\frac{\log t}{m+1} + 3 \log\log t\right).  \]

By using this, \eqref{0-Fm} and \eqref{Fm-t}, 
it holds that  there exists a constant $C$ such that 
\[ \limsup_{t \to \infty} \sup_{(y, z) \in S_x (t, \epsilon_1, \epsilon_2) } \left| \int_{0}^{t} p_{\M} (s,y,z) - p_{\textup{BM}}(s, y, z) ds \right| \frac{1}{\log t} \le \frac{C}{m+1} \] 
holds for every $m$, and hence, \eqref{g-g-2} follows. 
\end{proof}

Now we show the upper bound. 
We first remark that 
\[ \lim_{t \to \infty} \frac{\log t}{t} \mu_{\M} \left( B(x,  t^{1/2}/\log t) \right) = 0. \]
By this and \eqref{base}, 
\begin{equation}\label{reduced} 
\limsup_{t \to \infty} \frac{\log t}{t} E_{\M}^x \left[\V\right] = \limsup_{t \to \infty} \frac{\log t}{t} \int_{E_{(t)}} P_{\M}^x (T_{B(y, \epsilon)} \le t) \mu_{\M}(dy), 
\end{equation}
where we let $E_{(t)} := \R^2 \setminus B(x,  t^{1/2}/\log t)$ for $t > 0$.  

By applying \eqref{upper-2d-fund} to the case that $E = E_{(t)}$, $a = 1/2$ and $T = F_m (t)$, 
\[ \int_{E_{(t)}} P_{\M}^x (T_{B(y, \epsilon)} \le t) \mu_{\M}(dy) \]
\[\le  \frac{\sup_{y \in \R^2} \mu_{\M}\left( E_{(t)}  \cap B(y, \epsilon/2)\right) (t+F_m (t))}{\inf_{y \in E_{(t)} } \mu_{\M}(B(y, \epsilon/2)) \inf_{z, w; d(w, E_{(t)}) \le \epsilon, \epsilon/2 \le d(z,w) \le 3\epsilon/2}  \int_{0}^{F_m (t)} p(s,w,z) ds}\]

Since $E_{(t)}  \cap D = \emptyset$ for any large $t$, it holds that for every large $t$, 
\[ \sup_{y \in \R^2} \mu_{\M}\left( E_{(t)} \cap B(y, \epsilon/2)\right) = \inf_{y \in E_{(t)} } \mu_{\M}(B(y, \epsilon/2)). \]
Hence, it holds that for every large $t$, 
\[ \int_{E_{(t)}} P_{\M}^x (T_{B(y, \epsilon)} \le t) \mu_{\M}(dy) \le \frac{t +  F_m (t) }{\inf_{(y,z) \in S_x (t, \epsilon/2, 3\epsilon/2)}  \int_{0}^{ F_m (t) } p_{\M} (s,y,z) ds}.  \]

By applying this, \eqref{reduced} and Lemma \ref{Sx},  
it holds that  for every $m \ge 1$, 
\[ \limsup_{t \to \infty} \frac{\log t}{t} E_{\M}^x \left[\V\right] = \limsup_{t \to \infty} \frac{\log t}{t} \int_{\R^d \setminus B(x,  t^{1/2}/\log t)} P_{\M}^x (T_{B(y, \epsilon)} \le t) \mu_{\M}(dy) \]
\[\le \limsup_{t \to \infty} \frac{t +  F_m (t) }{t} \frac{\log t}{\inf_{(y,z) \in S_x (t, \epsilon/2, 3\epsilon/2)}  \int_{0}^{ F_m (t) } p_{\M} (s,y,z) ds} \]
\[ \le \limsup_{t \to \infty} \left(1+\frac{F_m (t) }{t} \right) \frac{\log t}{\inf_{\epsilon/2 \le d(y,z) \le 3\epsilon/2}  \int_{0}^{ F_m (t) } p_{\textup{BM}}(s,y,z) ds}. \]

By FHK(2), we have that for each fixed $\epsilon > 0$, 
\[ \lim_{t \to \infty} \frac{1}{\log F_m (t)} \int_{0}^{ F_m (t) } p_{\textup{BM}}(s,y,z) ds =  \frac{1}{2\pi}. \]
This convergence is uniform with respect to $y$ and $z$ satisfying that $\epsilon/2 \le d(y,z) \le 3\epsilon/2$.  
By this and the definition of $F_m (t)$ in \eqref{def-Fm},  we have that 
\[ \lim_{t \to \infty} \left(1+\frac{F_m (t) }{t} \right) \frac{\log t}{\inf_{\epsilon/2 \le d(y,z) \le 3\epsilon/2}  \int_{0}^{ F_m (t) } p_{\textup{BM}}(s,y,z) ds} = \frac{m+1}{m} 2\pi. \]

Since $m$ can be taken arbitrarily largely, we have that 
\[ \limsup_{t \to \infty} \frac{\log t}{t} E_{\M}^x \left[\V\right]  \le 2\pi. \]  

Now we show the lower bound. 
By FHK$(2)$ and conservativeness, it is easy to see that   
\[ \lim_{t \to \infty} \frac{1}{t}  \int_0^t \int_{E_{(t)}}  p_{\M}(s, x, z) \mu(dz) ds = 1.  \]

By applying  this, Lemma \ref{Sx} and \eqref{lower-2d-fund} to the case that $E = E_{(t)}$ and $a = 1/2$, 
\[ \liminf_{t \to \infty} \frac{\log t}{t} E_{\M}^x \left[\V\right] \]
\[ \ge \liminf_{t \to \infty} \frac{1}{t} \int_0^t \int_{E_{(t)}}  p_{\M} (s, x, z) \mu(dz) ds \frac{\log t}{\sup_{(w, z) \in S_x (t, \epsilon/2, 3\epsilon/2)} \int_{0}^{t} p_{\M} (s,w,z) ds} \]
\[ =  \liminf_{t \to \infty}  \frac{\log t}{\sup_{(w, z) \in S_x (t, \epsilon/2, 3\epsilon/2)} \int_{0}^{t} p_{\textup{BM}} (s,w,z) ds} = 2\pi. \]
Thus we have the assertion.   
\end{proof}

\begin{Rem}
If $d \ge 3$, 
$G(x,y)$ depends on the value of $d(x,y)$. 
If $d = 2$,  then, for every $x,y \in \R^2$, 
\[ p(t,x,y) \sim 2\pi \log t, \ t \to \infty.\]   
In this sense, in the case that $d = 2$, it is not important how to take $\epsilon_1$ and $\epsilon_2$.  
\end{Rem} 


\section{Fluctuation results}

Before we state the proof, we prepare notation. 

\begin{Def}[Diffusion on Riemannian manifolds]\label{Rmfd-def}
Let $\M = (M, g)$ be a connected Riemannian manifold and $\mu_{\M}$ be the Riemannian volume.  
Let $\nabla_{\M} f $ be the weak gradient of $f$.  
Let 
\[ W^1 := \left\{f \in L^2 (M, \mu_{\M}) : \nabla_{\M} f \in L^2 (M, \mu_{\M}) \right\}. \] 
For $f, g \in W^1$, let 
\[ \E_{\M} (f,g) := \int_M \left(\nabla_{\M} f, \nabla_{\M} g\right) d\mu_{\M}.  \]
Here $(,)$ is the canonical inner product on $\R^d$. 
Let $\F_{\M} $ be the closure of $C^{\infty}(M) \textup{ under } W^1$.  
Then, $(\E_{\M}, \F_{\M})$ is a strongly-local regular Dirichlet form in $L^2 (M, \mu_{\M})$.  
Let $\left((X^{\M}_t)_{t \ge 0}, (P_{\M}^x)_{x \in M}\right)$ be an associated diffusion.   
\end{Def}

If we consider a Riemannian manifold $M$, then we may simply write $\M = (M, g, \E_{\M}, \F_{\M})$.  
Let $p_{\M} (t,x,y)$ be the minimal positive fundamental solution of the heat equation on $M$. 
Then, for every $h \in L^2 (M, \mu_{\M})$, every $t \ge 0$ and {\it every} $x \in M$,   
\[ \int_M h(y) p_{\M} (t,x,y) \mu_{\M}(dy) = E_{\M}^x \left[h(X^{\M}_t)\right]. \]

In the following, we deal with the case that $M = \R^d$ but $g$ is not the Euclid metric. 
However we consider a class of metric measure spaces which consist of the Euclid space $\R^d$, the Euclid distance $d$, the Riemannian volume $\mu_{\M}$, and $(\E_{\M}, \F_{\M})$ for a Riemannian manifold $\M$. 
We emphasize that we consider the Euclid distance instead of the Riemannian distance $d_{\M}$ defined by the Riemannian metric of $\M = (\R^d, g)$. 
Informally speaking we do not consider ``singular" $\M$ in this section. 
We consider only the case that  the Riemannian distance $d_{\M}$ of $\M$ is equivalent to the Euclid distance, and hence the topologies of $\R^d$ induced by these two distances are identical with each other. 
In this case $(\E_{\M}, \F_{\M})$ and an associated diffusion $\left((X^{\M}_t)_{t \ge 0}, (P_{\M}^x)_{x \in M}\right)$ are well-defined. 
We can consider a metric measure Dirichlet space $\left(\R^d, d, \mu_{\M}, \E_{\M}, \F_{\M}\right)$ and an associated diffusion of it.

Before we proceed to the proof, we give a rough sketch of the proof. 
The arguments below are somewhat informal. 
\begin{proof}[Outline of Proof of Theorem \ref{thm-fluc-1}]
We first construct  a specific sequence of metric measure Dirichlet spaces $(\M_k = (\R^d, d, \mu_{\M_k}, \E_{\M_k}, \F_{\M_k}))_{k \ge 1}$ satisfying the following conditions:

(i) (Definitions \ref{Rmfd-def}, \ref{Rk-def}, and  \ref{Riem-def}) 
Roughly speaking, the local structure of $\M_k$ and $\M_{k+1}$ are identical with each other. 
There exists an increasing sequence $(R_k)_k$ such that $\mu_{M_{k+1}} = \mu_{M_k}$ on $B(0, R_k)$ and the restriction of $(\E_{\M_{k+1}}, \F_{\M_{k+1}}))$ to $B(0, R_k)$ is identical with that of $(\E_{\M_k}, \F_{\M_k}))$ to $B(0, R_k)$. 

(ii) (Lemma \ref{fm-check}) 
If $k$ is odd, then, a metric measure Dirichlet space $(\R^d, d, \mu_{\M_k}, \E_{\M_k}, \F_{\M_k})$ is a bounded modification of a metric measure Dirichlet space  $(\R^d, d, \mu_{A}, \E_{A}, \F_{A})$. \\
If $k$ is even, then, a metric measure Dirichlet space $(\R^d, d, \mu_{\M_k}, \E_{\M_k}, \F_{\M_k})$ is a bounded modification of a metric measure Dirichlet space $(\R^d, d, \mu_{B}, \E_{B}, \F_{B})$. 

(iii) (Lemma \ref{conv-thm-finite}) There exist two constants $0 < c(\mathcal{A}, \epsilon) < c(\mathcal{B}, \epsilon) < +\infty$ such that 
\[ \lim_{t \to \infty} \frac{E^{0}_{\M_{2k-1}}[\V]}{t} = c(\mathcal{A}, \epsilon), \ \textup{ and }  \]
\[ \lim_{t \to \infty} \frac{E^{0}_{\M_{2k}}[\V]}{t} = c(\mathcal{B}, \epsilon). \]

Conditions (ii) and (iii) above are independent from how to choose $(R_k)_k$.   

Then we consider a specific metric measure Dirichlet space $\M_{\infty} = (\R^d, d, \mu_{\M_\infty}, \E_{\M_\infty}, \F_{\M_\infty})$ as a limit of $(\M_k)_{k \ge 1}$. 
By choosing $R_{k+1}$ is much larger than $R_k$ for each $k$, 
we have a rapidly increasing sequence $(R_k)_k$. 
This is done between Lemma \ref{peculiar-k} and Proposition \ref{stability}. 
Then we have that 
\[  \liminf_{t \to \infty} \frac{E^{0}_{\M_{\infty}}\left[ \V\right]}{t} \le c(\mathcal{A}, \epsilon) < c(\mathcal{B}, \epsilon) \le \limsup_{t \to \infty} \frac{E^{0}_{\M_{\infty}}\left[ \V \right]}{t} \] 

Each $\M_k$ arises from the Brownian motion on a Riemannian manifold. 
In the definition of $\M_k$, we do {\it not} use the Riemannian distance and use the Euclid distance instead. 
However in the proof we use the Riemannian distance associated with $\mu_{\M_k}$ and it makes the arguments clearer. 

Our proof is somewhat analogous to the proof of \cite[Theorem 1.3]{O14}, which deal with the corresponding fluctuation result for the range of random walk on infinite graphs.
However, the continuous framework which we consider here is more technically involved. 
The technical difficulty arises from the fact that the diffusion process which we consider can be arbitrarily far from its starting point in arbitrarily small time.  
\end{proof}

Now we proceed to the proof. 

\begin{proof}[Proof of Theorem \ref{thm-fluc-1}]

\begin{Def}\label{Rk-def} 
(i) For an infinite sequence 
$0 = R_0 < R_1 < R_2 < R_3 < \cdots$ 
and for $k \ge 0$, we let 
$A_k := [R_{2k}, R_{2k+1} - 1)$,  
$C_{k,1} := [R_{2k+1} -1, R_{2k+1})$,  
$B_k := [R_{2k+1}, R_{2k+2}-1)$, and 
$C_{k,2} := [R_{2k+2}-1, R_{2k+2})$.\\
(ii) Let $k \ge 0$. 
Let $G_{2k}$ be a smooth non-negative function on $[0, +\infty)$ 
such that\\ 
(a) $G_{2k} = 1$ on $[R_{2k}, \infty)$\\
(b) for every $j < k$,  
$G_{2k} = 1$ on $A_j$, $G_{2k} = 4$ on $B_j$, and $1 \le G_{2k} \le 4$ on $C_{j,1} \cup C_{j,2}$.  \\
Let $G_{2k+1}$ be a smooth non-negative function on $[0, +\infty)$ 
such that\\ 
(a) $G_{2k+1} = 4$ on $[R_{2k+1}, \infty)$\\
(b) for every $j < k$,  
$G_{2k+1} = 1$ on $A_j$, $G_{2k+1} = 4$ on $B_j$, and $1 \le G_{2k+1} \le 4$ on $C_{j,1} \cup C_{j,2}$.    
Moreover, 
$G_{2k+1} = 1$ on $A_k$, and $1 \le G_{2k+1} \le 4$ on $C_{k,1}$. 
\end{Def} 

Then we have 
\[ G_{\infty} := \lim_{k \to \infty} G_k\] exists, and 
is a smooth non-negative function on $[0, +\infty)$ such that for every $j \ge 0$,  
$G_{\infty} = 1$ on $A_j$, $G_{\infty} = 4$ on $B_j$, and $1 \le G_{\infty} \le 4$ on $C_{j,1} \cup C_{j,2}$.  

We now define a sequence of radially symmetric Riemannian manifolds. 

\begin{Def}\label{Riem-def}
(i) Let a family of Riemannian manifolds $\M_k := (\mathbb{R}^d, g_k)$, $0 \le k \le \infty$, as follows: 
\[ g_k (x) := G_k (d(0, x)) I_d.  \]
Here $I_d$ is the $d \times d$ identity matrix.\\
(ii) Let $\mathcal{A} = (\R^d, g_A)$ and $\mathcal{B} = (\R^d, g_B)$ be Riemannian manifolds such that their Riemannian metrics are given by $g_A \equiv I_d$ and $g_B \equiv 4I_d$ on each point of $\R^d$.  
\end{Def} 
 
We remark that $\M_0$ is the Euclid space. 
$\M_k$ depends only on the choice of the sequence $R_1 < \cdots < R_k$.

\begin{figure}[H]
\centering
\includegraphics[width = 8cm, height = 6cm, bb = 0 0 800 600]{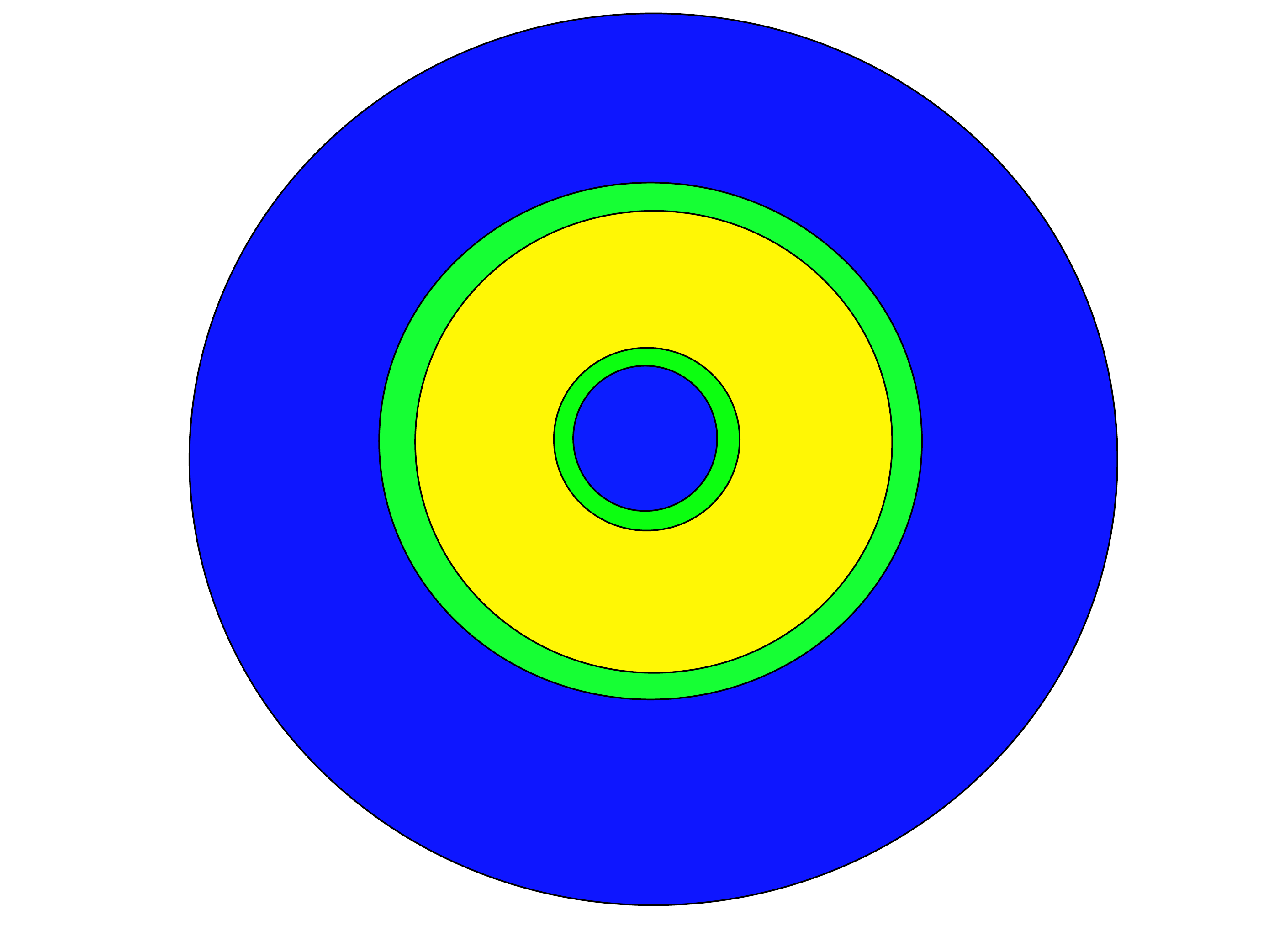}
\caption{\small Rough figure of $\M_k$. Riemannian metrics are the $d \times d$ identity matrix $I_d$ on the blue parts and are $2 \times I_d$ on the yellow part, 
The green parts are ``connecting" parts in order to assure smoothness and local regularity of $\M_k$.}
\end{figure}

For $x \in \R^d$, $t, \epsilon > 0$, 
let $B^{\M_j}_{\textup{Riem}}(x,\epsilon)$ be open balls with center $x$ and radius $\epsilon$ with respect to the Riemannian distance of $\M_j$.  
Since $1 \le G_k \le 4$ for every $k$, 
we can show that for every $0 \le j \le +\infty$,  
\begin{equation}\label{dist-equi} 
d(x,y) \le d^{\M_j}_{\textup{Riem}}(x,y) \le 2d(x,y). 
\end{equation} 
This does not depend on the choices of $\{R_k\}_k$. 

By \eqref{dist-equi}, each $\M_j$ is complete with respect to $d_{\textup{Riem}}$. 
By the Hopf-Rinow theorem, each $\M_j$ is geodesically complete. 

By \eqref{dist-equi}, 
there exists a positive constant $C_d$ such that for every $r > 0$, $x \in \R^d$ and every $j \ge 0$, 
\[ \mu_{\M_j}\left(B^{\M_j}_{\textup{Riem}}(x,r)\right) \le \mu_{\M_j}\left(B(x,r)\right) \le C_d (2r)^d. \]

By this and Grigor'yan \cite[Theorem 9.1]{G99}, 
we have the stochastic completeness, that is, 
for each $t > 0$, every $x \in \R^d$  and every $j \ge 0$, 
\[ \int_{\R^d} p_{\M_j} (t,x,y) \mu_{\M_j}(dy) = 1.\] 

Recall Definition \ref{bddm-def}. 
\begin{Lem}\label{fm-check}
If $k$ is odd, then, a metric measure space $(\R^d, d, \mu_{\M_k}, \E_{\M_k}, \F_{\M_k})$ is a bounded modification of a metric measure space  $(\R^d, d, \mu_{A}, \E_{A}, \F_{A})$. 
If $k$ is even, then, a metric measure space $(\R^d, d, \mu_{\M_k}, \E_{\M_k}, \F_{\M_k})$ is a bounded modification of a metric measure space $(\R^d, d, \mu_{B}, \E_{B}, \F_{B})$. 
\end{Lem}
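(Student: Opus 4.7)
The plan is to realize each $\M_k$ explicitly as a bounded modification in the sense of Definition \ref{bddm-def}. The key input from Definition \ref{Rk-def} is that the profile $G_k$ is eventually constant: $G_{2m} \equiv 1$ on $[R_{2m}, +\infty)$, while $G_{2m+1} \equiv 4$ on $[R_{2m+1}, +\infty)$. Consequently, on the set $\{x \in \R^d : d(0,x) \ge R_k\}$ the Riemannian metric $g_k(x) = G_k(d(0,x)) I_d$ of $\M_k$ coincides pointwise with the corresponding target constant metric ($I_d$ for $\mathcal{A}$ or $4 I_d$ for $\mathcal{B}$). I therefore take $D := B(0, R_k)$ as the bounded domain and verify conditions (M1) and (M2) with this choice.

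Condition (M1) is immediate: since $g_k$ and the target metric agree pointwise on $\R^d \setminus D$, so do their Riemannian volume elements, and hence $\mu_{\M_k}(A) = \mu_{\text{target}}(A)$ for every Borel $A \subset \R^d \setminus D$.

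For (M2), I would invoke the standard fact that the law of a symmetric diffusion killed upon first exit from an open set $U$ is completely determined by the associated part Dirichlet form $(\E^U, \F^U)$, with $\F^U := \{f \in \F : f = 0 \text{ q.e.\ on } U^c\}$ and $\E^U$ the restriction of $\E$. Apply this with $U := \R^d \setminus \overline{D}$. Because the Riemannian metrics of $\M_k$ and of the target manifold agree throughout the closed set $\R^d \setminus D \supset U$, for every $f \in C_c^{\infty}(U)$ the Dirichlet-form integrands $(\nabla f, \nabla f)_{g_k} \, d\mu_{\M_k}$ and $(\nabla f, \nabla f)_{g_{\text{target}}} \, d\mu_{\text{target}}$ coincide pointwise. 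Using $C_c^{\infty}(U)$ as a common core for the two part Dirichlet forms gives $(\E_{\M_k}^U, \F_{\M_k}^U) = (\E_{\text{target}}^U, \F_{\text{target}}^U)$, so the two killed diffusions have identical laws when started from any $x \in U$.

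The main obstacle I expect is reconciling the bounded-modification formulation, which uses the first hitting time $T_D$ to the open ball $D$, with the Dirichlet-form framework, which most naturally controls the first exit time $\tau_U$ from $U = \R^d \setminus \overline{D}$. Because the diffusion has continuous sample paths and the sphere $\overline{\partial} B(0, R_k)$ has zero measure under both volume forms (an instance of \eqref{meas-vanish-bdry}, valid since $G_k$ is smooth and positive), one has $T_D = \tau_U$ almost surely for any starting point $x \in U$, yielding the equality of laws of $X_{\cdot \wedge T_D}$ required by (M2). This reconciliation is routine given the smoothness of $G_k$ across the critical sphere, but it is where the technical work lies.
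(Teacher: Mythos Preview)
Your approach is correct and follows the same route as the paper: (M1) is immediate from the pointwise agreement of the metrics outside $B(0,R_k)$, and (M2) follows because the Dirichlet forms coincide there. The paper dispatches (M2) in one line by citing \cite[Lemma 6.3]{ST92}, whereas you unpack the same mechanism via the equality of part Dirichlet forms on $U=\R^d\setminus\overline{D}$ and the identification $T_D=\tau_U$ for continuous paths; the substance is identical.
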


\begin{proof}
We give a proof in the case that $k$ is odd.  
(M1) is obvious. 
(M2) follows from the definitions of the Dirichlet forms of $\M_{2k+1}$ and $\mathcal{A}$, and Shigekawa-Taniguchi \cite[Lemma 6.3]{ST92}.  
In the case that $k$ is even, we can show the assertion in the same manner.  
\end{proof}

\begin{Lem}\label{Riem-loc-reg}
For each $k$, $(\R^d, d, \mu_{\M_k}, \E_{\M_k}, \F_{\M_k})$ satisfies \textrm{FHK$(2)$}.  
\end{Lem}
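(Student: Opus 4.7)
The plan is to reduce $\textup{FHK}(2)$ for $(\R^d, d, \mu_{\M_k}, \E_{\M_k}, \F_{\M_k})$ to Aronson's classical Gaussian heat-kernel estimate for uniformly elliptic divergence-form operators on $\R^d$, bypassing the Riemannian structure of $\M_k$ altogether. Since $g_k(x) = G_k(d(0,x)) I_d$ with $1 \le G_k \le 4$ and $G_k$ smooth, a short computation from Definition \ref{Rmfd-def} gives
\begin{equation*}
\mu_{\M_k}(dx) = G_k(d(0,x))^{d/2}\,dx, \qquad \E_{\M_k}(f,f) = \int_{\R^d} G_k(d(0,x))^{(d-2)/2}\,|\nabla f(x)|^2\,dx,
\end{equation*}
so both the measure and the Dirichlet integrand are two-sidedly comparable to their Euclidean counterparts, with comparison constants depending only on $d$.

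First I would check volume doubling: $\mu_{\M_k}(B(x,r)) \asymp r^d$ uniformly in $x$, so $\textup{Vol}(d)$ holds.

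The main step would be the two-sided Gaussian heat kernel bound. The generator of the semigroup on $L^2(\R^d,\mu_{\M_k})$ is the weighted divergence-form operator $L_k = G_k^{-d/2}\,\mathrm{div}\bigl(G_k^{(d-2)/2}\,\nabla\,\cdot\,\bigr)$, a uniformly elliptic operator on $\R^d$ with smooth coefficients whose ellipticity constant depends only on $d$. By Aronson's theorem (or, to absorb the weight factor $G_k^{d/2}$ appearing in the reference measure, its Fabes--Kenig--Serapioni extension to $A_2$-weighted uniformly elliptic operators---a bounded positive function is trivially an $A_2$ weight), the heat kernel $p_{\M_k}(t,x,y)$ satisfies two-sided Gaussian bounds of the form $c_1 t^{-d/2}\exp(-c_2 d(x,y)^2/t)$; combined with $\mu_{\M_k}(B(x,\sqrt t))\asymp t^{d/2}$ this is exactly $\textup{FHK}(2)$. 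A conceptually equivalent route would be to prove the parabolic Harnack inequality for $L_k$ by Moser iteration and then invoke Barlow--Grigor'yan--Kumagai \cite[Proposition 5.2]{BGK12} (cited above after the definition of $\textup{HK}(\phi;\beta_1,\beta_2)$), using that the Euclidean metric is geodesic.

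The main obstacle will be locating a statement of Aronson's estimate that applies verbatim to the weighted operator $L_k$ above; either the $A_2$-weighted extension or the PHI + BGK12 route handles this without additional work. Importantly, no curvature or injectivity-radius analysis of $\M_k$ is required, since we are working with the Euclidean distance rather than the Riemannian distance of $\M_k$, and the lemma demands no uniformity in $k$ (though the argument in fact yields constants depending only on $d$).
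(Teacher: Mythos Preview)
Your approach is correct and genuinely different from the paper's.  The paper argues on the Riemannian side: from the uniform bounds on $g_k$ and its first two derivatives it extracts a lower Ricci bound, applies the Li--Yau estimate to obtain a local parabolic Harnack inequality, then invokes the rough-isometry stability results of Hebisch--Saloff-Coste \cite{HSC01} and Barlow \cite{B04} to pass from local to global PHI (using that $\M_k$ is roughly isometric to Euclidean $\R^d$), and finally transfers the conclusion from the Riemannian distance back to the Euclidean distance via the bi-Lipschitz equivalence \eqref{dist-equi}.  You instead stay in Euclidean coordinates from the start: since $1\le G_k\le 4$, the Dirichlet form has a uniformly elliptic integrand and the reference measure has a bounded positive density, so the problem is a textbook instance of Aronson's two-sided Gaussian bounds (or, equivalently, Moser's PHI) for divergence-form operators with a bounded weight on $\partial_t$; the only subtlety you correctly flag is that the weight $G_k^{d/2}$ on the measure is not covered by the most classical statement of Aronson, but is handled by the $A_2$-weighted extension or by Moser iteration together with \cite[Proposition 5.2]{BGK12}.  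Your route is shorter, avoids the Li--Yau and rough-isometry machinery entirely, and transparently yields constants depending only on $d$ (hence uniform in $k$, which is in fact used later in the paper, e.g.\ in Lemma \ref{uhk}).  The paper's route, on the other hand, would adapt more readily to settings where the underlying space is not globally diffeomorphic to $\R^d$ and one cannot write the operator as a uniformly elliptic perturbation of the Euclidean Laplacian.
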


\begin{proof}
By the definition of Riemannian metric $g$ on $\M_j$, $|g|$, $|\partial_i g|$ and $|\partial^{2}_{ik} g|$, $1 \le i, k \le d$, are  uniformly bounded on $\M_j$. 
Therefore, the Ricci curvature is bounded below. 
Therefore, by the Li-Yau estimates (\cite{LY86}), 
$\M_k$ satisfies a local version of the Parabolic Harnack inequality with parameter $2$. 
(See Barlow \cite{B04} for the definition of the Parabolic Harnack inequality with parameter $2$.) 
By the definition, as a Riemannian manifold, $\M_k$ is rough-isometric to $\R^d$ with the Euclid metric $d$. 
By using Hebisch and Saloff-Coste \cite[Theorem 2.7]{HSC01} and \cite[Theorem 5.4]{B04}, $(\M_k, \E)$ also satisfies the Parabolic Harnack inequality with parameter $2$, which is equivalent to FHK$(2)$. 
Now we replace the Riemannian metric with the Euclid metric $d$ and use \eqref{dist-equi}.   
\end{proof} 

\begin{Lem}[convergence results]\label{conv-thm-finite}
For $\M = \mathcal{A} \textup{ or } \mathcal{B}$, let 
\[ c(\M, \epsilon) := \inf_{x, y \in \R^d,  d(x,y) \le \epsilon} \frac{1}{ G^{\M} (x,y)} =\inf_{x, y \in \R^d, d(x,y) \le \epsilon} \frac{1}{\int_0^{\infty} p_{\M}(t,x,y) dt}.\]
Then we have \\
(i) \[ \lim_{t \to \infty} \frac{E^{0}_{\M_{2k-1}}[\V]}{t} = c(\mathcal{A}, \epsilon).  \]
(ii) \[ \lim_{t \to \infty} \frac{E^{0}_{\M_{2k}}[\V]}{t} = c(\mathcal{B}, \epsilon). \]
These convergences do not depend on choices of $\{R_k\}_k$.  
\end{Lem}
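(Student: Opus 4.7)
The plan is to reduce the entire lemma to Theorem \ref{thm-finite}(i) and a mild adaptation of its proof. By Lemma \ref{Riem-loc-reg} each $\M_k$ satisfies $\textup{FHK}(2)$, and by Lemma \ref{fm-check}, $\M_{2k-1}$ is a bounded modification of $\mathcal{A}$ while $\M_{2k}$ is a bounded modification of $\mathcal{B}$. Since $\mathcal{A} = (\R^d, d, \mathrm{Leb})$ with the standard Brownian motion is precisely the underlying space in Theorem \ref{thm-finite}(i), we may apply that theorem directly to $\M_{2k-1}$, obtaining
\[ \lim_{t\to\infty}\frac{E^0_{\M_{2k-1}}[\V]}{t} \;=\; \ca_{\R^d}\bigl(\overline{B}(0,\epsilon)\bigr) \;=\; c(\mathcal{A},\epsilon), \]
where the identification with the capacity uses the classical formula $G^{\mathcal{A}}(y,z) = C_d\,|y-z|^{-(d-2)}$ together with the radial monotonicity that reduces the infimum in the definition of $c(\mathcal{A},\epsilon)$ to the value at $d(y,z)=\epsilon$. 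This proves (i).

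For (ii), the underlying space $\mathcal{B}$ is not the standard Brownian motion setup, since $g_{\mathcal{B}} = 4 I_d$ yields $\mu_{\mathcal{B}} = 2^d\,\mathrm{Leb}$ and the associated diffusion is a time-slowed Brownian motion with $G^{\mathcal{B}}(y,z) = 2^{2-d}\,G^{\mathcal{A}}(y,z)$. Nevertheless, $\mathcal{B}$ is spatially homogeneous and satisfies $\textup{FHK}(2)$ with radially symmetric, polynomially decaying Green function, and inspection of the proof of Theorem \ref{thm-finite}(i) shows that these are the only properties of the underlying space that it used. Rerunning the argument with $p_{\mathrm{BM}}$ replaced by $p_{\mathcal{B}}$ and $G^{\mathrm{BM}}$ by $G^{\mathcal{B}}$, the central Green-function comparison
\[ \lim_{y;\,d(x,y)\to\infty}\;\sup_{z\in A(y,\epsilon_1,\epsilon_2)} \bigl|G^{\M_{2k}}(y,z) - G^{\mathcal{B}}(y,z)\bigr| = 0 \]
is obtained by the same coupling used to prove \eqref{g-g}: because $D$ is bounded, for $d(x,y)$ large enough we have $B(y,d(x,y)/2)\cap D = \emptyset$, so by property (M2) the processes $X^{\M_{2k}}$ and $X^{\mathcal{B}}$ killed on exit from $B(y,d(x,y)/2)$ have the same law, while the Gaussian exit-time tail of $X^{\mathcal{B}}$ (furnished by $\textup{FHK}(2)$ for $\mathcal{B}$) controls the error. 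The $\mathcal{B}$-analogue of Lemma \ref{tr-lem}, combined with this comparison and the local continuity bound \eqref{conti-by-phi} applied to $G^{\M_{2k}}$, then yields
\[ \lim_{t\to\infty}\frac{E^0_{\M_{2k}}[\V]}{t} = c(\mathcal{B},\epsilon). \]

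The independence from $\{R_j\}_j$ is automatic, because for each fixed $k$ the resulting limit depends only on the underlying space $\mathcal{A}$ or $\mathcal{B}$, not on the specific modification domain $B(0,R_{2k-1})$ or $B(0,R_{2k})$. The main obstacle is the careful transfer of the Green-function coupling \eqref{g-g} to the setting where the underlying space is $\mathcal{B}$ rather than the standard Euclidean Brownian motion: although the structural argument is identical, one must check that the $\textup{FHK}(2)$ Gaussian constants for both $\mathcal{B}$ and $\M_{2k}$ close the exit-time estimate, and that \eqref{conti-by-phi} provides a modulus of continuity uniform enough to let the infimum over the annulus $(1-a)\epsilon \le d(y,z) \le (1+a)\epsilon$ in the $\mathcal{B}$-analogue of Lemma \ref{tr-lem} pass, as $a \to 0$, to the value of $1/G^{\mathcal{B}}$ at $d(y,z)=\epsilon$. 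These are routine verifications once the comparison is set up.
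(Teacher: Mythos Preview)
Your argument for part (i) matches the paper's exactly. For part (ii) your approach is correct but differs from the paper's. You propose to rerun the proof of Theorem~\ref{thm-finite}(i) with $\mathcal{B}$ in place of the standard Brownian motion, observing that the only properties used are $\textup{FHK}(2)$, the bounded-modification structure (M1)--(M2), translation invariance of the underlying measure, and Gaussian exit-time tails---all of which hold for $\mathcal{B}$ since it is a constant rescaling of $\R^d$. This is valid.

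The paper instead avoids rerunning anything by a change of metric: it replaces the Euclidean distance $d$ by the Riemannian distance $d_{\textup{Riem}}^{\mathcal{B}}=2d$ of $\mathcal{B}$, under which the quintuple $\widetilde{\mathcal{B}}=(\R^d,d_{\textup{Riem}}^{\mathcal{B}},\mu_{\mathcal{B}},\E_{\mathcal{B}},\F_{\mathcal{B}})$ has law identical to the standard Brownian motion. Then $\widetilde{\M_{2k}}:=(\R^d,d_{\textup{Riem}}^{\mathcal{B}},\mu_{\M_{2k}},\E_{\M_{2k}},\F_{\M_{2k}})$ is a bounded modification of $\widetilde{\mathcal{B}}$, so Theorem~\ref{thm-finite}(i) applies as a black box. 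Since $B_{d_{\textup{Riem}}^{\mathcal{B}}}(x,2\epsilon)=B_d(x,\epsilon)$, one has $E^{0}_{\widetilde{\M_{2k}}}[V_{2\epsilon}(t)]=E^{0}_{\M_{2k}}[\V]$, and the scaling identity $c(\mathcal{A},2\epsilon)=2^{(d-2)/2}c(\mathcal{A},\epsilon)=c(\mathcal{B},\epsilon)$ finishes the computation. The paper's route is shorter and cleaner---one application of the theorem plus a scaling---whereas yours is more hands-on but has the virtue of showing transparently that nothing in Theorem~\ref{thm-finite} is specific to Lebesgue measure and standard Brownian motion beyond spatial homogeneity and $\textup{FHK}(2)$.
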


\begin{proof}
(i) follows from Lemmas \ref{fm-check}, \ref{Riem-loc-reg}, and Theorem \ref{thm-finite}. 
Now we show (ii). 
Let $\widetilde{\mathcal{B}} = (\R^d, d_{\textup{Riem}}^{\mathcal{B}}, \mu_{B},  \E_{B}, \F_{B})$ and 
let $(X_t, P^x)$ be a diffusion associated with $\widetilde{\mathcal{B}}$.  
Then the law is identical with the standard Brownian motion. 
Hence, 
\[ \lim_{t \to \infty} \frac{E^{0}_{\widetilde{\mathcal{B}}}[\V]}{t} = c(\mathcal{A}, \epsilon). \]

Let $\widetilde{\M_{2k}} = (\R^d, d_{\textup{Riem}}^{\mathcal{B}}, \mu_{\M_{2k}}, \E_{\M_{2k}}, \F_{\M_{2k}})$. 
Then, $\widetilde{\M_{2k}}$ is a bounded modification of $\widetilde{\mathcal{B}}$. 
By Theorem \ref{thm-finite}, 
\[ \lim_{t \to \infty} \frac{E^{0}_{\widetilde \M_{2k}}[\V]}{t} = c(\mathcal{A}, \epsilon). \]

Since 
\[ E^{0}_{\widetilde{\M_{2k}}}[V_{2\epsilon}(t)] = E^{0}_{\M_{2k}}[\V], \]
it holds that 
\begin{equation}\label{2epsilon} 
\lim_{t \to \infty} \frac{E^{0}_{\M_{2k}}[\V]}{t} = c(\mathcal{A}, 2\epsilon) = 2^{(d-2)/2}c(\mathcal{A}, \epsilon). 
\end{equation} 

Since the two Riemannian manifolds $\mathcal{A}$ and $\mathcal{B}$ are obtained by multiplying positive constants to the Euclid metrics, and $X^{\mathcal{A}}$ and $X^{\mathcal{B}}$ are Brownian motion on $\mathcal{A}$ and $\mathcal{B}$ respectively, 
\[ p_{\M}(t,x,y) = \left(\frac{1}{2\pi t}\right)^{d/2} \exp\left( - \frac{d^{\M}_{\textup{Riem}}(x,y)^2}{2t} \right),  \]
for $t > 0$ and $x, y \in \R^d$, $\M = \mathcal{A} \textup{ or } \mathcal{B}$.  
Since 
\[ d^{\mathcal{A}}_{\textup{Riem}}(x,y) = d(x,y) = \frac{d^{\mathcal{B}}_{\textup{Riem}}(x,y)}{2}, \] 
we have that 
\[ p_{\mathcal{A}}(t,x,y) =  \left(\frac{1}{2\pi t}\right)^{d/2} \exp\left( - \frac{d(x,y)^2}{2t} \right) \]
and 
\[ p_{\mathcal{B}}(t,x,y) =  \left(\frac{1}{2\pi t}\right)^{d/2} \exp\left( - \frac{2d(x,y)^2}{t} \right) \]
By integrating these quantities with respect to $t$, we have that  
\[ G^{\mathcal{A}} (x,y) = 2^{(d-2)/2} G^{\mathcal{B}} (x,y).\]  

Thus we have 
\[ 2^{(d-2)/2} c(\mathcal{A}, \epsilon) = c(\mathcal{B}, \epsilon).\] 
This and \eqref{2epsilon} complete the proof of assertion (ii). 
\end{proof}

We also have that 
\begin{Lem}[Uniform upper Gaussian heat kernel estimates]\label{uhk}
There exist two constants $c_1$ and $c_2$ such that for every $0 \le j \le +\infty$ and $x, y \in \R^d$, 
\[ p_{\M_j}(t,x,y) \le \frac{c_1}{t^{d/2}} \exp\left(-c_2 \frac{d^{\M_j}_{\textup{Riem}}(x,y)^2}{t}\right).  \]
\end{Lem}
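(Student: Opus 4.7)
The plan is to obtain the uniform Gaussian upper heat kernel bound by combining uniform geometric control of the family $(\M_j)$ with classical Li--Yau type estimates. The crucial observation is that the family $(g_j)_{0 \le j \le \infty}$ consists of radial conformal perturbations of the Euclidean metric by factors $G_j$ with $1 \le G_j \le 4$, and we can arrange $G_j$ to have derivatives bounded uniformly in $j$.

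First, I would revisit Definition \ref{Rk-def} and fix, once and for all, a smooth template $\chi : [0,1] \to [1,4]$ interpolating from $1$ to $4$, so that on each transition interval $C_{k,1}$ (resp.\ $C_{k,2}$) the radial profile $G_k$ (and $G_\infty$) is a translate of $\chi$ (resp.\ its reflection). Since each transition interval has length $1$, this yields a constant $M$ independent of $k$ with
\[ \sup_{0 \le k \le \infty} \bigl( \|G_k\|_\infty + \|G_k'\|_\infty + \|G_k''\|_\infty \bigr) \le M. \]
A direct computation of the Christoffel symbols and the Ricci tensor of the radially symmetric metric $g_j = G_j(|x|)I_d$ then shows that these quantities are controlled solely by $M$ and by $1/|x|$ terms; since $G_j$ is constant (equal to $1$ or $4$) on each region where $1/|x|$ could be large relative to $G_j'$, we obtain a uniform lower Ricci bound $\mathrm{Ric}_{\M_j} \ge -K I_d$ for some $K = K(M,d)$ independent of $j$. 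The volume form $d\mu_{\M_j} = G_j(|x|)^{d/2}\,dx$ and \eqref{dist-equi} yield a uniform volume doubling and the comparison $\mu_{\M_j}\bigl(B^{\M_j}_{\mathrm{Riem}}(x,r)\bigr) \asymp r^d$ with constants independent of $j$ and of $x \in \R^d$, $r > 0$.

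Second, with uniform lower Ricci bound and uniform volume doubling in hand, I would apply the Li--Yau gradient estimate \cite{LY86} to obtain, for each $j$,
\[ p_{\M_j}(t,x,y) \le \frac{C}{\mu_{\M_j}\!\left(B^{\M_j}_{\mathrm{Riem}}(x,\sqrt{t})\right)} \exp\!\left(-c \frac{d^{\M_j}_{\mathrm{Riem}}(x,y)^2}{t} + CKt\right), \]
with $C, c$ depending only on $d$ and $K$. For bounded $t$ this already gives the desired form. To remove the factor $e^{CKt}$ and cover all $t > 0$, I would argue as in Lemma \ref{Riem-loc-reg}: uniform volume doubling and a uniform Poincar\'e inequality (both inherited from the uniform equivalence with the Euclidean structure via rough isometry with constants controlled by $M$ and \eqref{dist-equi}) imply, through Hebisch--Saloff-Coste \cite{HSC01} and \cite{B04}, that the parabolic Harnack inequality with parameter $2$ holds uniformly in $j$. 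This gives FHK$(2)$ with constants $c_1, c_2$ depending only on $d$ and $M$, yielding the claimed bound for every $0 \le j \le \infty$.

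The main obstacle is verifying that every constant in the chain --- the Ricci lower bound $K$, the volume doubling and Poincar\'e constants, the rough isometry parameters between $\M_j$ and the Euclidean $\R^d$, and the constants in the Hebisch--Saloff-Coste transfer --- depends only on $d$ and $M$, and not on the choice of sequence $\{R_k\}_k$ or on the index $j$. The uniform template choice for the length-$1$ transition bands $C_{k,1}, C_{k,2}$ is what makes this possible; any naive smoothing that allows $\|G_k''\|_\infty$ to blow up with $k$ would destroy the uniform Ricci bound. The case $j = \infty$ presents no additional difficulty provided $G_\infty$ is built from the same template $\chi$, since the local geometry of $\M_\infty$ in each ball is then isometric to that of some $\M_k$.
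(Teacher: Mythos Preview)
Your argument is correct, but the paper takes a considerably more direct functional-analytic route that sidesteps Ricci curvature entirely. The paper observes that since $1 \le G_j \le 4$, the measure $\mu_{\M_j}$ is uniformly equivalent to the Lebesgue measure (and the Dirichlet form to the Euclidean one), so the standard Nash inequality on $\R^d$ transfers to all $\M_j$ with a constant independent of $j$. Then Carlen--Kusuoka--Stroock \cite{CKS} yields the uniform on-diagonal bound $p_{\M_j}(t,x,x) \le c/t^{d/2}$, and Grigor'yan's off-diagonal bootstrap \cite[Theorems 3.1 and 3.2]{G97} upgrades this to the full Gaussian upper bound, again with constants depending only on the Nash constant. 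Your approach instead fixes a smooth template to control $\|G_j''\|_\infty$, computes a uniform Ricci lower bound, invokes Li--Yau (which leaves the unwanted factor $e^{CKt}$), and then removes that factor by establishing a uniform parabolic Harnack inequality via Hebisch--Saloff-Coste. This works, and the template idea is a nice way to ensure uniformity in the curvature computation, but it is a longer path: you end up proving a uniform two-sided FHK$(2)$ estimate when only the upper bound is needed. The paper's route has the advantage that the uniformity in $j$ is immediate from the measure comparison $\mu_{\M_j}(B) \le \mu_{\mathcal{A}}(B) \le 2^d \mu_{\M_j}(B)$, with no need to control second derivatives of $G_j$ or to track constants through the PHI machinery.
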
 

\begin{proof}
We first remark that by the Nash inequality \cite{N58}, there is a constant $c$ such that for every $f \in L^1 (\R^d, \mu_{\mathcal{A}}) \cap W^{1,2}(\R^d)$, 
\[ \left(\int_{\R^d} |f|^2 d\mu_{\mathcal{A}}\right)^{1 + 2/d} \le c \int_{\R^d} |\nabla f|^2 d\mu_{\mathcal{A}} \left(\int_{\R^d} |f| d\mu_{\mathcal{A}}\right)^{4/d}.  \]
where we let $W^{1,2}(\R^d)$ be the Sobolev space consisting of all real-valued functions on $\R^d$ whose weak derivatives are in $L^2 (\R^d, \mu_{\mathcal{A}})$. 
 
By the definition of $\M_j$, 
we have that for every Borel measurable subset $B$ of $\R^d$ and every $0 \le j \le +\infty$, 
\[ \mu_{\M_j}(B) \le \mu_{\mathcal{A}}(B) \le 2^d \mu_{\M_j}(B).  \]
Therefore, there is a general constant $c$ such that for every $0 \le j \le +\infty$ and every $f \in L^1 (\R^d, \mu_{\M_j}) \cap W(\R^d)$, 
\[ \left(\int_{\R^d} |f|^2 d\mu_{\M_j}\right)^{1 + 2/d} \le c \int_{\R^d} |\nabla f|^2 d\mu_{\M_j} \left(\int_{\R^d} |f| d\mu_{\M_j}\right)^{4/d}. \]

Therefore, by the Carlen-Kusuoka-Stroock \cite{CKS},  we have that 
there is a general constant $c$ such that for every $j$ and $x \in \R^d$,  
\[ \sup_{t > 0} t^{d/2} p_{\M_j}(t,x,x) \le c. \]  

The assertion now follows from this on-diagonal heat kernel upper bound and  Grigor'yan \cite[Theorems 3.1 and 3.2]{G97}.   
\end{proof} 

\begin{Lem}\label{Lem-1}
For every $t > 0$, 
\[ \lim_{R \to \infty} \sup_{0 \le j \le +\infty} \int_{\R^d \setminus B\left(0, R\right)} P^{0}_{\M_j} \left(  T_{B(x, \epsilon)} \le t  \right) d\mu_{\M_j}(x) = 0.  \]
\end{Lem}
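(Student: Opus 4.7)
The plan is to apply Lemma \ref{lem-fund}(i) with $\eta = \epsilon/2$ and a fixed $T > 0$ to obtain the bound
\[
P^{0}_{\M_j}\!\left(T_{B(x, \epsilon)} \le t\right) \le \frac{1}{I(x, j)} \int_0^{t+T} P^{0}_{\M_j}(X_s \in B(x, \epsilon/2))\,ds,
\]
where
\[
I(x, j) := \inf_{w \in \overline{\partial}B(x, \epsilon)} \int_0^T P^{w}_{\M_j}(X_s \in B(x, \epsilon/2))\,ds,
\]
and then to integrate this over $\{|x| \ge R\}$ and control the resulting double integral using the upper Gaussian heat kernel estimate of Lemma \ref{uhk} together with \eqref{dist-equi}.

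First, I would show that $I(x, j) \ge c_0 > 0$ uniformly in $x \in \R^d$ and $j \in \{0, 1, 2, \ldots, +\infty\}$. For $w \in \overline{\partial}B(x, \epsilon)$ and $y \in B(x, \epsilon/2)$, \eqref{dist-equi} gives $d^{\M_j}_{\textup{Riem}}(w, y) \le 2 d(w, y) \le 3\epsilon$; choosing $T$ large enough that $c_6 \sqrt{T} \ge 3\epsilon$, the Gaussian lower bound of $\mathrm{FHK}(2)$ for $\M_j$ (Lemma \ref{Riem-loc-reg}) yields $p_{\M_j}(T, w, y) \ge c/\mu_{\M_j}(B^{\M_j}_{\textup{Riem}}(w, \sqrt{T}))$. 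Combined with the uniform volume comparison $dy \le d\mu_{\M_j}(y) \le 2^d\,dy$, integrating $p_{\M_j}(s,w,\cdot)$ over $B(x, \epsilon/2)$ and then over a short interval in $s$ produces a lower bound uniform in $x$ and $j$.

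Next, Fubini's theorem yields
\begin{align*}
\int_{\R^d \setminus B(0,R)} \int_0^{t+T} P^{0}_{\M_j}(X_s \in B(x, \epsilon/2))\,ds\,d\mu_{\M_j}(x)
&= \int_0^{t+T} E^{0}_{\M_j}\!\left[\mu_{\M_j}\!\left(B(X_s, \epsilon/2) \cap (\R^d \setminus B(0, R))\right)\right] ds \\
&\le C\epsilon^d \int_0^{t+T} P^{0}_{\M_j}(|X_s| \ge R - \epsilon/2)\,ds,
\end{align*}
since the integrand on the left vanishes when $|X_s| < R - \epsilon/2$ and all ball volumes are uniformly bounded by $C\epsilon^d$. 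Finally, by Lemma \ref{uhk} combined with $d^{\M_j}_{\textup{Riem}}(0, y) \ge |y|$ (from \eqref{dist-equi}),
\[
P^{0}_{\M_j}(|X_s| \ge r) \le 2^d \int_{|y| \ge r} \frac{c_1}{s^{d/2}}\exp\!\left(-c_2 |y|^2/s\right) dy,
\]
a classical Gaussian tail whose constants do not depend on $j$; it decays to $0$ as $r \to \infty$ uniformly in $s \in (0, t+T]$ once $r^2 > t + T$, so integrating in $s$ and letting $R \to \infty$ gives the claim uniformly in $j$.

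The main obstacle is ensuring that the lower bound $c_0$ in step one is independent of $j$. This rests on the observation that the conformal factors $G_j$ and their first two partial derivatives are uniformly bounded in $j$ (since they take values in $[1,4]$ and the transitions on the unit-length intervals $C_{k,1}, C_{k,2}$ can be taken from a fixed smooth template, and $G_\infty$ is a pointwise limit inheriting these bounds), whence the Ricci curvatures of the family $\{\M_j\}$ are uniformly bounded, and the Li-Yau estimates produce $\mathrm{FHK}(2)$ with constants independent of $j$, as in the proof of Lemma \ref{Riem-loc-reg}.
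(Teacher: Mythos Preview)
Your argument is correct but follows a different route from the paper's. The paper bounds $P^{0}_{\M_j}(T_{B(x,\epsilon)} \le t)$ directly by invoking \cite[Proposition~4.4]{GSC02}, which requires only uniform volume doubling and the uniform on-diagonal upper bound $p_{\M_j}(t,x,x)\le C_0/\mu_{\M_j}(B(x,t^{1/2}))$; both follow immediately from the comparison $\mu_{\mathcal A}\le 2^d\mu_{\M_j}$ and Lemma~\ref{uhk}. This yields the pointwise Gaussian bound $P^{0}_{\M_j}(T_{B(x,\epsilon)} \le t)\le C_2\exp(-d(0,B(x,\epsilon))^2/(2C_2 t))$, which is then integrated over $\R^d\setminus B(0,R)$.

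Your approach instead goes through Lemma~\ref{lem-fund}(i), reducing to a uniform lower bound on $I(x,j)$ and a uniform Gaussian tail for $P^{0}_{\M_j}(|X_s|\ge r)$. The latter follows from Lemma~\ref{uhk} exactly as you say. The former, however, needs a \emph{lower} heat-kernel bound uniform in $j$, which is not provided by Lemma~\ref{uhk}; you obtain it by arguing that the $G_j$ and their first two derivatives are uniformly bounded (a fact the paper uses implicitly in the proof of Lemma~\ref{Riem-loc-reg}), so that Li--Yau gives $\mathrm{FHK}(2)$ with $j$-independent constants. This step is correct but requires tracking uniformity through the chain Li--Yau $\Rightarrow$ local PHI $\Rightarrow$ global PHI via \cite{HSC01,B04}, whereas the paper's route needs only the upper bound already packaged in Lemma~\ref{uhk}. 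Your approach has the advantage of being self-contained (it avoids citing \cite{GSC02}), at the cost of the extra uniformity argument for the lower bound.
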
 

\begin{proof}
By the definition of $\mu_{\M_j}$ and Lemma \ref{uhk} above, 
there exists a constant $C_0$ such that for every $j$ and every $x \in \R^d$ and every $r, t > 0$, 
\[ \mu_{\M_j}(B(x, 2r)) \le C_0 \mu_{\M_j}(B(x, r)). \]
and
\[ p_{\M_j}(t,x,x) \le \frac{C_0}{\mu_{\M_j}(B(x, t^{1/2}))}. \]

We recall that $\M_j$ has a structure of Riemannian manifold and is complete with respect to the structure.  
Hence we can apply Grigor'yan and Saloff-Coste  \cite[Proposition 4.4]{GSC02} to this setting and we obtain that 
there exist constants $C_1$ and $C_2$ such that  for every $j$ and every $x \in \R^d \setminus B(0, 2\epsilon)$, 
\[ P^{0}_{\M_j} \left(  T_{B(x, \epsilon)} \le t  \right) \le C_1 \exp\left(-\frac{d^{\M_j}_{\textup{Riem}}(0, B(x, \epsilon))^2}{2t}\right) \le C_2  \exp\left(-\frac{d(0, B(x, \epsilon))^2}{2 C_2 t}\right). \]

Hence, for each fixed $t > 0$, 
\[ \sup_{0 \le j \le +\infty} \int_{\R^d \setminus B\left(0, R\right)} P^{0}_{\M_j} \left(  T_{B(x, \epsilon)} \le t  \right) d\mu_{\M_j}(x) \]
\[\le 2^d C_2 \int_{\R^d \setminus B\left(0, R\right)} \exp\left(-\frac{d(0, B(x, \epsilon))^2}{2 C_2 t}\right) \mu_{\mathcal A}(dx), \to 0, \ R \to +\infty. \]
\end{proof}

\begin{Lem}\label{Lem-2}
We have that for each $t > 0$, 
\[ \lim_{R \to \infty} R^d \sup_{0 \le j \le +\infty} P^{0}_{\M_j}\left( \tau_{B(0, R)} \le t  \right) = 0. \]
\end{Lem}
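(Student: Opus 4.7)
The goal is a uniform-in-$j$ Gaussian-type tail bound for the exit time from the Euclidean ball $B(0,R)$, which will beat the polynomial factor $R^d$. The plan is to reuse exactly the mechanism underlying Lemma \ref{Lem-1}, applied now to the exit time rather than to a hitting time of a small ball.

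First, I would note that the ingredients needed by Grigor'yan--Saloff-Coste \cite[Proposition 4.4]{GSC02} (uniform volume doubling and uniform on-diagonal/Gaussian upper bound for $p_{\M_j}$) hold for the sequence $\{\M_j\}$ with constants independent of $j$: volume doubling is immediate from \eqref{dist-equi} together with $\mu_{\M_j}(B) \le \mu_{\mathcal{A}}(B) \le 2^{d}\mu_{\M_j}(B)$; the Gaussian upper bound with $j$-independent constants is precisely Lemma \ref{uhk}. Applied to $x = 0$ and the Riemannian ball, \cite[Proposition 4.4]{GSC02} then yields
\[
P^{0}_{\M_j}\!\left(\tau_{B^{\M_j}_{\textup{Riem}}(0, R')} \le t\right) \le C_1 \exp\!\left(-\frac{(R')^2}{C_2 \, t}\right),
\]
with $C_1, C_2$ independent of $j$ and $R'$.

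Second, I would transfer this estimate from the Riemannian ball to the Euclidean ball via \eqref{dist-equi}. Since $d^{\M_j}_{\textup{Riem}}(0,y) \le 2 d(0,y)$, every Euclidean ball $B(0,R)$ is contained in the Riemannian ball $B^{\M_j}_{\textup{Riem}}(0, 2R)$. Hence $\tau_{B(0,R)} \le \tau_{B^{\M_j}_{\textup{Riem}}(0, 2R)}$, and so
\[
\sup_{0 \le j \le \infty} P^{0}_{\M_j}\!\left(\tau_{B(0, R)} \le t\right) \le C_1 \exp\!\left(-\frac{4R^2}{C_2 \, t}\right).
\]

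Third, multiplying by $R^d$ and letting $R \to \infty$ gives the conclusion, because for any fixed $t > 0$ the Gaussian factor $\exp(-4R^2/(C_2 t))$ decays faster than any power of $R$.

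The main (mild) obstacle is verifying that the constants in the GSC exit-time estimate can genuinely be taken independent of $j$. This is however transparent from the proof of Lemma \ref{Lem-1}: the GSC argument only uses the two uniform ingredients noted above (volume doubling and Gaussian upper heat kernel), and both are available with $j$-independent constants. The switch from Riemannian to Euclidean balls is harmless by \eqref{dist-equi}. No new estimates beyond those already established in Lemmas \ref{uhk} and \ref{Lem-1} are required.
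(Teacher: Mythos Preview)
Your overall strategy---reduce to a Gaussian exit-time bound uniform in $j$ via Lemma \ref{uhk} and \eqref{dist-equi}---is sound and more direct than the paper's, but Step 2 contains a monotonicity slip. From $B(0,R) \subset B^{\M_j}_{\textup{Riem}}(0,2R)$ you correctly deduce $\tau_{B(0,R)} \le \tau_{B^{\M_j}_{\textup{Riem}}(0,2R)}$, but this gives $P^0_{\M_j}(\tau_{B(0,R)} \le t) \ge P^0_{\M_j}(\tau_{B^{\M_j}_{\textup{Riem}}(0,2R)} \le t)$, the wrong direction. The fix is to use the other half of \eqref{dist-equi}: $d(0,y) \le d^{\M_j}_{\textup{Riem}}(0,y)$ gives $B^{\M_j}_{\textup{Riem}}(0,R) \subset B(0,R)$, hence $P^0_{\M_j}(\tau_{B(0,R)} \le t) \le P^0_{\M_j}(\tau_{B^{\M_j}_{\textup{Riem}}(0,R)} \le t) \le C_1 \exp(-R^2/(C_2 t))$, which is what you need. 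One further remark: \cite[Proposition 4.4]{GSC02}, as invoked in Lemma \ref{Lem-1}, bounds hitting probabilities of compact sets; to use it for an exit time you should identify $\tau_{B^{\M_j}_{\textup{Riem}}(0,R)}$ with the hitting time of the compact Riemannian sphere of radius $R$ (legitimate by path continuity), or alternatively derive the exit-time Gaussian bound directly from Lemma \ref{uhk} by the standard argument.

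By contrast, the paper's proof does not quote a packaged exit-time estimate. It invokes \cite[Theorem 3.7]{GSC02} to write $P^0_{\M_j}(\tau_{B(0,R)} \le t) \le \ca_{\M_j}(K_R, O_R) \int_0^t \sup_y p_{\M_j, B(0,R)}(s,0,y)\,ds$ for concentric annuli $K_R \subset O_R$ around the sphere of radius $R$, then checks $\ca_{\M_j}(K_R,O_R) \le C R^d$ uniformly in $j$ (from the uniform comparability of the Riemannian metrics, gradients and volumes with the Euclidean ones) and bounds the killed heat kernel via Lemma \ref{uhk}. Your corrected route is shorter; the paper's route keeps the capacity and heat-kernel inputs explicit rather than hidden inside a citation.
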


\begin{proof}
In this proof, we regard each $\M_j$ as a Riemannian manifold. 
Let us define two annuli \[ K_R := \overline{B}(0, R+1) \setminus B(0, R) \]
\[\subset O_R := B(0, R+d) \setminus \overline{B}(0, R-d). \]

We follow \cite[(1.9)]{GSC02} for the definition of $\ca_{\M_j}(K_R, O_R)$, specifically,  
\[ \ca_{\M_j}(K_R, O_R) \]
\[:= \inf\left\{\int_{O_R} \left|\nabla_{\M_j} \phi\right| d\mu_{\M_j} : \phi = 1 \textup{ on } K_R, \ \phi \textup{ has a compact support  in } O_R \right\}, \]
where the gradient $\nabla_{\M_j} \phi$ is taken {\it with respect to the Riemannian metric on $\M_j$}, but it differs from the corresponding gradient taken {\it with respect to the Euclid metric on $\R^d$} only by positive constants which are independent from $j$. 
Therefore, there exists a constant $C_d$ such that for every $j$ and every $\phi$ such that $\phi = 1$  on $K_R$ and $\phi$ has a compact support in $O_R$, 
\[ \int_{O_R} \left|\nabla_{\M_j} \phi\right| d\mu_{\M_j} \le C_d \int_{O_R} \left|\nabla_{\mathcal A} \phi\right| d\mu_{\mathcal A}. \]

Therefore we have that 
\[ \ca_{\M_j}(K_R, O_R) \le C_d  \ca_{\mathcal A}(K_R, O_R) \le C_d \mu_{\mathcal A}(O_R) d^{\mathcal A}_{\text{Riem}}\left(K_R, \R^d \setminus O_R\right)^2 \]
\[ \le C_{d, 1} \mu_{\M_j}(O_R) d^{\M_j}_{\text{Riem}}\left(K_R, \R^d \setminus O_R\right)^2, \]
and hence, 
\[ \sup_{0 \le j \le +\infty} \ca_{\M_j}(K_R, O_R) \le C_{d,2} R^d. \]
In the above two displays, $C_{d,i}, i = 1,2$, are constants depending only on $d$, and each of the constants does not depend on $0 \le j \le +\infty$ or $R > 0$. 

Since $K_R$ and $O_R$ are annuli defined with respect to the Euclid distance, 
we can show that 
\[ \ca_{\M_j}(K_R, O_R) > 0 \]   
in the same manner as in the proof of 
\[ \ca(K_R, O_R) > 0, \] 
where the capacity is defined on the Euclid space equipped with the Euclid distance and the Lebesgue measure. 
Therefore we can apply  \cite[Theorem 3.7]{GSC02} to this case and it holds that  
\[ P^{0}_{\M_j}\left( \tau_{B(0, R)} \le t  \right) \le \ca_{\M_j}(K_R, O_R) \int_0^t \sup_{y \in O_R \setminus K_R} p_{\M_j, B(0, R)}(s,0,y) ds. \]

Now it suffices to give a uniform upper bound for $p_{\M_j, B(0, R)}(s,0,y)$, $y \in O_R \setminus K_R$.
By Lemma \ref{uhk} (see also \cite[Remark 3.8]{GSC02}), we have that   
\[ p_{\M_j, B(0, R)}(s,0,y) \le p_{\M_j}(s,0,y) \le \frac{C_d}{s^{d/2}} \exp\left(-\frac{(R-1)^2}{2s}\right).  \]  
This leads to the assertion. 
\end{proof} 

\begin{Lem}\label{LY}
For each fixed $k \ge 1$ and each $t > 0$, 
there is a constant $R_{\M_k}(t)$ such that the following two inequalities hold:\\
(i) 
\[ E_{\M_k}^0 \left[V_{\epsilon}\left(t \wedge \tau_{B\left(0, R_{\M_k}(t)\right)}\right)  \right] \ge (1 - \exp(-t)) E_{\M_k}^0 \left[ \V \right].  \]\\
(ii) 
\[ \sup_{j}  E_{\M_j}^0 \left[ \V, \ t > \tau_{B(0, R_{\M_k}(t))}\right] \le 2^{-k}. \]
\end{Lem}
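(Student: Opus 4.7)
The plan is to obtain the two inequalities separately. For (i), which concerns only the single space $\M_k$, I would use dominated convergence as $R \to \infty$; for (ii), which requires a uniform-in-$j$ tail estimate, I would apply Fubini's theorem together with Lemmas \ref{Lem-1} and \ref{Lem-2}. The desired constant is then the maximum of the two radii obtained.

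For part (i), first note that $E^0_{\M_k}[\V] < \infty$: the bound $\V \le \mu_{\M_k}\bigl(B(0,\epsilon + \sup_{s \le t} d(0,X_s))\bigr)$, the uniform volume estimate $\mu_{\M_j}(B(0,r)) \le C_d(2r)^d$ established after Definition \ref{Riem-def}, and the sub-Gaussian upper bound of Lemma \ref{uhk} together give all moments of the maximal displacement, hence integrability of $\V$. Since each $\M_k$ is stochastically complete, $\tau_{B(0,R)} \to \infty$ as $R \to \infty$, $P^0_{\M_k}$-a.s., so $V_\epsilon(t \wedge \tau_{B(0,R)}) \uparrow \V$. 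By monotone (or dominated) convergence, $E^0_{\M_k}[\V - V_\epsilon(t \wedge \tau_{B(0,R)})] \to 0$, so one can pick $R_1 = R_1(k,t)$ making this remainder at most $e^{-t} E^0_{\M_k}[\V]$, which is exactly (i).

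For part (ii), I would apply Fubini's theorem (as in \eqref{base}) to write
\[
E^0_{\M_j}\bigl[\V,\, t > \tau_{B(0,R)}\bigr] = \int_{\R^d} P^0_{\M_j}\bigl(T_{B(y,\epsilon)} \le t,\ \tau_{B(0,R)} < t\bigr)\, \mu_{\M_j}(dy),
\]
and split the integral at $\{d(0,y) \le R/2\}$ and its complement. On the inner region, the integrand is bounded by $P^0_{\M_j}(\tau_{B(0,R)} < t)$ and the volume is at most $C_d R^d$; Lemma \ref{Lem-2} then gives that this contribution tends to $0$ \emph{uniformly in $j$}. On the outer region, the integrand is bounded by $P^0_{\M_j}(T_{B(y,\epsilon)} \le t)$, and Lemma \ref{Lem-1} (applied with $R$ replaced by $R/2$) shows that this contribution also tends to $0$ uniformly in $j$. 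Choosing $R_2 = R_2(k,t)$ so that both pieces are at most $2^{-k-1}$ yields (ii).

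Finally I would set $R_{\M_k}(t) := \max(R_1, R_2)$, which satisfies both estimates. The main obstacle is the uniformity in $j$ demanded by (ii); this is entirely delegated to Lemmas \ref{Lem-1} and \ref{Lem-2}, so once those are available the real content of the present lemma is the Fubini-based decomposition that reduces to them together with the straightforward exhaustion argument for (i).
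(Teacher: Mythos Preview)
Your proposal is correct and matches the paper's proof essentially line for line: part (i) is the same exhaustion argument (you are simply more explicit about why $E^0_{\M_k}[\V]<\infty$), and part (ii) is the same Fubini decomposition followed by Lemmas \ref{Lem-1} and \ref{Lem-2}, the only cosmetic difference being that the paper splits the integral at radius $\epsilon + R$ rather than $R/2$. Your remark that both inequalities are monotone in $R$, so one may take $R_{\M_k}(t)=\max(R_1,R_2)$, is a clean way to combine the two choices.
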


\begin{proof}
Fix $k \ge 1$ and $t > 0$. 
Since 
\[ \lim_{R \to \infty} P^0_{\M_k}\left(\tau_{B(0, R)} < t \right) = 0, \] 
we have that for each fixed $t > 0$ and $k \ge 1$, 
\[ \lim_{R \to \infty} E^0_{\M_k}\left[\V, \tau_{B(0, R)} < t \right] = 0. \]

Hence it holds that if we take $R = R_{\M_k} (t)$ sufficiently large for each fixed $t > 0$ and $k \ge 1$, then, 
\begin{align*} 
E^0_{\M_k} \left[V_{\epsilon}(t \wedge \tau_{B(0, R_{\M_k}(t))}) \right] &\ge E_{\M_k}^0 \left[\V, \tau_{B(0, R_{\M_k}(t))} \ge t \right] \\
&\ge (1 - \exp(-t)) E^0_{\M_k} \left[ \V \right]. 
\end{align*} 
Thus (i) holds if we take $R_{\M_k} (t)$ sufficiently large.

Furthermore, it holds that for every $R > 0$, 
\begin{align*} 
E_{\M_j}^0 \left[ \V, \ t \ge \tau_{B(0, R)}\right] 
&= \int_{\R^d} P^{0}_{\M_j}\left(T_{B(x, \epsilon)} \vee \tau_{B(0, R)} \le t \right) d\mu_{\M_j}(x)  \\
&= \int_{B(0, \epsilon + R)}  + \int_{\R^d \setminus B(0, \epsilon + R)} P^{0}_{\M_j}\left(T_{B(x, \epsilon)} \vee \tau_{B(0, R)} \le t \right) d\mu_{\M_j}(x)  \\
&\le \mu_{\M_j}(B(0, \epsilon + R)) P^{0}_{\M_j}\left( \tau_{B(0, R)} \le t  \right)   \\
&\ \ + \int_{\R^d \setminus B(0, \epsilon + R)} P^{0}_{\M_j} \left(  T_{B(x, \epsilon)} \le t  \right) d\mu_{\M_j}(x). 
\end{align*} 

By Lemmas \ref{Lem-1} and \ref{Lem-2}, 
we have that if $R_{\M_k}(t)$ is sufficiently large, then, 
\[  \sup_{j}  \mu_{\M_j}(B(0, \epsilon + R)) P^{0}_{\M_j}\left( \tau_{B(0, R)} \le t  \right) \le 2^{-k-1}, \]
and
\[  \sup_{j}  \int_{\R^d \setminus B(0, \epsilon + R)} P^{0}_{\M_j} \left(  T_{B(x, \epsilon)} \le t  \right) d\mu_{\M_j}(x) \le 2^{-k-1} \]
Hence, if $R_{\M_k}(t)$ is sufficiently large, then, 
\[ \sup_{j} E_{\M_j}^0 \left[ \V, \ t \ge \tau_{B(0, R_{\M_k}(t))}\right] \le 2^{-k}. \] 
Thus  (ii) holds if we take $R_{\M_k} (t)$ sufficiently large.
\end{proof}

\begin{Lem}[Specifying $t_k$]\label{peculiar-k}
Let $R_{\M_{k}}(t)$ be as in the above lemma. Then,\\
(i) If $k$ is even, then,  we can take $t_k$ such that 
\[ \frac{E_{\M_k}^0 \left[ V_{\epsilon}\left(t_k \wedge \tau_{B(0, R_{\M_{k}}(t_k))}\right) \right]}{t_k} \ge (1 - \exp(-k)) \left(c(\mathcal{B}, \epsilon) - 2^{-k}\right). \]
(ii) If $k$ is odd, then,  we can take $t_k$  such that 
\[ \frac{E_{\M_k}^0 \left[ V_{\epsilon}(t_k) \right]}{t_k} \le c(\mathcal{A}, \epsilon) + 2^{-k}. \] 
\end{Lem}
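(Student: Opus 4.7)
The plan is to combine the convergence results of Lemma \ref{conv-thm-finite} with the truncation estimate of Lemma \ref{LY}(i); both parts reduce to choosing $t_k$ far enough out on the real line that a limit is nearly attained.

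For part (ii), since $k$ is odd, Lemma \ref{conv-thm-finite}(i) gives
\[
\lim_{t \to \infty} \frac{E^0_{\M_k}[V_{\epsilon}(t)]}{t} \;=\; c(\mathcal{A}, \epsilon).
\]
By the definition of the limit, any threshold $c(\mathcal{A},\epsilon)+2^{-k}$ is eventually exceeded only from below, so one can simply pick $t_k$ large enough that $E^0_{\M_k}[V_{\epsilon}(t_k)]/t_k \le c(\mathcal{A},\epsilon)+2^{-k}$. No truncation is needed here.

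For part (i), since $k$ is even, Lemma \ref{conv-thm-finite}(ii) gives
\[
\lim_{t \to \infty} \frac{E^0_{\M_k}[V_{\epsilon}(t)]}{t} \;=\; c(\mathcal{B}, \epsilon),
\]
so I would select $t_k$ large enough to satisfy simultaneously (a) $t_k \ge k$, which forces $1-\exp(-t_k) \ge 1-\exp(-k)$, and (b) $E^0_{\M_k}[V_{\epsilon}(t_k)]/t_k \ge c(\mathcal{B},\epsilon)-2^{-k}$. Applying Lemma \ref{LY}(i) at $t = t_k$ then yields
\[
E^0_{\M_k}\!\left[V_{\epsilon}\!\left(t_k \wedge \tau_{B(0, R_{\M_k}(t_k))}\right)\right] \;\ge\; (1-\exp(-t_k))\, E^0_{\M_k}[V_{\epsilon}(t_k)],
\]
and dividing by $t_k$ and combining (a) and (b) gives the claimed lower bound $(1-\exp(-k))(c(\mathcal{B},\epsilon)-2^{-k})$.

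There is no substantive obstacle; the lemma is essentially bookkeeping that packages the $t \to \infty$ limits from Lemma \ref{conv-thm-finite} together with the already-established cutoff estimate from Lemma \ref{LY}(i) into a form usable at the next stage of the construction. The only point to keep in mind is that $R_{\M_k}(t_k)$ is defined \emph{after} $t_k$ is chosen (in the sense of Lemma \ref{LY}, which allows $R_{\M_k}(t)$ to depend on $t$), so one first fixes $t_k$ satisfying (a) and (b), and only then invokes Lemma \ref{LY}(i) at that specific value of $t$.
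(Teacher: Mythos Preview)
Your proposal is correct and follows essentially the same route as the paper: for (ii) you pick $t_k$ large using Lemma~\ref{conv-thm-finite}(i), and for (i) you combine Lemma~\ref{LY}(i) with Lemma~\ref{conv-thm-finite}(ii) after choosing $t_k$ large. Your explicit handling of the condition $t_k \ge k$ to pass from $1-\exp(-t_k)$ to $1-\exp(-k)$ is in fact slightly more careful than the paper's own write-up, which silently replaces $\exp(-t)$ by $\exp(-k)$.
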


\begin{proof}
(i) Let $k$ be an even integer. 
By Lemma \ref{LY}, we have that for every $t > 0$, 
\[ \frac{E_{\M_k}^0 \left[ V_{\epsilon}\left(t \wedge \tau_{B(0, R_{\M_{k}}(t))}\right) \right]}{t} \ge (1 - \exp(-k)) \frac{E_{\M_k}^0 \left[ \V \right]}{t}. \]

By Lemma \ref{conv-thm-finite}, we have that for every sufficiently large $t > 0$, 
\[  \frac{E_{\M_k}^0 \left[ \V \right]}{t} \ge c(\mathcal{B}, \epsilon) - 2^{-k}. \]

Assertion (i) follows from these two results. 

(ii) By Lemma \ref{conv-thm-finite}, we have that for every sufficiently large $t > 0$, 
\[  \frac{E_{\M_k}^0 \left[ \V \right]}{t} \le c(\mathcal{A}, \epsilon) + 2^{-k}. \]
\end{proof} 

For each non-negative integer $k$, we now let $R_{k} := R_{\M_{k}}(t_{k})$ where $t_k$ is the constant appearing in Lemma \ref{peculiar-k}.
Thus $\{\M_j\}_j$ are explicitly defined for every $0 \le j \le +\infty$. 

\begin{Prop}[stability]\label{stability}  
We have that for every non-negative integer $k$, 
\[ \frac{E^0_{\M_{j}}\left[V_{\epsilon}(t_{2k-1})\right]}{t_{2k-1}} \le c(\mathcal{A}, \epsilon) + 2^{-(2k-1)}, \ \forall j \ge 2k-1. \] 
\[ \frac{E^0_{\M_{j}}\left[V_{\epsilon}(t_{2k})\right]}{t_{2k}} \ge c(\mathcal{B}, \epsilon) - 2^{-2k}, \ \forall j \ge 2k. \] 
We remark that $j$ can take $+\infty$. 
\end{Prop}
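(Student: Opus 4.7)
The plan is to leverage the tower structure of $(\M_j)_j$: by Definitions \ref{Rk-def} and \ref{Riem-def}, for every $j \ge k$ the function $G_j$ coincides with $G_k$ on $[0, R_k]$, so the volume measure $\mu_{\M_j}$ and the Dirichlet form $(\E_{\M_j}, \F_{\M_j})$ agree with $\mu_{\M_k}$ and $(\E_{\M_k}, \F_{\M_k})$ on $B(0, R_k)$. Arguing as in Lemma \ref{fm-check} via \cite[Lemma 6.3]{ST92}, this will imply that for every $j \ge k$ the laws of $X^{\M_j}_{\cdot \wedge \tau_{B(0, R_k)}}$ under $P^0_{\M_j}$ and of $X^{\M_k}_{\cdot \wedge \tau_{B(0, R_k)}}$ under $P^0_{\M_k}$ coincide. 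Since $V_{\epsilon}(t \wedge \tau_{B(0, R_k)})$ is a measurable functional of the stopped path, this yields
\[ E^0_{\M_j}\left[V_{\epsilon}\left(t_k \wedge \tau_{B(0, R_k)}\right)\right] = E^0_{\M_k}\left[V_{\epsilon}\left(t_k \wedge \tau_{B(0, R_k)}\right)\right], \quad \forall j \ge k. \]

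For the lower bound (even $k$), since $V_{\epsilon}$ is non-decreasing in $t$, I would write
\[ E^0_{\M_j}\left[V_{\epsilon}(t_k)\right] \ge E^0_{\M_j}\left[V_{\epsilon}\left(t_k \wedge \tau_{B(0, R_k)}\right)\right] = E^0_{\M_k}\left[V_{\epsilon}\left(t_k \wedge \tau_{B(0, R_k)}\right)\right], \]
divide by $t_k$, and apply Lemma \ref{peculiar-k}(i) (and enlarge $t_k$ in Lemma \ref{peculiar-k} if needed so that $1-\exp(-k)$ is close enough to $1$ to produce the bound $c(\mathcal{B},\epsilon) - 2^{-2k}$). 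For the upper bound (odd $k$), I would split $E^0_{\M_j}[V_{\epsilon}(t_k)]$ according to $\{t_k \le \tau_{B(0, R_k)}\}$ and its complement; on the first event $V_{\epsilon}(t_k) = V_{\epsilon}(t_k \wedge \tau_{B(0, R_k)})$, so the first summand is at most $E^0_{\M_k}[V_{\epsilon}(t_k)]$ by the identity in law above and monotonicity, while the second summand is at most $2^{-k}$ by Lemma \ref{LY}(ii) (precisely because $R_k$ was taken to be $R_{\M_k}(t_k)$). Dividing by $t_k$ and invoking Lemma \ref{peculiar-k}(ii) yields the claimed upper bound, after enlarging $t_k$ if necessary to absorb the residual term $2^{-k}/t_k$.

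The principal delicate point is the first paragraph: justifying the identity in law of the stopped processes from the local agreement of the Dirichlet forms and the measures. This relies on the strong locality assumption together with the uniqueness of the part process on an open set (as in \cite[Theorem 4.4.3]{FOT11}). Everything else is a routine combination of monotonicity of $V_{\epsilon}$ with Lemmas \ref{LY} and \ref{peculiar-k}, and the whole construction hinges on the choice $R_k = R_{\M_k}(t_k)$, engineered precisely so that the tail estimate of Lemma \ref{LY}(ii) becomes uniform in $j$.
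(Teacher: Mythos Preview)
Your proposal is correct and follows essentially the same route as the paper: the key identity $E^0_{\M_j}[V_\epsilon(t_k\wedge\tau_{B(0,R_k)})]=E^0_{\M_k}[V_\epsilon(t_k\wedge\tau_{B(0,R_k)})]$ for $j\ge k$ from local agreement of the Dirichlet forms, then splitting/bounding via Lemmas \ref{LY} and \ref{peculiar-k}. Your lower bound for even $k$ is in fact slightly cleaner than the paper's---you use $V_\epsilon(t_k)\ge V_\epsilon(t_k\wedge\tau_{B(0,R_k)})$ directly, whereas the paper first restricts to the event $\{t_k\le\tau_{B(0,R_k)}\}$ and then subtracts the complement using Lemma \ref{LY}(ii); both reach the same conclusion, and both (as you note) require enlarging $t_k$ to absorb the residual factors $(1-\exp(-k))$ and $2^{-k}/t_k$ into the stated bounds.
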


\begin{proof}
Due to the definition of $\M_j$, 
if we let $X^{(k)}_s := X_{s \wedge \tau_{B(0, R_k)}}$, 
then the law of the process $X^{(k)}$ under $P^0_{\M_j}$ is identical with each other for every $j \ge k$.
Hence we have that for every $j \ge k$, 
\begin{equation}\label{local-det} 
 E^0_{\M_j} \left[ V_{\epsilon}(t_k \wedge \tau_{B(0, R_k)})\right] 
= E^0_{\M_j} \left[\mu\left(\bigcup_{0 \le s \le t_k} B(X^{(k)}_s, \epsilon) \right)\right] = E^0_{\M_k} \left[ V_{\epsilon}(t_k \wedge \tau_{B(0, R_k)})\right]. 
\end{equation}

Assume that $k$ is odd. 
By Lemma \ref{LY},  \eqref{local-det} and  Lemma \ref{peculiar-k}, 
\begin{align*} 
E^{0}_{\M_{j}}[V_{\epsilon}(t_k)] &= E^{0}_{\M_{j}}\left[V_{\epsilon}(t_k), t_{k} > \tau_{B(0, R_{k})}\right] + E^{0}_{\M_{j}}\left[V_{\epsilon}(t_k), t_{k} \le \tau_{B(0, R_{k})}\right].  \\
&\le 2^{-k} + E^{0}_{\M_{j}}\left[V_{\epsilon}\left(t_{k} \wedge \tau_{B(0, R_{k})}\right) \right] \\
&= 2^{-k} + E^{0}_{\M_{k}}\left[V_{\epsilon}\left(t_{k} \wedge \tau_{B(0, R_{k})}\right) \right] \\
&\le 2^{-k} + E^{0}_{\M_{k}}\left[V_{\epsilon}(t_{k})\right] \\
&\le 2^{-k}(1 + t_k) + t_k c(\mathcal{A}, \epsilon). 
\end{align*} 

Assume that $k$ is even. 
By \eqref{local-det} and Lemmas \ref{peculiar-k} and \ref{LY},   
\begin{align*} 
E^{0}_{\M_{j}}\left[V_{\epsilon}(t_k)\right] &\ge E^{0}_{\M_{j}}\left[V_{\epsilon}(t_{k}), t_{k} \le \tau_{B(0, R_{k})}\right] \\
&= E^{0}_{\M_{k}}\left[V_{\epsilon}\left(t_{k} \wedge \tau_{B(0, R_{k})}\right), t_{k} \le \tau_{B(0, R_{k})}\right] \\
&= E^{0}_{\M_{k}}\left[V_{\epsilon}\left(t_{k} \right), t_{k} \le \tau_{B(0, R_{k})}\right] \\
&=  E^{0}_{\M_{k}}\left[V_{\epsilon}(t_{k} \wedge \tau_{B(0, R_{k})})\right] -  E^{0}_{\M_{k}}\left[V_{\epsilon}(t_k), t_{k} > \tau_{B(0, R_{k})}\right] \\
&\ge t_k (1 - \exp(-k)) (c(\mathcal{B}, \epsilon) - 2^{-k}) - 2^{-k}. 
\end{align*}
 
\end{proof}

Hence, 
\[ \liminf_{t \to \infty} \frac{E^{0}_{\M_{\infty}}\left[ \V\right]}{t} \le c(\mathcal{A}, \epsilon) < c(\mathcal{B}, \epsilon) \le \limsup_{t \to \infty} \frac{E^{0}_{\M_{\infty}}\left[ \V \right]}{t}. \]

Thus the proof of Theorem \ref{thm-fluc-1} is completed. 
\end{proof}

\begin{Rem}
We are not sure whether the above proof is applicable to the two-dimensional case with small modifications. 
\end{Rem}


\section{Further results for processes on bounded modifications}

Before we proceed to the proof of Theorem \ref{Lv2-mod}, we prepare a lemma for the standard Brownian motion on the Euclid space.  

\begin{Lem}\label{Lv2-lem-1}
Let $\widetilde\epsilon, R_0  > 0$. 
Then, 
\begin{equation}\label{return-from-remote} 
\limsup_{t \to \infty} t^{1+\widetilde\epsilon} \ \sup_{x \in \R^d \setminus B(0,  t^{(1+ \widetilde\epsilon)/(d-2)})} P^{x}_{\textup{BM}} \left(T_{B(0, R_0)} \le t\right) < +\infty. 
\end{equation} 
\end{Lem}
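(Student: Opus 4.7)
The plan is to exploit the classical formula for the probability that a transient Brownian motion ever hits a fixed ball, and to note that the lemma only requires an upper bound on $P^x_{\textup{BM}}(T_{B(0,R_0)} \le t)$, which is trivially dominated by $P^x_{\textup{BM}}(T_{B(0,R_0)} < \infty)$. Because we are in dimension $d\ge 3$ (indeed $d\ge 6$ in the context of Theorem \ref{Lv2-mod}), the standard formula based on the fundamental solution of the Laplacian (or equivalently on the Newtonian potential / optional stopping applied to the superharmonic function $y \mapsto |y|^{-(d-2)}$) gives, for every $x$ with $|x| \ge R_0$,
\begin{equation*}
P^x_{\textup{BM}}\bigl(T_{B(0,R_0)} < \infty\bigr) \;=\; \left(\frac{R_0}{|x|}\right)^{d-2}.
\end{equation*}

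From here the proof should be essentially a one-line calculation. For $x \in \R^d \setminus B(0, t^{(1+\widetilde\epsilon)/(d-2)})$ one has $|x|^{d-2} \ge t^{1+\widetilde\epsilon}$, hence
\begin{equation*}
P^x_{\textup{BM}}\bigl(T_{B(0,R_0)} \le t\bigr) \;\le\; P^x_{\textup{BM}}\bigl(T_{B(0,R_0)} < \infty\bigr) \;\le\; \frac{R_0^{d-2}}{t^{1+\widetilde\epsilon}}.
\end{equation*}
Multiplying by $t^{1+\widetilde\epsilon}$ and taking the supremum over the relevant $x$ gives a bound of $R_0^{d-2}$, which is finite and independent of $t$, yielding \eqref{return-from-remote}.

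I expect no real obstacle here: the argument hinges only on the well-known hitting-probability identity for transient Brownian motion, which holds whenever $d \ge 3$. The main thing to double-check is that $t^{(1+\widetilde\epsilon)/(d-2)} \ge R_0$ for $t$ large enough (which is obvious, and harmless since the $\limsup$ is as $t\to\infty$, so one may restrict attention to such $t$). No refined estimate involving the tail of $T_{B(0,R_0)}$ is needed, because the factor $t^{1+\widetilde\epsilon}$ is already absorbed by the polynomial decay of the ever-hitting probability in $|x|$ when $|x|$ is chosen as large as $t^{(1+\widetilde\epsilon)/(d-2)}$.
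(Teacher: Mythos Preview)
Your argument is correct and in fact more elementary than the paper's own proof. The paper does not invoke the exact hitting formula $P^x_{\textup{BM}}(T_{B(0,R_0)}<\infty)=(R_0/|x|)^{d-2}$; instead it applies its general Lemma \ref{lem-fund} (i) with $T=1$ and then the Gaussian heat kernel bound to obtain
\[
P^x_{\textup{BM}}(T_{B(0,R_0)}\le t)\;\le\; C\int_0^{t} s^{-d/2}\exp\!\left(-\frac{(|x|-R_0/2)^2}{s}\right)ds,
\]
and after the substitution $u=(|x|-R_0/2)/s^{1/2}$ bounds this by $C'\,(|x|-R_0/2)^{2-d}$. Both routes therefore land on the same decay $|x|^{2-d}$, and the conclusion follows identically once $|x|\ge t^{(1+\widetilde\epsilon)/(d-2)}$. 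Your approach is shorter because it exploits the exact radial potential theory available for standard Brownian motion, whereas the paper's derivation stays within the toolkit (Lemma \ref{lem-fund} plus FHK$(2)$) that it uses uniformly throughout and that would also apply to processes for which no closed-form hitting probability is known. Either way the lemma is established; nothing is missing from your argument.
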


\begin{proof}
By applying Lemma \ref{lem-fund} (i) to the case that $\eta = \epsilon/2$ and $T = 1$, we have that  
\begin{align*} 
P^{x}_{\textup{BM}} (T_{B(0, R_0)} \le t) 
&\le \frac{\int_{0}^{t+1} P^x (X_s \in B(0, R_0/2)) ds}{\inf_{w \in \overline{\partial}B(0, R_0)} \int_{0}^{1} P^w (X_s \in B(0, R_0/2)) ds} \\
&\le C\int_0^t s^{- d/2} \exp\left(-\frac{(|x| - R_0 /2)^2}{s}\right) ds, 
\end{align*}  
where in the last inequality we have used FHK(2) for $\R^d$.  

By changing of variable $u = (|x| - R_0 /2)/s^{1/2}$,
then, we have that 
$$ds = -2 (|x| - R_0 /2)^2 u^{-3} du$$ and $$s^{- d/2} = u^d (|x| - R_0 /2)^{-d}.$$ 

It holds that 
\[ \int_0^t s^{- d/2} \exp\left(-\frac{(|x| - R_0 /2)^2}{s}\right) ds \le 2 \int_{0}^{\infty} u^{d-3} \exp(-u^2) du \left(|x| - R_0 /2\right)^{2-d}.  \]
By recalling $|x| \ge  t^{(1+ \widetilde\epsilon)/(d-2)}$, we have the assertion.    
\end{proof}

\begin{proof}[Proof of Theorem \ref{Lv2-mod}]  
Let $d \ge 6$. 
Hereafter, for ease of notation, $\left| W \right| $ denotes the volume of a random Borel subset $W$ of $\R^d$ under $\mu$ or the Lebesgue measure.  
If $W$ is a random subset under $\M$, then we choose $\mu$, and, if $W$ is a random subset with respect to the standard Brownian motion, then we choose the Lebesgue measure, 
Assume that $D \subset B(0, R_0)$.    
Let 
\[ W_{s, t} := \bigcup_{u \in [s, t]} B(X_u, \epsilon), \ 0 \le s \le t. \]
We remark that by \eqref{upper-mean} and FHK(2), 
\begin{align}\label{0-1-finite} 
\sup_{y \in \R^d} E^y_{\M} \left[ \left| W_{0,1} \right| \right] &= \sup_{y \in \R^d} E^y_{\M} \left[ V_{\epsilon}(1) \right] \notag\\
 &\le C \frac{1}{\inf_{\epsilon/2\le d(z,w) \le 3\epsilon/2} \int_0^1 p(s,z,w) ds} < +\infty. 
\end{align} 

Let 
\[ g(t) := t^{(1+ \widetilde\epsilon)/(d-2)} \] 
and 
\[ \widetilde D(t) :=  B(0, g(t)), \ t > 0.  \]

By the assumption, 
there exists a constant $C$ such that $\mu_{\M} (B(0,r)) \le C r^d$ holds for every $r > 0$. 
Using this, $d \ge 6$, FHK$(2)$, and \eqref{return-from-remote},  
it holds that  
if we choose sufficiently small $\widetilde\epsilon > 0$ in Lemma \ref{Lv2-lem-1},   
then, for some $a > 0$,   
\begin{equation}\label{M-exit}  
P_{\M}^0 \left(\tau_{\widetilde D(t)} > t/2 \right) \le P^0_{\M} \left(X_{t/2} \in B(0, g(t))\right) \le O(t^{-1-a}), 
\end{equation} 
and, 
\begin{align}\label{M-return}  
\sup_{y \in \R^d  \setminus \widetilde D(t)} P_{\M}^{y} \left(T_{D} < +\infty\right) 
&= \sup_{y \in \R^d \setminus \widetilde D(t)} P_{\textup{BM}}^{y} \left(T_{D} < +\infty\right) \notag \\
&\le O(g(t)^{2-d}) = O(t^{-1-a}). 
\end{align}

Henceforth we fix $h \in (0, 1)$.    
For $y \in \R^d \setminus D$, let 
\[ F_{\M} \left(y, s\right) := E_{\M}^y \left[ \left| W_{s, s+h} \setminus W_{0,s} \right|, \ T_D > s \right]. \]
We also define this quantity by replacing the case that we deal with $\M$ with the case that we deal with the standard Brownian motion. 
\[ F_{\textup{BM}} \left(y, s\right) := E_{\textup{BM}}^y \left[ \left| W_{s, s+h} \setminus W_{0,s} \right|, \ T_D > s \right]. \]
Furthermore we let 
\[ F_{\textup{BM}}(s) := E_{\textup{BM}}^0 \left[ \left| W_{s,s+h} \setminus W_{0,s} \right| \right]. \]

We will show that 
\begin{Lem}
\begin{equation}\label{sum-conv} 
\left| \sum_{n = 0}^{\infty} E^0_{\M} \left[\left|W_{nh, (n+1)h} \setminus W_{0,nh}\right| \right] - F_{\textup{BM}}(nh) \right| < +\infty.  
\end{equation} 
\end{Lem}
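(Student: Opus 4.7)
I propose to prove the stronger statement $\sum_n |a_n - b_n| < \infty$, where $a_n := E^0_{\M}\bigl[|W_{nh,(n+1)h} \setminus W_{0,nh}|\bigr]$ and $b_n := F_{\textup{BM}}(nh)$; this clearly implies \eqref{sum-conv}. The aim is the term-by-term estimate $|a_n - b_n| = O(n^{-1-\alpha})$ for some $\alpha > 0$. The principal tools are the strong Markov property at time $nh/2$, the two decay estimates \eqref{M-exit} and \eqref{M-return}, and the uniform moment bound $\sup_{y \in \R^d} E^y[|W_{0,h}|^p] \le C_p$ for every $p \ge 1$, which follows from Gaussian tail bounds on $\max_{s \le h}|X_s - X_0|$ given FHK$(2)$ (and likewise for Brownian motion on $\R^d$).

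\textbf{Decomposition.} Apply strong Markov at $nh/2$ to write $a_n = E^0_{\M}[\Phi_{\M}(X_{nh/2}, W_{0, nh/2})]$ and $b_n = E^0_{\textup{BM}}[\Phi_{\textup{BM}}(X_{nh/2}, W_{0, nh/2})]$, where
\[ \Phi_{\M}(y, A) := E^y_{\M}\bigl[\bigl|W_{nh/2, nh/2+h} \setminus (A \cup W_{0, nh/2})\bigr|\bigr] \]
and $\Phi_{\textup{BM}}$ is defined analogously, the sausages on the right referring to the shifted process started at $y$. Introduce the good event $G_n := \{X_{nh/2} \notin B(0, g(nh))\} \cap \{T_{D'} \circ \theta_{nh/2} = \infty\}$, where $D' := \{z : d(z, D) \le \epsilon\}$ is the closed $\epsilon$-neighbourhood of $D$; on $G_n$ the future sausage $W_{nh,(n+1)h}$ is disjoint from $D$ (so $\mu_{\M}$ agrees with Lebesgue measure there), and by (M2) the shifted process from time $nh/2$ has the same law under $P_{\M}$ and $P_{\textup{BM}}$. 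By \eqref{M-exit} applied with $t = nh$ and \eqref{M-return} applied conditionally on $X_{nh/2}$, one gets $P^0_{\M}(G_n^c) + P^0_{\textup{BM}}(G_n^c) = O(n^{-1-\alpha})$; combined with the moment bound via H\"older's inequality, the contributions of $G_n^c$ to $a_n$ and $b_n$ are each $O(n^{-(1+\alpha)(1-1/p)})$, summable when $p$ is taken large. The same argument yields $|\Phi_{\M}(y, A) - \Phi_{\textup{BM}}(y, A)| = O(n^{-1-\alpha})$ uniformly in $A$ for $y \notin B(0, g(nh))$, reducing the remaining task to controlling the difference of outer expectations of the \emph{fixed} integrand $\Phi_{\textup{BM}}$ under $P^0_{\M}$ and $P^0_{\textup{BM}}$ on $G_n$.

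\textbf{Main obstacle.} The hard part will be controlling this outer mismatch, which arises because the joint distribution of $(X_{nh/2}, W_{0, nh/2})$ under $P^0_{\M}$ differs from that under $P^0_{\textup{BM}}$ whenever the past has visited $D$. On $\{T_D > nh/2\}$ the two joint laws coincide by (M2) so that contribution cancels exactly; on $\{T_D \le nh/2\}$ I plan to apply strong Markov at $T_D$. Since (M2) gives that the triple $(T_D, X_{T_D}, W_{0, T_D})$ has the same law under both measures (using that $0 \notin D$, which we may arrange), the residual comparison reduces to the analogous one started from $X_{T_D} \in \partial D$ with remaining time $nh/2 - T_D$. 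Iterating at the successive returns to and exits from $D$, using \eqref{M-return} after each exit to make the subsequent return probability polynomially small in $n$, and using the transience of $X$ in $d \ge 3$ so that the total number of such excursions has a geometric tail, the cumulative contribution should be $O(n^{-1-\alpha})$. Assembling the three pieces yields $|a_n - b_n| = O(n^{-1-\alpha})$, whence \eqref{sum-conv}.
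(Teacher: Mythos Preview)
Your reductions up to the ``main obstacle'' are fine: the control of $G_n^c$ via \eqref{M-exit}, \eqref{M-return} and H\"older works, and the inner comparison $|\Phi_{\M}(y,A) - \Phi_{\textup{BM}}(y,A)| = O(n^{-(1+\alpha)(1-1/p)})$ for $y \notin \widetilde D(nh)$ is correct. The gap is in the outer comparison, which is where the real difficulty lies.

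Your iteration over returns to and exits from $D$ does not deliver the bound you claim. After exiting $D$ the process sits on $\partial D$, and the probability of returning to $D$ from there is a fixed constant in $(0,1)$; estimate \eqref{M-return} requires the starting point to lie \emph{outside the large ball} $\widetilde D(t)$ and is simply inapplicable at $\partial D$. More seriously, even after the last visit to $D$, the two processes have exited $D$ at different times and positions (because the dynamics inside $D$ differ), so the \emph{entire} subsequent trajectory --- and hence the whole set $W_{0,nh/2}$, not just its part near $D$ --- is shifted, on an event of probability $\Theta(1)$. You give no mechanism to show that this $\Theta(1)$-probability discrepancy in the past set affects $\Phi_{\textup{BM}}(X_{nh/2}, W_{0,nh/2})$ only by $O(n^{-1-\alpha})$; the geometric tail on the number of excursions controls neither the size of the set difference $W^{\M}_{0,nh/2}\triangle W^{\textup{BM}}_{0,nh/2}$ nor its overlap with $W_{nh,(n+1)h}$.

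The paper's argument sidesteps this entirely by applying the strong Markov property at the \emph{random} exit time $\tau := \tau_{\widetilde D(t)}$ from the large ball rather than at the fixed time $nh/2$. On the good event $\{\tau \le t,\ \text{no return to }D\text{ after }\tau\}$ the paper simply discards the early past, replacing $|W_{t,t+h}\setminus W_{0,t}|$ by the larger $|W_{t,t+h}\setminus W_{\tau,t}|$; since the post-$\tau$ trajectory has the same law under $\M$ and under $\textup{BM}$, this reduces directly to $F_{\textup{BM}}(t-\tau)$ with no comparison of pasts needed. Monotonicity of $F_{\textup{BM}}$ then gives the sandwich $F_{\textup{BM}}(nh) - O(n^{-1-a}) \le a_n \le F_{\textup{BM}}(nh/2) + O(n^{-1-a})$, and the lemma follows by invoking Spitzer's theorem (this is where $d\ge 6$ enters a second time) that $\sum_n\bigl(F_{\textup{BM}}(nh) - h\,\ca(\overline B(0,\epsilon))\bigr)$ converges, which also makes $\sum_n\bigl(F_{\textup{BM}}(nh/2)-F_{\textup{BM}}(nh)\bigr)$ converge. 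Note in particular that the paper does \emph{not} establish the pointwise rate $|a_n - b_n| = O(n^{-1-\alpha})$ that you aim for; it is not clear this is even true, and the Spitzer input you omit is what makes the argument go through.
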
 

\begin{proof} 
It holds that 
\[ E^0_{\M} \left[\left|W_{t, t+h} \setminus W_{0,t}\right| \right] \]
\[ = E^0_{\M} \left[\left|W_{t, t+h} \setminus W_{0,t}\right|, \tau_{\widetilde D(t)} > t \right] + E^0_{\M} \left[\left|W_{t, t+h} \setminus W_{0,t}\right|, \tau_{\widetilde D(t)} \le t \right] \]

By the Markov property, \eqref{0-1-finite} and \eqref{M-exit}, 
we have that 
\begin{equation}\label{step-1} 
E^0_{\M} \left[\left|W_{t, t+h} \setminus W_{0,t}\right|, \tau_{\widetilde D(t)} > t \right] \le \sup_{y \in \R^d} E^y_{\M} \left[\left|W_{0,h}\right|\right] = O(t^{-1-a}). 
\end{equation}

Let 
\[ T_{D, \widetilde D(t)} := \inf\left\{s > \tau_{\widetilde D(t)} : X_s \in D \right\}. \] 
Then it holds that  
\[ E^0_{\M} \left[ \left|W_{t, t+h} \setminus W_{0,t}\right|, \ \tau_{\widetilde D(t)} \le t \right] \]
\[= E^0_{\M} \left[ \left|W_{t, t+h} \setminus W_{0,t}\right|, \ \tau_{\widetilde D(t)} \le t < \tau_{\widetilde D(t)} + T_{D, \widetilde D(t)} \right] \]
\begin{equation}\label{step-2} 
+ E^0_{\M} \left[ \left|W_{t, t+h} \setminus W_{0,t}\right|, \ \tau_{\widetilde D(t)} + T_{D, \widetilde D(t)} \le t \right]. 
\end{equation}

By the Markov property and \eqref{M-return},   
\[ E^0_{\M} \left[\left|W_{t, t+h} \setminus W_{0,t}\right|, \ \tau_{\widetilde D(t)} + T_{D, \widetilde D(t)} \le t \right] \] 
\[ \le \sup_{y \in \R^d} E^y_{\M} \left[\left|W_{0,h}\right|\right] P^0_M \left(\tau_{\widetilde D(t)} + T_{D, \widetilde D(t)} \le t \right) \]
\begin{equation}\label{step-3} 
 \le \sup_{y \in \R^d} E^y_{\M} \left[\left|W_{0,h}\right|\right] \sup_{y \in \overline{\partial} \widetilde D(t)} P_{\M}^{y} (T_{D} \le t) = O(t^{-1-a}). 
\end{equation}

By the strong Markov property, 
we have that 
\[ E^0_{\M} \left[ \left|W_{t, t+h} \setminus W_{0,t}\right|, \ \tau_{\widetilde D(t)} \le t < \tau_{\widetilde D(t)} + T_{D, \widetilde D(t)} \right] \] 
\[ = E^0_{\M} \left[ \left|W_{t, t+h} \setminus W_{\tau_{\widetilde D(t)}, t}\right|, \ \tau_{\widetilde D(t)} \le t < \tau_{\widetilde D(t)} + T_{D, \widetilde D(t)} \right] \]
\[ = E^0_{\M} \left[ F_{\M} \left(X_{\tau_{\widetilde D(t)}}, t - \tau_{\widetilde D(t)}\right), \ \tau_{\widetilde D(t)} \le t\right]. \]

By Definition \ref{bddm-def}, 
\begin{equation}\label{step-4}  
E^0_{\M} \left[ F_{\M} \left(X_{\tau_{\widetilde D(t)}}, t - \tau_{\widetilde D(t)}\right), \ \tau_{\widetilde D(t)} \le t\right] = E^0_{\M} \left[ F_{\textup{BM}} \left(X_{\tau_{\widetilde D(t)}}, t - \tau_{\widetilde D(t)}\right), \ \tau_{\widetilde D(t)} \le t\right]. 
\end{equation}

By using the Markov property, the translation invariance of the law of the Brownian motion, $h < 1$, and \eqref{return-from-remote}, 
\[ \sup_{s \in (0,t], y \in \R^d \setminus \widetilde D(t)}  \left| F_{\textup{BM}}(y,s) - F_{\textup{BM}}(s)\right| 
= \sup_{s \in (0,t], y \in \R^d \setminus \widetilde D(t)} E_{\textup{BM}}^y \left[ \left| W_{s, s+h} \setminus W_{0,s} \right|, \ T_D \le s \right]\]
\[ \le \sup_{s \in (0,t], y \in \R^d \setminus \widetilde D(t)} E_{\textup{BM}}^y \left[ \left| W_{s, s+h} \right|, \ T_D \le s \right]\]
\[ \le E_{\textup{BM}}^0 \left[ \left| W_{0,1} \right| \right] \sup_{y \in \R^d \setminus \widetilde D(t)} P^y \left( T_D \le t \right) = O(t^{-1-a}).  \]
Therefore,
\[ E^0_{\M} \left[  \left| F_{\textup{BM}} \left(X_{\tau_{\widetilde D(t)}}, t - \tau_{\widetilde D(t)}\right) - F_{\textup{BM}} \left(t - \tau_{\widetilde D(t)}\right) \right|, \ \tau_{\widetilde D(t)} \le t\right] = O(t^{-1-a}). \]

By this, \eqref{step-1}, \eqref{step-2}, \eqref{step-3} and \eqref{step-4}, 
\begin{equation}\label{step-5}
\left| E^0_{\M} \left[\left| W_{t,t+h} \setminus W_{0,t} \right| \right] - E^0_{\M} \left[ F_{\textup{BM}}\left(t-\tau_{\widetilde D(t)}\right), \ \tau_{\widetilde D(t)} < t \right] \right| = O(t^{-1-a}). 
\end{equation}

By \cite{Sp64}, $F_{\textup{BM}}(s)$ is non-negative and non-increasing with respect to $s$. 
In particular $$\sup_{s \ge 0} F_{\textup{BM}}(s) \le F_{\textup{BM}}(0) \le E^0_{\M}[|W_{0,h}|]< +\infty.$$ 
Using this and \eqref{M-exit},  
\begin{align}\label{final-upper} 
F_{\textup{BM}}(t) - O(t^{-1-a}) &\le E^0_{\M} \left[ F_{\textup{BM}}(t), \ \tau_{\widetilde D(t)} \le \frac{t}{2} \right] \notag \\
&\le E^0_{\M} \left[ F_{\textup{BM}}(t), \ \tau_{\widetilde D(t)} < t\right] \notag \\
&\le E^0_{\M} \left[ F_{\textup{BM}}\left(t-\tau_{\widetilde D(t)}\right), \ \tau_{\widetilde D(t)} < t\right]   \notag\\
&\le E^0_{\M} \left[ F_{\textup{BM}}\left(t-\tau_{\widetilde D(t)}\right), \ \tau_{\widetilde D(t)} \le \frac{t}{2} \right] + E^0_{\M} \left[ F_{\textup{BM}}\left(t-\tau_{\widetilde D(t)}\right), \ \frac{t}{2} < \tau_{\widetilde D(t)} < t\right]   \notag\\
&\le E^0_{\M} \left[F_{\textup{BM}}(t/2), \tau_{\widetilde D(t)} < t\right] + O(t^{-1-a}) \notag\\ 
&\le F_{\textup{BM}}(t/2) + O(t^{-1-a}).  
\end{align} 

By \eqref{step-5} and \eqref{final-upper}, 
there exists a large positive constant $C$ such that for every $n \ge 1$, 
\begin{equation}\label{final-upper-2}
 F_{\textup{BM}}(nh) - C(nh)^{-1-a} \le E^0_{\M} \left[\left| W_{nh, (n+1)h} \setminus W_{0,nh} \right| \right] \le F_{\textup{BM}}(nh/2) + C(nh)^{-1-a}
\end{equation}

By \cite{Sp64} and the assumption that $d \ge 6$, 
it holds that 
\begin{equation}\label{spitzer}  
\left| \sum_{n = 0}^{\infty} h \ca\left(\overline{B}(0,\epsilon)\right) - F_{\textup{BM}}(nh) \right| < +\infty.  
\end{equation}
This implies that 
\[ \lim_{t \to \infty} F_{\textup{BM}}(t) = h \ca\left(\overline{B}(0,\epsilon)\right). \]
In particular $F_{\textup{BM}}(t) \ge h \ca\left(\overline{B}(0,\epsilon)\right)$ for every $t > 0$. 
By this and \eqref{spitzer}, we have that 
\begin{equation*}\label{final-upper-3} 
\int_{0}^{\infty} F_{\textup{BM}}(t/2) - h\ca\left(\overline{B}(0, \epsilon)\right) \ dt \le \int_{0}^{\infty} F_{\textup{BM}}(t/2)  - h\ca\left(\overline{B}(0, \epsilon)\right) dt < +\infty. 
\end{equation*}

Now \eqref{sum-conv} follows from this, \eqref{step-5}, and \eqref{final-upper-2}.  
\end{proof}

By \eqref{sum-conv} and \eqref{spitzer}, 
\begin{equation}\label{pre-final} 
\lim_{n \to \infty} E_{\M}^0 \left[ \left|W_{0,nh}\right| \right] - nh \ca\left(\overline{B}(0, \epsilon)\right)\textup{ exists and is finite.} 
\end{equation}

Since $|W_{0,t}|$ is non-decreasing with respect to $t$, 
we have that for every $n \ge 0$ and $t \in [nh, (n+1)h]$, 
\[ \left| \left( E_{\M}^0 \left[ \left|W_{0,t}\right| \right] - t \ca\left(\overline{B}(0, \epsilon)\right) \right) - \left( E_{\M}^0 \left[ \left|W_{0,nh}\right| \right] - nh \ca\left(\overline{B}(0, \epsilon)\right) \right) \right| \]
\[ \le E^0_{\M} \left[\left| W_{nh, (n+1)h}  \setminus W_{0, nh} \right| \right] + h \ca (\overline{B}(0, \epsilon)). \]

By this and \eqref{pre-final}, we have that 
\[ \limsup_{t \to \infty} \left\{E_{\M}^0 \left[ \left|W_{0,t}\right| \right] - t \ca\left(\overline{B}(0, \epsilon)\right)\right\} - \liminf_{t \to \infty} \left\{E_{\M}^0 \left[ \left|W_{0,t}\right| \right] - t\ca\left(\overline{B}(0, \epsilon)\right) \right\} \]
\[ \le h \ca\left(\overline{B}(0, \epsilon)\right) + \limsup_{n \to \infty} E^0_{\M} \left[ \left|W_{nh, (n+1)h} \setminus W_{0, nh} \right| \right]. \] 

Therefore it suffices to show that 
\[ \lim_{h \to 0+} \limsup_{n \to \infty} E^0_{\M} \left[ \left| W_{nh, (n+1)h} \setminus W_{0, nh}\right| \right] = 0. \] 

In order to show this, it suffices to show that 
\begin{equation}\label{final} 
\limsup_{n \to \infty} E^0_{\M} \left[ \left|W_{nh, (n+1)h} \setminus W_{0, nh} \right| \right] \le E^0_{\textup{BM}} \left[ \left|W_{0, h} \setminus W_{0,0} \right| \right]. 
\end{equation}  
Here we remark that $W_{0,0} = B(0, \epsilon)$.  

Let $\delta > 0$. 
Then, there exists $R(\delta, h) > 0$ such that 
\begin{equation}\label{final-R} 
P^0_{\textup{BM}}\left(\tau_{B(0, R(\delta, h))} \le h\right) \le \delta. 
\end{equation}

By FHK$(2)$, if $n$ is sufficiently large, then, 
\[ P^0_{\M} \left(X_{nh} \in B(0, 2R(\delta, h) + R_0) \right) \le \delta. \]
By this and the Markov property, 
\begin{equation}\label{final-1}  
E^0_{\M} \left[ \left| W_{nh, (n+1)h}\right|, \  X_{nh} \in B(0, 2R(\delta, h) + R_0) \right] \le \delta \sup_{y \in \mathbb{R}^d} E^y_{\M} \left[ \left| W_{0, h}\right| \right]. 
\end{equation}

Let \[ \tau_n := \inf\{s > nh : X_s \notin B(X_{nh}, R(\delta, h))\}.\] 
Then, 
\[ \left| W_{nh, (n+1)h} \setminus W_{0, nh} \right| \le \left| W_{nh,  \tau_n \wedge (n+1)h} \setminus W_{0, nh} \right| + \left|W_{\tau_n \wedge (n+1)h, (n+1)h}  \setminus W_{0, \tau_n \wedge (n+1)h} \right|\]
\[ \le \left| W_{nh,  \tau_n \wedge (n+1)h} \setminus W_{nh, nh} \right| + \left| W_{\tau_n \wedge (n+1)h, (n+1)h} \setminus W_{\tau_n \wedge (n+1)h, \tau_n \wedge (n+1)h} \right|. \]

By the Markov property, the definition of $R_0$ and Definition \ref{bddm-def}, 
we have that 
\[ E^0_{\M} \left[ \left| W_{nh,  \tau_n \wedge (n+1)h} \setminus W_{nh, nh} \right|, \ X_{nh} \notin B(0, 2R(\delta, h) + R_0) \right]  \]
\[ = E^0_{\M} \left[ E^{X_{nh}}_{\M} \left[  \left| W_{0,  \tau \wedge h} \setminus W_{0, 0} \right|  \right], \ X_{nh} \notin B(0, 2R(\delta, h) + R_0) \right] \]
\[ = E^0_{\M} \left[ E^{0}_{\textup{BM}} \left[  \left| W_{0,  \tau \wedge h} \setminus W_{0, 0} \right|  \right], \ X_{nh} \notin B(0, 2R(\delta, h) + R_0) \right] \]
\begin{equation}\label{final-2}  
\le E^{0}_{\textup{BM}} \left[  \left| W_{0, h} \setminus W_{0, 0} \right|  \right], 
\end{equation}
where in the first equality we let \[ \tau := \inf\{s > 0 : X_s \notin B(X_{0}, R(\delta, h))\}. \] 

By the strong Markov property, 
\[ E^0_{\M} \left[ \left| W_{\tau_n \wedge (n+1)h, (n+1)h} \setminus W_{\tau_n \wedge (n+1)h, \tau_n \wedge (n+1)h} \right|, \ X_{nh} \notin B(0, 2R(\delta, h) + R_0) \right]  \]
\[ \le E^0_{\M} \left[ \left| W_{\tau_n, (n+1)h} \right|, \ \tau_n < (n+1)h, \ X_{nh} \notin B(0, 2R(\delta, h) + R_0) \right]\]
\[ =  E^0_{\M} \left[  E^{X_{nh}}_{\M} \left[ \left| W_{\tau, h} \right|, \ \tau < h\right], \ X_{nh} \notin B(0, 2R(\delta, h) + R_0) \right]\]
\[ \le  E^0_{\M} \left[  E^{X_{nh}}_{\M} \left[  E^{X_{nh + \tau}}_{\M} \left[\left| W_{0, h} \right|\right], \ \tau < h\right], \ X_{nh} \notin B(0, 2R(\delta, h) + R_0) \right]\]
\[ \le \sup_{y \in \mathbb{R}^d} E^y_{\M} \left[ \left| W_{0, h}\right| \right] E^0_{\M} \left[  P^{X_{nh}}_{\M} \left(  \tau < h\right), \ X_{nh} \notin B(0, 2R(\delta, h) + R_0) \right]. \]

If $X_{nh} \notin B(0, 2R(\delta, h) + R_0)$, then by the definition of $R_0$, Definition \ref{bddm-def}, and \eqref{final-R}, 
\[ P^{X_{nh}}_{\M} \left(  \tau < h\right) = P^{0}_{\textup{BM}} \left(  \tau < h\right) \le \delta. \]

Hence, 
\[ E^0_{\M} \left[ \left| W_{\tau_n \wedge (n+1)h, (n+1)h} \setminus W_{\tau_n \wedge (n+1)h, \tau_n \wedge (n+1)h} \right|, \ X_{nh} \notin B(0, 2R(\delta, h) + R_0) \right] \]
\begin{equation}\label{final-3}   
\le\delta \sup_{y \in \mathbb{R}^d} E^y_{\M} \left[ \left| W_{0, h}\right| \right]. 
\end{equation}

By \eqref{final-1}, \eqref{final-2} and \eqref{final-3}, we have that 
\[ E^0_{\M} \left[ \left| W_{nh, (n+1)h} \setminus W_{0, nh} \right| \right] \le 2\delta \sup_{y \in \mathbb{R}^d} E^y_{\M} \left[ \left| W_{0, h}\right| \right] +  E^0_{\textup{BM}} \left[ \left|W_{0, h} \setminus W_{0,0} \right| \right]. \]
Since $\delta > 0$ is taken arbitrarily, 
we have \eqref{final}. 
Thus we have the assertion. 
\end{proof}


\appendix

\section{Several claims used in the proof of Theorem \ref{pre-SLLN}}

In this section we state several claims used in the proof of Theorem \ref{pre-SLLN}. 
Most proofs depend on \cite{KKW17}. 
We remark that \cite{KKW17} deals with jump processes, however it holds  also for diffusions satisfying $\textup{Vol}(V; \alpha_1, \alpha_2)$ and $\textup{HK}(\phi; \beta_1, \beta_2)$.    

\subsection{0-1 law}

Throughout this subsection, we assume that $\textup{Vol}(V; \alpha_1, \alpha_2)$ and HK$(\phi; \beta_1, \beta_2)$ hold for certain $V$ and $\phi$ satisfying \eqref{V-alpha} and \eqref{phi-beta} respectively. 
 
\begin{Thm}[The Barlow-Bass zero-one law {\cite[Theorem 8.4]{BB99}}]\label{tail-01} 
Let $A$ be a tail event, that is, $A \in \cap_{t > 0} \sigma(\{X_u, u \ge t\})$. 
Then, 
either $P^x (A) = 0$ holds for every $x \in M$, or, $P^x (A) = 1$ holds for every $x \in M$.  
\end{Thm}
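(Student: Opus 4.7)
The approach is a standard two-step Liouville-plus-martingale argument. Set $h(x) := P^x(A) \in [0,1]$ for the tail event $A$. The first step is to establish the invariance $h = P_t h$ for every $t > 0$. Using the Markov property at time $t$ one has
\[
 h(x) = E^x\!\left[P^{X_t}(\theta_t^{-1} A)\right],
\]
and since $A$ lies in $\sigma(X_s : s \ge t)$ for \emph{every} $t > 0$, the tail property lets one identify $\theta_t^{-1}A$ with $A$ up to $P^y$-null sets for every $y$; hence $h(x) = E^x[h(X_t)] = \int_M p(t,x,y) h(y)\, d\mu(y)$. Joint continuity of $p$ together with dominated convergence then shows $h \in C(M)$.

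The second step is to show $h$ is constant using the heat kernel bounds HK$(\phi;\beta_1,\beta_2)$. For $x, y \in M$ and $t > 0$,
\[
 |h(x) - h(y)| \le \int_M |p(t,x,z) - p(t,y,z)|\, d\mu(z),
\]
which I would split over $B(x, K \phi^{-1}(t))$ and its complement for a large constant $K$. The sub-Gaussian upper estimate \eqref{HK-upper} controls the contribution from the complement by an expression that vanishes as $t \to \infty$. On the ball the spatial H\"older continuity of $p$—a standard consequence of a parabolic Harnack inequality, itself available under HK$(\phi;\beta_1,\beta_2)$ together with the volume doubling coming from $\textup{Vol}(V;\alpha_1,\alpha_2)$—yields an estimate of the form $|p(t,x,z) - p(t,y,z)| \le C (d(x,y)/\phi^{-1}(t))^\theta \, p(t,x,z)$ uniformly over such $z$, for some $\theta > 0$. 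Letting $t \to \infty$ forces $h(x) = h(y)$, so $h \equiv c$ for a constant $c \in [0,1]$.

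Finally, the bounded martingale $M_t := P^x(A \mid \mathcal{F}_t)$ converges $P^x$-a.s.\ and in $L^1$ to $1_A$ by L\'evy's upward theorem. The identification of the first step gives $M_t = h(X_t) \equiv c$, so $c \in \{0,1\}$ and the dichotomy is proved; independence of $x$ is automatic because $c$ does not depend on the starting point. The main obstacle is the Liouville step: one must extract enough spatial regularity of $p$ from HK$(\phi;\beta_1,\beta_2)$ to close the estimate $h(x) = h(y)$. This regularity is not formal and in general rests either on an auxiliary parabolic Harnack inequality or on the direct coupling-and-scaling arguments developed in \cite[\S 8]{BB99}, which is essentially why the result is cited from there rather than reproved here.
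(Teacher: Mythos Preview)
Your approach is essentially that of the paper, which does not spell out a proof but states that the Barlow--Bass argument goes through once one has the H\"older estimate of Proposition~\ref{Holder} (derived there from \cite{GT12}); your martingale-plus-Liouville scheme with the H\"older regularity as the Liouville input is exactly this. One correction to your sketch: the contribution of $M \setminus B(x, K\phi^{-1}(t))$ to $\int_M |p(t,x,z) - p(t,y,z)|\, d\mu(z)$ does \emph{not} vanish as $t \to \infty$ for fixed $K$---it is a scale-invariant tail probability bounded by something of order $CK^{-\beta_1}$ uniformly in $t$---so the correct order of quantifiers is to choose $K$ large first (making the complement contribution $<\epsilon/2$ uniformly in $t$ via the sub-Gaussian bound and $\textup{Vol}(V;\alpha_1,\alpha_2)$), and then send $t \to \infty$ to kill the ball contribution using Proposition~\ref{Holder} together with volume doubling.
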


By \cite[Corollary 5.12 and Theorem 2.12]{GT12},  we have that  
\begin{Prop}[H\"older continuity for the heat kernel]\label{Holder}
There exist two positive constants $C$ and $\theta$ such that 
\[ \left|p(t,x,x) - p(t,x,y)\right| \le \frac{C}{V(\phi^{-1}(t))} \left(\frac{d(x,y)}{\phi^{-1}(t)}\right)^{\theta}, \ \ d(x,y) \le \phi^{-1}(t). \]
\end{Prop}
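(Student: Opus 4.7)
The plan is to follow the Grigor'yan--Telcs program and obtain the H\"older bound from two ingredients: a parabolic oscillation estimate for caloric functions, and the on-diagonal upper bound extracted from HK$(\phi;\beta_1,\beta_2)$. First I will note that $\textup{Vol}(V;\alpha_1,\alpha_2)$ implies volume doubling (with constants controlled by $\alpha_1$ and $\alpha_2$), and that volume doubling together with HK$(\phi;\beta_1,\beta_2)$ yields the parabolic Harnack inequality PHI$(\phi)$ on our metric measure Dirichlet space; this is the content of Theorem~2.12 of \cite{GT12}.

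Next I will apply PHI$(\phi)$ to the non-negative caloric function $u(s,z) := p(t-s, x, z)$ on a parabolic cylinder of spatial radius $r$ and time length $\phi(r)$ with $r \le c\phi^{-1}(t)$. The standard iteration, adapted to the sub-Gaussian space-time scaling governed by $\phi$, upgrades PHI to an oscillation estimate
\[ \textup{osc}_{Q(r/2)} u \le \eta \cdot \textup{osc}_{Q(r)} u, \qquad \eta \in (0,1), \]
and iterating this over dyadic scales produces H\"older continuity with exponent $\theta := \log(1/\eta)/\log 2$ (modulated by the doubling exponent of $\phi$). This is exactly the content of the argument underlying Corollary~5.12 of \cite{GT12}.

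Finally I will control the oscillation at the top scale by the on-diagonal heat kernel bound
\[ \sup_{z \in M} p(t,z,z) \le \frac{C}{V(\phi^{-1}(t))}, \]
which follows immediately from the upper estimate in HK$(\phi;\beta_1,\beta_2)$ together with $\Psi(0,t) = 0$ and volume doubling. Combining the oscillation iteration with this sup bound gives the claimed estimate
\[ |p(t,x,x) - p(t,x,y)| \le \frac{C}{V(\phi^{-1}(t))}\left(\frac{d(x,y)}{\phi^{-1}(t)}\right)^{\theta}, \qquad d(x,y) \le \phi^{-1}(t). \]

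The main obstacle is the first step: verifying that PHI$(\phi)$ really does follow from HK$(\phi;\beta_1,\beta_2)$ and volume doubling under the formulation used here. In the Grigor'yan--Telcs framework this requires extracting capacity and mean exit time estimates from the two-sided heat kernel bounds and then running a Moser-type argument, all of which is encapsulated in \cite[Theorem~2.12]{GT12}. Once PHI is available, the oscillation iteration giving H\"older regularity is routine, and the final rescaling by the on-diagonal bound is immediate.
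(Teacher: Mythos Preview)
Your approach is essentially identical to the paper's: the paper establishes this proposition by a direct appeal to \cite[Corollary 5.12 and Theorem 2.12]{GT12}, and your proposal simply unpacks exactly that citation---Theorem~2.12 gives PHI$(\phi)$ from volume doubling and HK$(\phi;\beta_1,\beta_2)$, and Corollary~5.12 turns PHI into the H\"older estimate via the oscillation iteration. Your write-up is a faithful expansion of the paper's one-line proof.
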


Since we assume HK$(\phi; \beta_1, \beta_2)$, our framework is a little more general than \cite{BB99}. 
However, we can show Theorem \ref{tail-01} as in \cite[Theorem 8.4]{BB99} once we have Proposition \ref{Holder} above, 
so we do not give the proof. 
We remark that we need the uniform volume growth condition \eqref{UVG} for the proof, and hence Theorem \ref{tail-01} is not applicable to the framework of \cite[Subsection 4.3]{GSC05}.  
Informally this occurs due to the lack of homogeneity of the framework of \cite[Subsection 4.3]{GSC05}. 
See also Subsection 2.1.

\subsection{Chung-type liminf laws of the iterated logarithm}
 
Throughout this subsection,  we assume that $\textup{Vol}(V; \alpha_1, \alpha_2)$ and HK$(\phi; \beta_1, \beta_2)$ hold for certain $V$ and $\phi$ satisfying \eqref{V-alpha} and \eqref{phi-beta} respectively. 

\begin{Thm}[cf. {\cite[Theorem 3.18]{KKW17}}]\label{318-KKW} 
There exists a constant $C$ such that for every $x \in M$,  
\[ \liminf_{t \to \infty} \frac{\sup_{s \le t} d(X_0, X_s)}{\phi^{-1}(t /\log\log t)} \le C,  \textup{ $P^x$-a.s.}   \]
\end{Thm}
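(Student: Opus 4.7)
The plan is to adapt the Chung-type argument of \cite[Theorem 3.18]{KKW17} to the diffusion setting. Write $g(t):=\phi^{-1}(t/\log\log t)$ for the target scale. The key analytic input, derivable from HK$(\phi;\beta_1,\beta_2)$ as in \cite{GT12} by combining the near-diagonal heat kernel lower bound with a spectral/eigenvalue argument for the Dirichlet heat kernel on $B(x,r)$, is the confinement estimate
\[
P^x(\tau_{B(x,r)}>t)\ge c_1\exp(-c_2\,t/\phi(r)),\qquad x\in M,\ r,t>0.
\]
Indeed, integrating the Dirichlet heat kernel lower bound $p^{B(x,r)}(t,x,y)\ge cV(r)^{-1}\exp(-c't/\phi(r))$ (valid on $B(x,r/2)$) against $\mu$ on $B(x,r/2)$ and using \eqref{UVG} gives the displayed bound.

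First, I would observe that $E_C:=\{\liminf_{t\to\infty}\sup_{s\le t}d(X_0,X_s)/g(t)\le C\}$ is a tail event, since modifying the trajectory on any initial segment $[0,T]$ changes the numerator by a bounded random quantity while $g(t)\to\infty$. By Theorem \ref{tail-01}, $P^x(E_C)\in\{0,1\}$ uniformly in $x$, so it suffices to produce some $x$ and some $C$ with $P^x(E_C)>0$. Next, I would fix a large $\theta>1$, set $t_k:=\theta^k$ and $r_k:=Cg(t_k)$, and apply the confinement estimate together with the Markov property at $t_{k-1}$. The doubling property \eqref{phi-beta} yields $\phi(r_k)\ge c\,C^{\beta_1}\,t_k/\log\log t_k$, whence
\[
P^x\bigl(\sup_{s\in[t_{k-1},t_k]}d(X_{t_{k-1}},X_s)\le r_k\,\bigm|\,\sigma(X_u:u\le t_{k-1})\bigr)\ge c_1\exp\bigl(-c_3\log\log t_k/C^{\beta_1}\bigr)\ge c'\,k^{-c_4/C^{\beta_1}}.
\]
Choosing $C$ large so that $c_4/C^{\beta_1}<1$ makes these conditional probabilities non-summable, and L\'evy's conditional Borel-Cantelli lemma produces infinitely many $k$ for which the displacement over $[t_{k-1},t_k]$ is at most $r_k$, $P^x$-almost surely. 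The triangle inequality then gives
\[
\sup_{s\le t_k}d(X_0,X_s)\le \sup_{s\le t_{k-1}}d(X_0,X_s)+r_k,
\]
and the first term is controlled, $P^x$-a.s., by a Khinchin-type upper LIL on the scale $\phi^{-1}(t\log\log t)$, itself a consequence of HK$(\phi;\beta_1,\beta_2)$ via the dual exit-time upper bound $P^x(\tau_{B(x,r)}\le t)\le c\exp(-c'(\phi(r)/t)^{1/(\beta_2-1)})$.

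The main obstacle is closing the gap in this last step: the Khinchin scale $\phi^{-1}(t\log\log t)$ exceeds the Chung scale $g(t)=\phi^{-1}(t/\log\log t)$ by a polylogarithmic factor, so the naive triangle-inequality bound does not immediately match the denominator in the statement. The resolution, as in \cite{KKW17}, is to pass to a sparser subsequence (for instance $t_k=\exp(k\log k)$) so that the telescoping contribution of the displacement prior to $t_{k-1}$ is dominated by $r_k$; alternatively one may replace the incremental events by $\{\tau_{B(X_0,r_k)}>t_k\}$ and estimate their pairwise correlations via the Markov property, which still delivers non-summability of first moments with controlled second moments and hence $P^x(E_C \text{ occurs}) > 0$ via Paley-Zygmund. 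Either route, combined with the $0$-$1$ law of Step~1, yields $P^x(E_{C''})=1$ for some constant $C''=O(C)$, which is the asserted bound.
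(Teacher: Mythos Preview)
Your proposal is correct and follows essentially the same route as the paper, which simply defers to \cite[Theorem~3.18]{KKW17}: the two key inputs there are precisely a confinement lower bound (the paper's Proposition~\ref{(2.12)-KKW}, equivalent to your $P^x(\tau_{B(x,r)}>t)\ge c_1\exp(-c_2 t/\phi(r))$) and an exit-time upper bound (Lemma~\ref{(3.4)-KKW}), and the architecture is subsequence plus Borel--Cantelli plus the zero--one law of Theorem~\ref{tail-01}. The only cosmetic differences are that the paper invokes the \emph{polynomial} exit bound $P^x(\sup_{s\le t}d(x,X_s)\ge r)\le ct/\phi(r)$ rather than your exponential one (both follow from HK$(\phi;\beta_1,\beta_2)$ and either suffices once the subsequence is sparse enough, e.g.\ $t_k=\exp(k^2)$ already works with the polynomial bound), and that the paper packages the confinement estimate via the iteration of \cite[Proposition~2.11]{KKW17} rather than via a Dirichlet heat-kernel lower bound; your handling of the Khinchin--Chung scale mismatch via a sparser subsequence is exactly the standard resolution.
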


We remark that we do not need to assume FHK$(\phi; \beta_1, \beta_2)$ above. 
The proof  of Theorem \ref{318-KKW}  goes in the same manner as in the proof of \cite[Theorem 3.18]{KKW17} if we have the following two assertions.

\begin{Lem}[Estimate for exit time; {cf. \cite[(3.4)]{KKW17}}] \label{(3.4)-KKW}
There exists a positive constant  $c$ such that for any $x \in M$ and any $t, r > 0$, 
\[ P^x \left(\sup_{s \le t} d(x, X_s) \ge r\right) \le c\frac{t}{\phi(r)}. \]
\end{Lem}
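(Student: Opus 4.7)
The plan is to reduce to a first-exit-time estimate, derive a uniform survival estimate from the two-sided heat kernel bounds, and then dominate $P^x(\tau \le t)$ by the Gaussian-type tail of the transition density via the strong Markov property. By continuity of paths, the event $\{\sup_{s\le t} d(x, X_s) \ge r\}$ coincides with $\{\tau \le t\}$ where $\tau := \tau_{B(x,r)}$, and $d(x, X_\tau) = r$ on $\{\tau < \infty\}$. For $t \ge c_1 \phi(r)$ with a fixed small constant $c_1 > 0$ to be chosen below, the asserted bound is trivial from $P^x(\tau \le t) \le 1$, so I may assume $t < c_1 \phi(r)$.

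The first key ingredient is the \emph{uniform survival estimate}: there exist $c_0, c_* > 0$, independent of $x$ and $r$, such that $P^x\bigl(\tau_{B(x,r)} > c_0 \phi(r)\bigr) \ge c_*$. This is the standard consequence of HK$(\phi;\beta_1,\beta_2)$, proved by writing the Dirichlet heat kernel on $B(x,r)$ via the strong Markov decomposition $p_{B(x,r)}(s,x,y) = p(s,x,y) - E^x\bigl[p(s - \tau, X_\tau, y);\ \tau \le s\bigr]$, bounding the subtracted term using the Gaussian-type decay in \eqref{HK-upper} (specifically, the elementary lower bound $\Psi(r/2, s) \ge 1/2$ obtained by plugging $s' = r/2$ into the supremum defining $\Psi$, valid once $s \le \phi(r/2)/2$), and integrating the resulting near-diagonal lower bound over $B(x, r/2)$ using the volume regularity in \eqref{V-alpha}--\eqref{UVG}.

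I then choose $c_1$ so small that $2 c_1 \phi(r) \le c_0 \phi(r/4)$ for every $r > 0$, which is possible by the doubling inherent in \eqref{phi-beta}. Applying the strong Markov property at $\tau$ together with the survival estimate for the ball $B(X_\tau, r/4)$: on $\{\tau \le t\}$, with conditional probability at least $c_*$ the process started afresh at $X_\tau$ does not leave $B(X_\tau, r/4)$ during the remaining time of length $2t - \tau \in [t, 2t] \subset [0, c_0 \phi(r/4)]$; on that event $X_{2t} \in B(X_\tau, r/4)$, and the triangle inequality together with $d(x, X_\tau) = r > r/4 + r/2$ forces $d(x, X_{2t}) \ge r/2$. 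Hence $c_* P^x(\tau \le t) \le P^x\bigl(d(x, X_{2t}) \ge r/2\bigr)$. Integrating \eqref{HK-upper} over the dyadic annuli $\{2^{k-1} r \le d(x, \cdot) < 2^k r\}$ with the help of \eqref{V-alpha} and the standard lower estimate $\Psi(\rho, 2t) \gtrsim (\rho/\phi^{-1}(2t))^{\beta_1/(\beta_1 - 1)}$ for $\rho \ge \phi^{-1}(2t)$ now yields
\[ P^x\bigl(d(x, X_{2t}) \ge r/2\bigr) \le C\exp\!\bigl(-c(r/\phi^{-1}(2t))^{\beta_1/(\beta_1 - 1)}\bigr), \]
and since $r/\phi^{-1}(2t)$ is bounded below by a positive power of $\phi(r)/t$ under the doubling \eqref{phi-beta}, this super-polynomial decay easily dominates the desired linear bound $C t/\phi(r)$ on the range $t \le c_1 \phi(r)$.

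The principal technical obstacle is the uniform survival estimate, which requires both sides of HK$(\phi;\beta_1,\beta_2)$ to be used in tandem; the rest of the argument is a clean combination of the strong Markov property and standard heat-kernel tail estimates.
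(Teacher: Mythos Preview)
Your argument is correct and follows the same overall strategy as the paper's (strong Markov at the first exit time, then a heat-kernel tail bound on $P^x(d(x,X_{2t})\ge r/2)$), but you invoke a needlessly strong intermediate step and thereby overstate the hypotheses. The paper explicitly records that only the \emph{upper} heat kernel bound \eqref{HK-upper} is required, whereas you assert that ``both sides of HK$(\phi;\beta_1,\beta_2)$'' are needed for the uniform survival estimate. In fact, at the point where you use survival in $B(X_\tau,r/4)$ for the whole time interval $[0,2t-\tau]$, you only use the conclusion $X_{2t}\in B(X_\tau,r/4)$, i.e.\ a single-time statement. Replacing the survival estimate by the fixed-time bound
\[
\inf_{z\in M}\ \inf_{0<u\le 2t} P^z\bigl(X_u\in B(z,r/4)\bigr)\ \ge\ \tfrac12,
\]
which follows from conservativeness, \eqref{HK-upper}, and the volume regularity \eqref{V-alpha}--\eqref{UVG} (integrate the Gaussian-type tail over dyadic shells, exactly as you already do at the end), gives the same inequality $P^x(\tau\le t)\le 2\,P^x\bigl(d(x,X_{2t})\ge r/2\bigr)$ without ever touching the lower bound \eqref{HK-lower}. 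This is the version behind \cite[(3.4)]{KKW17} that the paper cites. So your proof works, but the ``principal technical obstacle'' you identify is not actually present, and the dependence on the lower heat-kernel bound can be dropped.
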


The assumption for the upper bound of the heat kernel \eqref{HK-upper} is sufficient to show this. 
The proof goes in the same manner as in the proof of \cite[(3.4)]{KKW17}. 

\begin{Prop}[cf. {\cite[Proposition 2.12]{KKW17}}]\label{(2.12)-KKW}
Assume that $\textup{Vol}(V; \alpha_1, \alpha_2)$ and HK$(\phi; \beta_1, \beta_2)$. 
Then, there exist a positive constant $c$ and $a_1^*, a_2^* \in (0,1)$ such that 
\[ {(a_1^*)}^n \le P^x \left(\sup_{s \in [0, c \phi(r) n]} d(x, X_s) \le r \right) \le (a_2^*)^n. \]
holds for every $n \ge 1$, $r > 0$ and $x \in M$. 
\end{Prop}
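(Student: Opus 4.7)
The plan is to prove both inequalities by establishing a suitable \emph{one-step} estimate and then iterating via the strong Markov property.

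For the \textbf{upper bound}, I would first show that there is a constant $c_u>0$ such that
$$\sup_{y\in M}\, P^y\!\bigl(\tau_{B(y,R)} > c_u\phi(R)\bigr) \leq \tfrac12, \qquad \forall R>0.$$
Since $\{\tau_{B(y,R)}>t\}\subset\{X_t\in B(y,R)\}$, the HK upper bound \eqref{HK-upper} together with \eqref{UVG} gives
$$P^y(X_t\in B(y,R)) \leq \mu(B(y,R))\sup_z p(t,y,z) \leq C\,\frac{V(R)}{V(\phi^{-1}(t))},$$
and then \eqref{V-alpha} and \eqref{phi-beta} force the right-hand side below $\tfrac12$ once $c_u$ is large enough. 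To iterate: on $\{\tau_{B(x,r)}>t\}$ we have $X_t\in B(x,r)\subset B(X_t,2r)$, so $\tau_{B(X_t,2r)}\circ\theta_t\geq \tau_{B(x,r)}-t$, and the strong Markov property yields
$$P^x\!\bigl(\tau_{B(x,r)}>t+s\bigr) \leq P^x\!\bigl(\tau_{B(x,r)}>t\bigr)\,\sup_{y}P^y\!\bigl(\tau_{B(y,2r)}>s\bigr).$$
Taking $s=c_u\phi(2r)\leq c_u C_4 2^{\beta_2}\phi(r)=:c\phi(r)$ by \eqref{phi-beta} and iterating, we get $P^x(\tau_{B(x,r)}>nc\phi(r))\leq 2^{-n}$, proving the upper bound with $a_2^*=\tfrac12$.

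For the \textbf{lower bound}, the key is the following one-step claim: there exist constants $c_\ell>0$, $K\geq 2$, and $p_*\in(0,1)$ such that
$$\inf_{y\in B(x,r/K)} P^y\!\bigl(X_{c_\ell\phi(r)}\in B(x,r/K),\ \tau_{B(x,r)}>c_\ell\phi(r)\bigr) \geq p_*, \qquad \forall\, x\in M,\ r>0. \qquad(*)$$
Granting $(*)$, define $E_k=\{X_{kc_\ell\phi(r)}\in B(x,r/K),\ \tau_{B(x,r)}>kc_\ell\phi(r)\}$. Since $X_0=x\in B(x,r/K)$ and $E_k\subset E_{k-1}$, applying the strong Markov property at time $(k-1)c_\ell\phi(r)$ and using $(*)$ gives $P^x(E_k)\geq p_* P^x(E_{k-1})$; induction yields $P^x(E_n)\geq p_*^n$, and $E_n\subset\{\tau_{B(x,r)}>nc_\ell\phi(r)\}$ gives the lower bound with $a_1^*=p_*$.

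To establish $(*)$, I would combine two ingredients. The HK lower bound \eqref{HK-lower} requires $d(y,z)\leq c_6\phi^{-1}(c_\ell\phi(r))$; since $y,z\in B(x,r/K)$ implies $d(y,z)\leq 2r/K$, this is ensured whenever $2r/K\leq c_6\phi^{-1}(c_\ell\phi(r))$, an admissible constraint by \eqref{phi-beta} for a suitable joint choice of $c_\ell,K$. Under this constraint, \eqref{HK-lower}, \eqref{UVG} and \eqref{V-alpha} yield
$$P^y\!\bigl(X_{c_\ell\phi(r)}\in B(x,r/K)\bigr) \geq \frac{c_5\,\mu(B(x,r/K))}{\mu(B(y,\phi^{-1}(c_\ell\phi(r))))} \geq p_1>0.$$
Simultaneously, for $y\in B(x,r/K)$ we have $d(y,\partial B(x,r))\geq r(1-1/K)$, so Lemma \ref{(3.4)-KKW} and \eqref{phi-beta} give
$$P^y\!\bigl(\tau_{B(x,r)}\leq c_\ell\phi(r)\bigr) \leq \frac{c_0\,c_\ell\,\phi(r)}{\phi(r(1-1/K))} \leq \frac{c_0\,c_\ell\,C_4^{-1}}{C_3(1-1/K)^{\beta_2}};$$
by further tuning (effectively taking $c_\ell$ at the lower end of its admissible range in terms of $K$, and $K$ correspondingly large), this can be made at most $p_1/2$, and then
$$P^y\!\bigl(X_{c_\ell\phi(r)}\in B(x,r/K),\ \tau_{B(x,r)}>c_\ell\phi(r)\bigr) \geq p_1 - p_1/2 = p_1/2 =: p_*.$$

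The \textbf{main obstacle} lies precisely in this tuning of constants for $(*)$: the HK lower bound is available only when $c_\ell\phi(r)$ is large enough that its associated scale $\phi^{-1}(c_\ell\phi(r))$ dominates the target radius $2r/K$, whereas the exit-probability bound is useful only for small $c_\ell$. The compatibility of these two requirements rests on the scaling exponents in \eqref{V-alpha} and \eqref{phi-beta}: as $K$ grows and $c_\ell$ is chosen near its minimal admissible value $\sim K^{-\beta_2}$, the exit bound decays polynomially in $K$ while $p_1$ stays bounded below by a universal constant (in typical cases where $\alpha_1=\alpha_2$ and $\beta_1=\beta_2$) or decays strictly slower, so a viable choice exists. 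Carefully book-keeping the constants in \eqref{V-alpha}, \eqref{UVG}, \eqref{HK-lower} and \eqref{phi-beta} is the delicate step.
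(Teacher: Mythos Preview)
Your upper-bound argument is correct and is the standard submultiplicativity proof; this part matches the route taken in \cite{KKW17}.

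The lower bound, however, has a genuine gap. The constraint coming from \eqref{HK-lower} forces
\[
c_\ell \;\ge\; \sup_{r>0}\frac{\phi(2r/(Kc_6))}{\phi(r)},
\]
and by \eqref{phi-beta} this supremum is of order $K^{-\beta_1}$ (not $K^{-\beta_2}$). With $c_\ell$ at this value, the time scale $\phi^{-1}(c_\ell\phi(r))$ can, for unfavourable $r$, be as large as a constant times $rK^{-\beta_1/\beta_2}$, which exceeds $r/K$ whenever $\beta_1<\beta_2$. Feeding this into \eqref{UVG} and \eqref{V-alpha} gives only
\[
p_1 \;\gtrsim\; \frac{V(r/K)}{V\bigl(rK^{-\beta_1/\beta_2}\bigr)} \;\gtrsim\; K^{-\alpha_2(1-\beta_1/\beta_2)},
\]
whereas the exit bound from Lemma~\ref{(3.4)-KKW} is $\lesssim c_\ell \sim K^{-\beta_1}$. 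Your scheme therefore needs $\beta_1>\alpha_2(1-\beta_1/\beta_2)$, i.e.\ $\beta_1>\alpha_2\beta_2/(\alpha_2+\beta_2)$, and this is \emph{not} a consequence of the hypotheses: when $\beta_1<\beta_2$ and $\alpha_2$ is large the exit probability dominates $p_1$ for every $K$, so no admissible pair $(K,c_\ell)$ exists. The assertion that $p_1$ ``decays strictly slower'' is unjustified in general; your argument is complete only in the case $\beta_1=\beta_2$.

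The paper's route, following \cite[Proposition~2.12]{KKW17}, avoids this obstruction by invoking \cite[Proposition~2.11]{KKW17}, whose proof is the chaining argument of \cite[Lemma~2.3]{BBK09}. That argument produces a lower bound on the \emph{Dirichlet} heat kernel $p_{B(x,r)}(t,y,z)$ for $y,z$ in a fixed sub-ball and $t$ up to a constant multiple of $\phi(r)$, by subdividing the time interval into many short pieces and applying \eqref{HK-lower} on each piece at its own natural spatial scale. This decouples the near-diagonal constraint from the global confinement constraint, so no single time $c_\ell\phi(r)$ has to serve both the small ball $B(x,r/K)$ and the large ball $B(x,r)$ simultaneously; the one-step estimate $(*)$ then follows with constants independent of $r$. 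As noted in Remark~\ref{211-KKW}, the only adjustment needed in the present non-geodesic setting is to replace $B(x,r/2)$ by $B(x,\delta_0 r/2)$ in the statement of \cite[Proposition~2.11]{KKW17}.
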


The proof goes in a manner similar to the proof of \cite[Proposition 2.12]{KKW17}. 

\begin{Rem}\label{211-KKW}
The proof of \cite[Proposition 2.12]{KKW17} depend on  \cite[Proposition 2.11]{KKW17}. 
The proof of \cite[Proposition 2.11]{KKW17} uses a chaining argument of \cite[Lemma 2.3]{BBK09}. 
The chaining argument is done under the assumption that the metric is {\it geodesic}. 
However, \cite[Proposition 2.12]{KKW17} does not assume that the metric is geodesic. 
This does not imply any major problems, because it is easy to show  \cite[Proposition 2.12]{KKW17} without using the {\it full} statement of \cite[Proposition 2.11]{KKW17} but using a {\it partial} version of \cite[Proposition 2.11]{KKW17}, which is obtained by replacing $B(x, r/2)$ in  \cite[Proposition 2.11]{KKW17} with $B(x, \delta_0 r/2)$ where $\delta_0$ denotes the same constant as in  \cite[Proposition 2.11]{KKW17}. 
\end{Rem} 

\subsection{Laws of iterated logarithms for local time}

Throughout this subsection, we assume that Vol$(V; \alpha_1 \alpha_2)$ and FHK$(\phi; \beta_1, \beta_2)$ hold for certain $V$ and $\phi$ satisfying \eqref{V-alpha} and \eqref{phi-beta} respectively, 
and furthermore $\alpha_2 < \beta_1$. 
Under these assumptions we have \eqref{converge-int},  and in the same manner as in the proof of  \cite[Proposition 4.3]{KKW17}, 
we can show that a version of the local time of $X$ exists, specifically, there exists a random field $\ell (t,x)(\omega)$ satisfying the following conditions: 
(1) $\ell (t,x)(\omega)$ is jointly measurable with respect to $(t,x,\omega)$, and 
\[ \int_0^t h(X_s) ds = \int_{M} h(x) \ell (t,x) \mu(dx) \]
holds for every $t > 0$ and every Borel measurable function $h$ on $M$. \\
(2) We have that for every $\lambda > 0$ and every $x, y \in M$, 
\[ E^x \left[ \int_0^{+\infty} \exp(-\lambda t) d\ell(t,y)\right] = \int_0^{+\infty} \exp(-\lambda t) p(t,x,y) dt, \] 
where for every fixed $y in M$, $d\ell(t,y)$ denotes the measure with respect to $t$.\\
These conditions correspond to \cite[property (1) and (2) of Proposition 4.3]{KKW17} respectively.   

\begin{Thm}[cf. {\cite[Upper bounds for Theorems 4.11 and 4.15]{KKW17}}]\label{411415} 
There exist two constants $c_{\sup}$ and $c_{\inf}$ such that for every $x \in M$, \\
(i)
\[ \limsup_{t \to \infty} \frac{\sup_{z \in M} \ell(t,z)}{t/V(\phi^{-1}(t/\log\log t))} \le c_{\sup}, \textup{ $P^x$-a.s.}  \]
(ii) 
\[ \liminf_{t \to \infty} \frac{\sup_{z \in M} \ell(t,z)}{(t/\log\log t)/V(\phi^{-1}(t/\log\log t))} \le c_{\inf}, \textup{ $P^x$-a.s.}  \]
\end{Thm}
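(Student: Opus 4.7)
The plan is to adapt the upper-bound halves of \cite[Theorems 4.11 and 4.15]{KKW17}, which are written for jump processes, to our diffusion setting. The two parts are of different flavors: (i) is a limsup upper bound proved by the moment method, exponential Chebyshev, and a covering argument, while (ii) is a Chung-type liminf upper bound that requires exhibiting, infinitely often along a subsequence, good events of sufficient conditional probability to invoke a second Borel--Cantelli lemma.

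For (i), the crux is the Kac-type moment estimate
\[
E^x[\ell(t,y)^n] \le n!\,\bigl(\sup_{w \in M} E^w[\ell(t,y)]\bigr)^n,
\]
which follows by iterating the strong Markov property after interpreting the $n$-th moment as an integral of the joint local-time measure over an ordered $n$-simplex. The strongly-recurrent hypothesis $\alpha_2 < \beta_1$ combined with HK$(\phi;\beta_1,\beta_2)$ yields $\sup_{w} E^w[\ell(t,y)] \le C\,t/V(\phi^{-1}(t))$, so exponential Chebyshev produces
\[
P^x\bigl(\ell(t,y) \ge \lambda\bigr) \le 2\exp\!\bigl(-c\lambda\,V(\phi^{-1}(t))/t\bigr).
\]
To pass from a single $y$ to $\sup_{z \in M}$, I would first use Lemma \ref{(3.4)-KKW} to confine the path, up to an event of summable probability, to $B(X_0,R_t)$ with $R_t := \phi^{-1}(t(\log\log t)^2)$, then cover this ball by balls of radius $\delta_t := \phi^{-1}(t)/(\log\log t)^{1/\theta}$ via $\textup{Vol}(V;\alpha_1,\alpha_2)$. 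The number of centers grows only poly-logarithmically, and the spatial H\"older continuity of $\ell(t,\cdot)$ coming from Proposition \ref{Holder} controls the oscillation between centers. A union bound with $\lambda = K\,t/V(\phi^{-1}(t/\log\log t))$ for large $K$ gives a Borel--Cantelli-summable tail along $t_n = e^n$, and interpolation to all $t$ uses monotonicity of $\ell(\cdot,z)$ and the regular variation of $V\circ\phi^{-1}$.

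For (ii), the heuristic is that when the path spreads its occupation-time mass uniformly over a ball of radius roughly $\phi^{-1}(t/\log\log t)$, the identity $\int_M \ell(t,z)\,d\mu = t$ together with $\textup{Vol}(V;\alpha_1,\alpha_2)$ forces $\sup_z \ell(t,z)$ to be at most of order $(t/\log\log t)/V(\phi^{-1}(t/\log\log t))$. To implement this, I would pick a rapidly growing subsequence $t_n$ (e.g.\ $t_n = e^{n^2}$) that ensures block quasi-independence, and define the good event $A_n$ over the increment $[t_{n-1},t_n]$ requiring both (a) a controlled spreading of the path, established via the lower bound in \textrm{FHK}$(\phi;\beta_1,\beta_2)$ and a second-moment argument applied to $z \mapsto \ell(t_n,z)$ integrated over balls within the explored region, and (b) a matching upper bound on $\ell(t_n,\cdot)$ from the upper heat kernel estimate. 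A lower bound on $P^x(A_n \mid \mathcal{F}_{t_{n-1}})$ uniform in the past is obtained by first returning the process into a moderate ball via Proposition \ref{(2.12)-KKW} and then invoking the two-sided \textrm{FHK} over $[t_{n-1},t_n]$. A conditional Borel--Cantelli lemma applied to $(\mathcal{F}_{t_n})_n$ yields $A_n$ infinitely often, which is (ii).

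The main obstacle is in (ii): designing a good event $A_n$ that controls the entire supremum $\sup_{z \in M}\ell(t_n,z)$ (not just $\ell$ at a fixed point) while still admitting a conditional lower bound on $P^x(A_n \mid \mathcal{F}_{t_{n-1}})$ that is uniform in the past configuration, so that the second Borel--Cantelli lemma actually applies. This forces simultaneous use of both sides of \textrm{FHK}$(\phi;\beta_1,\beta_2)$ and of the uniformity built into $\textup{Vol}(V;\alpha_1,\alpha_2)$, together with a careful chaining of the strong Markov property across blocks so that successive good events are quasi-independent. Part (i), in comparison, is essentially mechanical once the Kac moment estimate and the covering step are in place.
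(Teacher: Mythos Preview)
Your plan has a genuine gap, and it is the same gap in both parts: you are treating the passage from a single-point local-time bound to $\sup_{z\in M}\ell(t,z)$ as if it were handled by Proposition~\ref{Holder}. That proposition gives H\"older continuity of the \emph{heat kernel} $p(t,x,\cdot)$, not of the random field $z\mapsto \ell(t,z)$. Pathwise spatial regularity of local time is a separate (and in this setting the central) issue; it is exactly what the paper isolates as Proposition~\ref{Pp4.8-KKW}, proved via the Garsia--Rodemich--Rumsey lemma applied to exponential moments of the increments $\ell(t,x)-\ell(t,y)$ after a scaling normalization. Without such a uniform increment estimate your covering step has nothing to control the oscillation of $\ell(t,\cdot)$ inside each covering ball, and the whole supremum argument collapses.

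There is also a quantitative problem with your direct Borel--Cantelli scheme in (i) that the paper's route avoids. Your pointwise Kac bound yields $P^x(\ell(t,y)\ge K\,t/V(\phi^{-1}(t/\log\log t)))\le 2\exp\bigl(-cK\,V(\phi^{-1}(t))/V(\phi^{-1}(t/\log\log t))\bigr)$, and under $\alpha_2<\beta_1$ the ratio in the exponent is only of order $(\log\log t)^{\alpha_1/\beta_2}$ with $\alpha_1/\beta_2<1$. Along $t_n=e^n$ this gives tails $\exp(-cK(\log n)^{\alpha_1/\beta_2})$, which is \emph{not} summable, so even before the covering your Borel--Cantelli fails. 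The paper (following \cite{KKW17}) does not attempt a direct summable tail at the iterated-log scale. Instead one observes that $H_t:=\sup_z\ell(t,z)$ is subadditive in the sense of Proposition~\ref{KKW-subadditive}, so (i) reduces to the \emph{tightness} statement $\sup_{x,t}P^x(H_t\ge b\,t/V(\phi^{-1}(t)))\to 0$ as $b\to\infty$; this is precisely what Proposition~\ref{Pp4.8-KKW} supplies. For (ii) your outline is too schematic to pin down, but any realization of your ``good event'' $A_n$ that constrains the full supremum $\sup_z\ell(t_n,z)$ will again require a uniform-in-$z$ modulus of continuity for the local time, i.e.\ the same Garsia-type input. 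In short: the step you labeled ``essentially mechanical'' is where the real work sits, and Proposition~\ref{Holder} cannot substitute for Proposition~\ref{Pp4.8-KKW}.
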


The most important step of the proof  is establishing the following assertion. 

\begin{Prop}[Modified statement of {\cite[Proposition 4.8]{KKW17}}]\label{Pp4.8-KKW}
There exist two positive constants $c_1, c_2$ such that for every $A, L, u > 0$, 
\[ P^z \left(\sup_{d(x,y) \le L} \sup_{t \in [0, u]} |\ell(t,x) - \ell(t,y)| > A \right) \]
\[\le c_1 \frac{V(\max\{L, \phi^{-1}(u)\})^2}{V(L)^2} \exp\left(-c_2 A \left(\frac{V(L) V(\max\{L, \phi^{-1}(u)\})}{\phi(L) \phi(\max\{L, \phi^{-1}(u)\})}\right)^{1/2}\right). \]
\end{Prop}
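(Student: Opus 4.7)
The plan is to adapt the strategy of \cite[Proposition 4.8]{KKW17} to the diffusion setting under the standing hypothesis $\alpha_2 < \beta_1$, which combined with FHK$(\phi; \beta_1, \beta_2)$ and Proposition \ref{Holder} ensures that $\ell(t, x)$ admits a jointly continuous version in $(t, x)$ and obeys Kolmogorov-type moment bounds. The argument proceeds in three stages: moment computation, exponential-moment plus Chebyshev, and chaining together with time-slicing.

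The first stage estimates even moments of the spatial increment. Using the Markov property and the occupation-density formula, one writes
\[ E^z\bigl[(\ell(u,x) - \ell(u,y))^{2k}\bigr] = (2k)! \int_{0 \le s_1 \le \cdots \le s_{2k} \le u} \sum_{\xi \in \{x,y\}^{2k}} (-1)^{N(\xi)} \prod_{i=1}^{2k} p(s_i - s_{i-1}, \xi_{i-1}, \xi_i) \, ds, \]
where $\xi_0 := z$ and $N(\xi)$ counts the occurrences of $y$ in $\xi$. The alternating sum telescopes into products containing factors of the form $p(s_i - s_{i-1}, \xi_{i-1}, x) - p(s_i - s_{i-1}, \xi_{i-1}, y)$. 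Splitting each $s$-integration into the two regimes $s \le \phi(L)$, where Proposition \ref{Holder} yields the gain $(d(x,y)/\phi^{-1}(s))^\theta$, and $s \ge \phi(L)$, where the FHK upper bound is used directly, and invoking $\alpha_2 < \beta_1$ for convergence, I expect a bound, for $d(x, y) \le L$,
\[ E^z\bigl[(\ell(u, x) - \ell(u, y))^{2k}\bigr] \le (k!)^2 \, C^k \left(\frac{\phi(L) \, \phi(\max\{L, \phi^{-1}(u)\})}{V(L) \, V(\max\{L, \phi^{-1}(u)\})}\right)^k. \]

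Second, summing these moment bounds with factorial weights yields an exponential moment estimate, and the exponential Chebyshev inequality produces
\[ P^z\bigl(|\ell(u, x) - \ell(u, y)| > A\bigr) \le 2 \exp\!\left(-c_2 A \sqrt{\frac{V(L) \, V(\max\{L, \phi^{-1}(u)\})}{\phi(L) \, \phi(\max\{L, \phi^{-1}(u)\})}}\right) \]
for any fixed pair $x, y$ with $d(x, y) \le L$. Third, to pass to the supremum I would use Lemma \ref{(3.4)-KKW} and FHK$(\phi; \beta_1, \beta_2)$ to restrict attention to a ball $B(z, R)$ with $R \asymp \max\{L, \phi^{-1}(u)\}$, outside of which essentially no local time is accumulated by time $u$. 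A maximal $L$-net in $B(z, R)$ has cardinality at most $c V(R)/V(L)$ by \eqref{V-alpha}, so a union bound over pairs of net points produces the prefactor $V(\max\{L, \phi^{-1}(u)\})^2/V(L)^2$. The supremum over $t \in [0, u]$ is handled by slicing $[0, u]$ into $O(u/\phi(L))$ subintervals, using the monotonicity of $\ell(\cdot, x)$ in $t$ and the joint continuity, and absorbing the resulting polynomial factor into $c_1$.

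The main obstacle will be organizing the moment computation so that the two regimes of the $s$-integration combine into exactly the geometric mean appearing in the exponent. The near-diagonal regime $s \le \phi(L)$ must be controlled via the H\"older estimate, while the far-field regime $s \ge \phi(L)$ contributes the factor $\phi(\max\{L, \phi^{-1}(u)\})/V(\max\{L, \phi^{-1}(u)\})$ through direct FHK bounds; getting both to cooperate inside the combinatorial sum while still producing the factorial $(k!)^2$ required for the exponential-moment step is the delicate technical point.
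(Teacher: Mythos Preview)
Your moment bound and exponential-moment step are the correct inputs, but the passage from a fixed pair to the supremum has two genuine gaps. First, a single $L$-net with a union bound over pairs of net points controls only $\sup_{x',y' \text{ in net}} |\ell(u,x')-\ell(u,y')|$; for an arbitrary $x$ within distance $L$ of a net point $x'$ you still need to bound $|\ell(u,x)-\ell(u,x')|$, and joint continuity gives no quantitative control at sub-$L$ scales --- you are left needing exactly the estimate you are trying to prove. A single-scale net cannot close this loop; a full multi-scale chaining or a Garsia-type lemma is required. Second, your time-slicing produces $O(u/\phi(L))$ pieces, and this ratio is unbounded (fix $L$, let $u\to\infty$), so it cannot be absorbed into a universal $c_1$. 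The monotonicity you invoke does not help either: $\ell(t,x)-\ell(t,y)$ is a difference of increasing processes and is not itself monotone, so the supremum over $t$ does not reduce to endpoint values on each slice.

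The paper handles both points by a different organization. It rescales space by $\delta' := \max\{L,\phi^{-1}(u)\}$ and time by $\phi(\delta')$, so that in the new coordinates the spatial scale and the time horizon are both at most $1$, while the constants in FHK$(\phi;\beta_1,\beta_2)$ are preserved uniformly in $\delta'$. It then applies Garsia's lemma \cite[Lemma~A.1]{KKW17} with gauge $U_{(\delta')}(r)=\sqrt{\phi_{(\delta')}(r)/V_{(\delta')}(r)}$ to the random integral
\[
F_{\delta'} = \iint_{d_{(\delta')}(x,y)\le 1}\!\Bigl(\exp\Bigl(c_*\,\tfrac{\sup_{t\in[0,1]}|\ell^{(\delta')}(t,x)-\ell^{(\delta')}(t,y)|}{U_{(\delta')}(d_{(\delta')}(x,y))}\Bigr)-1\Bigr)\,\mu_{(\delta')}(dx)\,\mu_{(\delta')}(dy),
\]
which already carries the $\sup_t$ inside and delivers a pathwise modulus of continuity. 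The only analytic work left is $\sup_{\delta}E[F_{\delta'}]<\infty$, and that is precisely your moment computation. The prefactor $V(\max\{L,\phi^{-1}(u)\})^2/V(L)^2$ and the geometric-mean exponent then emerge automatically from the Garsia conclusion after undoing the scaling, rather than from a net count and time-slicing.
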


Given this assertion, Theorem \ref{411415} can be proved similarly to \cite[Proof of Theorems 4.11 and 4.15]{KKW17}. 
The statement is changed from \cite[Proposition 4.8]{KKW17}, accompanying a modification \eqref{U-delta-prime} in the proof below.

\begin{proof}[Outline of Proof of Proposition \ref{Pp4.8-KKW}]
The proof goes in the same manner as in the proof of \cite[Proposition 4.8]{KKW17} with minor modifications. 

For $\delta > 0$, we define the following: 
$d_{(\delta)}(x,y) := d(x,y)/\delta$. 
$\mu_{(\delta)}(A) := \mu(A)/V(\delta)$. 
Let 
$X^{(\delta)}_t := X_{\phi(\delta) t}$. 
Then, 
\[ p_{(\delta)}(t,x,y) = V(\delta) p(\phi(\delta)t, x, y).\]  
Let 
\[ V_{(\delta)}(r) := V(\delta r) / V(\delta), \ \phi_{(\delta)}(r) := \phi(\delta r) / \phi(\delta).  \]
Then, 
\[ \frac{V(\delta)}{V(\phi^{-1}(\phi(\delta)t))} = \dfrac{V(\delta)}{V\left(\delta \dfrac{\phi^{-1}(\phi(\delta)t)}{\delta}\right)} =  \dfrac{1}{V_{(\delta)}\left( \dfrac{\phi^{-1}(\phi(\delta)t)}{\delta}\right)} = \frac{1}{V_{(\delta)}(\phi_{(\delta)}^{-1}(t))}. \]
By \eqref{V-alpha} and \eqref{phi-beta}, we also have that
\[ \left(\frac{R}{r}\right)^{\alpha_1} \le \frac{V_{(\delta)} (R)}{V_{(\delta)} (r)} \le \left(\frac{R}{r}\right)^{\alpha_2}, \ 0 < r < R, \]
and, 
\[ \left(\frac{R}{r}\right)^{\beta_1} \le \frac{\phi_{(\delta)} (R)}{\phi_{(\delta)} (r)} \le \left(\frac{R}{r}\right)^{\beta_2}, \ 0 < r < R. \]

Let 
\[ \Psi_{(\delta)} (r, t) = \sup_{s > 0} \frac{r}{s} - \frac{t}{\phi_{\delta}(s)}.  \]
Then  
\[ \Psi (d(x,y), \phi(\delta)t) = \Psi_{(\delta)}(d_{(\delta)(x,y)}, t). \]

By these results and \eqref{HK-lower} and \eqref{HK-upper}, we have that for every $\delta > 0$, 
\[ \frac{c_5}{ \mu_{(\delta)}(B_{(\delta)}(x,\phi^{-1}(t)))} \le p_{(\delta)}(t,x,y), \ d_{(\delta)}(x,y) \le c_6\phi_{(\delta)}^{-1}(t), \]
and,   
\[ p_{(\delta)}(t,x,y) \le  \frac{c_7 \exp\left(-c_8 \Psi_{(\delta)}(d_{(\delta)}(x,y), t)  \right)}{ \mu_{(\delta)}(B_{(\delta)}(x,\phi_{(\delta)}^{-1}(t)))}, \ x,y \in M, t > 0, \]
where $c_5, c_6, c_7$ and $c_8$ are  the same constants as in \eqref{HK-lower} and \eqref{HK-upper}.

Let $\ell^{(\delta)}(t,x)$ be a local time for the scaled process $X^{(\delta)}$ with respect to $\mu_{(\delta)}$.  
We assume that it satisfies \cite[property (1) and (2) of Proposition 4.3]{KKW17} for the scaled setting and furthermore it is jointly continuous almost surely. 
Let $P_{(\delta)}^x$ be the probability law of $X^{(\delta)}$ starting at $x \in M$. 

Henceforth, we let $\delta^{\prime} := 1/\delta$. 
Then for $\delta = \min\{1/\phi^{-1}(u), 1/L\}$, 
\[ P^z \left(\sup_{d(x,y) \le L} \sup_{t \in [0, u]} |\ell(t,x) - \ell(t,y)| > A \right) \]
\[ \le P_{(\delta^{\prime})}^z \left(\sup_{d_{(\delta^{\prime})} (x,y) \le \delta L} \sup_{t \in [0, 1]} \left|\ell^{(\delta^{\prime})}(t,x) - \ell^{(\delta^{\prime})}(t,y)\right| > A\frac{V(\delta^{\prime})}{\phi(\delta^{\prime})}  \right). \]
See \cite[(4.20)]{KKW17} for details. 

Let 
\begin{equation}\label{U-delta-prime}
U_{(\delta^{\prime})}(r) := \sqrt{\frac{\phi_{(\delta^{\prime})}(r)}{V_{(\delta^{\prime})}(r)}}.
\end{equation}  
(This part is different from the proof of \cite[Proposition 4.8]{KKW17}.)
Let
\[ F_{\delta^{\prime}} :=  \int\int_{d_{(\delta^{\prime})}(x,y) \le 1} \left(\exp\left(c_{*} \frac{\sup_{t \in [0, 1]} \left|\ell^{(\delta^{\prime})}(t,x) - \ell^{(\delta^{\prime})}(t,y)\right| }{U_{(\delta^{\prime})}(d_{(\delta^{\prime})}(x,y))} \right) - 1\right) \mu_{(\delta^{\prime})}(dx)\mu_{(\delta^{\prime})}(dy) \]
for a sufficiently small constant $c_{*} > 0$. 

We assume that 
\begin{equation}\label{unif-delta-prime}
\sup_{\delta > 0} E_{(\delta^{\prime})}\left[F_{\delta^{\prime}}\right]< +\infty.
\end{equation} 
By Garsia's lemma stated in \cite[Lemma A.1]{KKW17}, 
there exists two positive constants $c_3$ and $c_4$ independent from $\delta$ such that  
\[ \left|\ell^{(\delta^{\prime})}(t,x) - \ell^{(\delta^{\prime})}(t,y)\right| \le c_3 U_{(\delta^{\prime})}(\delta L) \log\left(1 + c_4 \frac{F_{\delta^{\prime}}}{V_{(\delta^{\prime})}(\delta L)^2}\right) \]
holds for $\mu_{(\delta^{\prime})}$-a.e. $x, y \in M$ satisfying that $d_{(\delta^{\prime})}(x,y) \le L$ and $t \in [0,1]$. 

By using this and the joint continuity of local time, we can derive that 
\[ P_{(\delta^{\prime})}^z \left(\sup_{d_{(\delta^{\prime})} (x,y) \le \delta L} \sup_{t \in [0, 1]} \left|\ell^{(\delta^{\prime})}(t,x) - \ell^{(\delta^{\prime})}(t,y)\right| > A\frac{V(\delta^{\prime})}{\phi(\delta^{\prime})}  \right) \]
\[ \le \frac{c_5}{V_{(\delta^{\prime})}(\delta L)^2} \exp\left(-\frac{A V(\delta^{\prime})}{c_3  U_{(\delta^{\prime})}(\delta L) \phi(\delta^{\prime})}\right) \left(1 + c_4 \frac{E_{(\delta^{\prime})} [F_{\delta^{\prime}}]}{V_{(\delta^{\prime})}(\delta L)^2}\right)\]
Hence the assertion follows once we show \eqref{unif-delta-prime}. 

We can show \eqref{unif-delta-prime} in the same manner as in the proof of \cite[Proposition 4.8]{KKW17}.
In the proof, \cite[Proposition 2.11]{KKW17} is used. However, the issue raised in Remark \ref{211-KKW} does not affect any estimates in our proof. 
\end{proof}

\vspace{1\baselineskip}

\noindent\textit{Acknowledgements.} 
The author wishes to give his thanks to the referee for careful reading of the manuscript and giving many comments. 
He also wishes to give his thanks to Kazumasa Kuwada for comments on the modification of a metric measure space and to Jian Wang for the above correction of Proposition \ref{Pp4.8-KKW} in Appendix. 
This work was supported by JSPS KAKENHI Grant-in-Aid for JSPS Research Fellows (16J04213) and by the Research Institute for Mathematical Sciences, a Joint Usage/Research Center located in Kyoto University.   
 
\vspace{1\baselineskip}

\end{document}